\definecolor{mygray}{gray}{0.85}
\renewcommand{\leq}{\leqslant}
\renewcommand{\geq}{\geqslant}
\def\subsection{\@startsection{subsection}{3}%
  \z@{.5\linespacing\@plus.7\linespacing}{.3\linespacing}%
  {\bfseries\centering}}
\def\subsubsection{\@startsection{subsubsection}{3}%
  \z@{.5\linespacing\@plus.7\linespacing}{.3\linespacing}%
  {\centering}}
\def\myfnt{\ifx\protect\@typeset@protect\expandafter\footnote\else\expandafter\@gobble\fi}
\newtheorem{theorem}{Theorem}[section]
\newtheorem{corollary}[theorem]{Corollary}
\newtheorem{definition}[theorem]{Definition}
\newtheorem{lemma}[theorem]{Lemma}
\newtheorem{question}[theorem]{Question}
\newtheorem{observation}[theorem]{Observation}
\newtheorem{fact}[theorem]{Fact}
\newtheorem{remark}[theorem]{Remark}
\newtheorem{choice}[theorem]{Choice}
\newtheorem{notation}[theorem]{Notation}
\newtheorem{hypothesis}[theorem]{Hypothesis}
\newtheorem{cclaim}[theorem]{Claim}
\newtheorem{convention}[theorem]{Convention}
\newtheorem*{proviso}{Proviso}
\newcounter{claimcounter}
\numberwithin{claimcounter}{theorem}
\newcommand{\mrm}[1]{\mathrm{#1}}
\begin{document}

\begin{abstract} Relying on the techniques and ideas from our recent paper \cite{1205}, we prove several anti-classification results for various rigidity conditions in countable abelian and nilpotent groups. We prove three main theorems: (1) the rigid abelian groups are complete co-analytic in the space of countable torsion-free abelian groups ($\mrm{TFAB}_\omega$); (2) the Hopfian groups are complete co-analytic in $\mrm{TFAB}_\omega$; (3) the co-Hopfian groups are complete co-analytic in the space of countable $2$-nilpotent groups. In combination with our result from \cite[S5]{1205}, which shows that the endo-rigid abelian groups are complete co-analytic in $\mrm{TFAB}_\omega$, this shows that four major notions of rigidity from (abelian) group theory are as complex as possible as co-analytic problems. Further, the second and third theorem above solve two open questions of Thomas from \cite{thomas_complete_groups}, who asked this for the space of all countable groups. We leave open the question of whether the co-Hopfian mixed abelian groups are complete co-analytic in the space of countable abelian groups, but we reduce the problem to a concrete question on profinite groups, showing that if $G$ is a countable co-Hopfian abelian reduced group, then, for every prime number $p$, the torsion subgroup $\mrm{Tor}_p(G)$ of $G$ is finite and $G$ embeds in the profinite group $ \prod_{p \in \mathbb{P}} \mrm{Tor}_p(G)$.
\end{abstract}

\title[Anti-Classification Results for Rigid Abelian and Nilpotent Groups]{Anti-Classification Results for Rigidity Conditions in Abelian and Nilpotent Groups}


\thanks{No. 1237 on Shelah's publication list. Research of the first author was partially supported by project PRIN 2017 ``Mathematical Logic: models, sets, computability", prot. 2017NWTM8R and project PRIN 2022 ``Models, sets and classifications", prot. 2022TECZJA. Research of the second author was partially supported by the grant ``Independent Theories'' NSF-BSF, (BSF 3013005232) and by Israel Science Foundation (ISF) grants no: 1838/19 and 2320/23.}

\author{Gianluca Paolini}
\address{Department of Mathematics ``Giuseppe Peano'', University of Torino, Via Carlo Alberto 10, 10123, Italy.}

\author{Saharon Shelah}
\address{Einstein Institute of Mathematics,  The Hebrew University of Jerusalem, Israel \and Department of Mathematics,  Rutgers University, U.S.A.}

\date{\today}
\maketitle




\section{Introduction}

	The set of all groups with domain $\omega$ ($\omega$ is just another notation for $\mathbb{N}$) forms naturally a Borel space $\mathrm{Gp}_{\omega}$. This allows us to formalize classification problems in countable group theory in terms of complexity in the Borel space $\mathrm{Gp}_{\omega}$, using the usual notions of complexity for subsets of $\mathbb{R}$, such as analytic sets, co-analytic sets, etc. This makes sense not only for groups but actually for any first-order\footnote{In fact for any theory $T$ in the logic $\mathfrak{L}_{\omega_1, \omega}$, but assuming $T$ is first-order suffices here.} theory $T$ and it is the premise of what is known as {\em invariant descriptive set theory of countable structures}\footnote{For an extensive introduction to this area of research see the classical reference \cite{gao}, or for a quick introduction to the notions used in this paper see the introduction of our paper \cite{1205}.}. In this context, we recently proved in \cite{1205}\footnote{As of 04.12.2023, \cite{1205} has been accepted for publication in Ann. of Math. (2), link \underline{\href{https://annals.math.princeton.edu/articles/21268}{here}}.} that the Borel space of countable torsion-free abelian groups ($\mrm{TFAB}_\omega$) is as complex as possible in terms of classification up to isomorphism, resolving an important conjecture of Friedman and Stanley from '89 (cf. \cite{friedman_and_stanley}). In combination with fundamental results of Thomas \cite{thomas_ACTA, thomas_JAMS} on $\mrm{TFAB}_\omega$ of finite rank, this gives a complete solution to the isomorphism problem for $\mrm{TFAB}_\omega$. Most certainly, this is not the end of the story concerning interactions between descriptive set theory of countable structures and classification problems in countable abelian group theory. On one hand, we can consider classification problems up to other equivalence relations of interest (see e.g. the  work of Calderoni and Thomas \cite{calderoni} on classification of countable abelian groups up to bi-embeddability), and on the other hand we can consider classification problems for {\em algebraic properties} of (abelian) groups (where we look at properties as subsets of $\mathrm{Gp}_{\omega}$). In this respect, we proved in \cite[S5]{1205} that the endorigid (see below) groups in $\mrm{TFAB}_\omega$ form a complete co-analytic subset of $\mrm{TFAB}_\omega$, and so that the classification of these groups is as complex as possible as a co-analytic problem. In this paper we continue these investigations, relying on the elaborated tools developed in our paper \cite{1205},  focusing on other notions of rigidity which are central to (abelian) group theory. To this extent we now recall the following definitions:
	 
	\begin{definition}\label{the_rigidity_def} Let $G$ be a group.
	 \begin{enumerate}[(1)]
	 \item  We say that $G$ is Hopfian if every onto endomorphism of $G$ is $1$-to-$1$.
	 \item We say that $G$ is co-Hopfian if every $1$-to-$1$ endomorphism of $G$ is onto.
	 \item If $G$ is abelian, we say that $G$ is rigid when $\mrm{Aut}(G) = \{ \mrm{id}_G, -\mrm{id}_G \}$.
	 \item If $G$ is abelian, we say that $G$ is endorigid if $f \in \mrm{End}(G)$ implies that there is $m \in \mathbb{Z}$ s.t., for all $a \in G$, $f(a) = ma$ (i.e., $f$ is multiplication by an integer).
\end{enumerate}
\end{definition}	 

	Each of the four notions above, although arising from different contexts, are most certainly of similar nature, i.e., they are all {\em rigidity conditions}. These kind of conditions are often considered of great importance by algebraists and set-theorists (see below for some historical background). As already mentioned, in \cite[S5]{1205} we showed that the endorigid abelian groups are complete co-analytic. The aim of this paper is to show that also the three other notions of rigidity from \ref{the_rigidity_def} are complete co-analytic.  
	Further, this solves the two following problems of Thomas from \cite{thomas_complete_groups}: 
	
	\begin{question}[Thomas]\label{thomas_question}
	\begin{enumerate}[(1)]
	\item Are the Hopfian groups complete co-analytic in~$\mrm{Gp}_\omega$?
	\item Are the co-Hopfian groups complete co-analytic in $\mrm{Gp}_\omega$?
\end{enumerate}	
\end{question}

	Our solutions to the problems of Thomas assume a very strong form, i.e., we show that the Hopfian groups are already complete co-analytic in the space of torsion-free abelian groups with domain $\omega$ and that the co-Hopfian groups are already complete co-analytic in the space of $2$-nilpotent groups with domain $\omega$. Clearly, showing that such notions are complete co-analytic in the abelian or $2$-nilpotent context is a much harder task than proving it for {\em all} countable groups, as one has much less freedom for the construction \mbox{exhibiting completeness to take place.}


	\medskip

	We spend the rest of the introduction giving some context and background to our problems. We start with the (co-)Hopfian properties. The notions of  Hopfian and co-Hopfian group have been studied for a long time, under different names. In the context of abelian group theory they were first considered by Baer in \cite{baer}, where he refers to them as $Q$-groups and $S$-groups. The modern terminology arose from the work of the German mathematician H. Hopf, who showed that the defining property holds of certain two dimensional manifolds. The research on Hopfian and co-Hopfian abelian groups has recently been revived thanks to the discovered connections with the study of algebraic entropy and its dual (see \cite{entropy1, entropy2}), as e.g. groups of zero algebraic entropy are necessarily co-Hopfian (for more on the connections between these two topics see \cite{entropy3}). An easy observation shows that a torsion-free abelian group is co-Hopfian if and only if it is divisible of finite rank, hence the problem of classification of such groups naturally reduces to the torsion and mixed cases. A major progress in this line of research was made by Beaumont and Pierce in \cite{b&p} where the authors proved several general important results, in particular that if $G$ is co-Hopfian, then $\mrm{Tor}(G)$ is of size at most continuum, and further that $G$ cannot be a $p$-group of size $\aleph_0$. Since then, to the best of our knowledge, not much more has been discovered in the abelian or nilpotent context, and the main questions on co-Hopfian abelian groups moved to the uncountable case (see our paper \cite{1214} on this topic). Moving to Hopfian groups, the situation is quite different, as in this case there is no obvious obstruction for a torsion-free abelian group to be Hopfian. Furthermore, every endo-rigid abelian group is Hopfian, and several classical techniques to construct such groups are known, see e.g. \cite{sh44}. Apart from this, it seems to us that little is known on the general classification problem for torsion-free Hopfian abelian groups (a posteriori, maybe this is for a good reason, see Theorem~\ref{main_th4}). Finally, we want to mention that there are several problems on (co-)Hopfian abelian groups which remain \mbox{stubbornly open (see e.g. \cite{entropy3, hirshon} and references therein).}
	
	\medskip
	
	We continue our review of the background on our problems moving to the endo-rigid and rigid conditions (i.e., (3) and (4) from \ref{the_rigidity_def}), thus naturally assuming that $G$ is abelian. To give an idea, the canonical example of these phenomena is most certainly the group $\mathbb{Z}$, which is known to be both endo-rigid and rigid. These two notions were largely explored in the '70's and in the '80's, in particular by Fuchs, Shelah, G\"obel, Eklof, et al. In this context the interest was mostly in the uncountable case, and many constructions of such groups were exhibited over the years. The notion of rigidity received attention also in a quite different context, in fact in the fundamental work of Adams and Kechris \cite{kechris} one of the most intriguing applications of the general  ``Superrigidity Method'' was to show that the complexity of the isomorphism problem for rigid abelian groups of rank $n$ is strictly increasing with $n$ (as mentioned above, this was later improved by Thomas to all the countable torsion-free abelian groups of finite rank \cite{thomas_ACTA, thomas_JAMS}). Also in these two cases, as expected, no general classification result is known (and for a good reason).
	
\medskip

	We now introduce some notations and then conclude by stating our theorems.
	 
	\begin{notation}\label{first_notation} \begin{enumerate}[(1)]
	\item $\mathrm{AB}_\omega$ denotes the Borel space of abelian groups with domain~$\omega$.
	\item $\mathrm{TFAB}_\omega$ denotes the Borel space of torsion-free abelian groups with domain $\omega$.
	\item $\mrm{NiGp}(n)_\omega$ denotes the Borel space of nilpotent groups of class $n$ with domain~$\omega$.
	\end{enumerate}
	\end{notation}

	\begin{theorem}\label{main_th4} The rigid $G \in \mathrm{TFAB}_\omega$ are complete co-analytic in $\mathrm{TFAB}_\omega$. 
\end{theorem}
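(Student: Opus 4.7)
The plan is to exhibit a Borel reduction from the standard complete co-analytic set $\mrm{WF} := \{T \subseteq \omega^{<\omega} : T \text{ is a well-founded tree}\}$ into the rigid members of $\mrm{TFAB}_\omega$: that is, I would build a Borel map $T \mapsto G_T \in \mrm{TFAB}_\omega$ so that $G_T$ is rigid iff $T \in \mrm{WF}$. Co-analyticity of the set of rigid torsion-free abelian groups is routine from the definition, which unravels to $\forall f \in \omega^\omega\,(f \in \mrm{Aut}(G) \Rightarrow f \in \{\mrm{id}_G, -\mrm{id}_G\})$, a $\Pi^1_1$ statement.

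I would take as starting point the endo-rigid construction of \cite[S5]{1205}, which produces a Borel map $T \mapsto H_T \in \mrm{TFAB}_\omega$ with $H_T$ endo-rigid iff $T \in \mrm{WF}$. For any torsion-free abelian group $H$, the automorphism group $\mrm{Aut}(H)$ coincides with the unit group of $\mrm{End}(H)$; when $\mrm{End}(H) = \mathbb{Z} \cdot \mrm{id}_H$, these units are $\{m \cdot \mrm{id}_H : m \in \mathbb{Z},\ mH = H\}$, which reduces to $\{\pm \mrm{id}_H\}$ as soon as $pH \neq H$ for every prime $p$ (since any $|m| \geq 2$ has a prime divisor $p$, so $mH \subseteq pH \subsetneq H$). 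It is straightforward to augment the \cite[S5]{1205} construction so that each $G_T$ is $p$-reduced for all primes (for instance, by requiring that the types assigned to distinguished generators miss every prime). This delivers the forward implication ``$T \in \mrm{WF} \Rightarrow G_T$ rigid''.

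The nontrivial direction is ``$T \notin \mrm{WF} \Rightarrow G_T$ not rigid''. What \cite[S5]{1205} provides along an infinite branch $\eta$ of $T$ is only an endomorphism of $H_T$ outside $\mathbb{Z} \cdot \mrm{id}$, which a priori need not be an automorphism and need not differ from $\pm \mrm{id}$. My plan is to symmetrize the construction around each potential branch: double the building blocks indexed by $\eta$ and add relations compelling the endomorphism $f_\eta$ associated to $\eta$ to be an involution swapping the two designated copies. Then, whenever $\eta$ is an actual infinite branch, $f_\eta$ is realized as an order-two automorphism of $G_T$ which moves the swapped pair and hence differs from both $\mrm{id}_{G_T}$ and $-\mrm{id}_{G_T}$, witnessing non-rigidity.

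The main obstacle will be verifying that this doubling does not introduce unwanted automorphisms when $T$ is well-founded. Concretely, one must re-run the purification and ``black-box'' style arguments of \cite[S5]{1205} on the symmetrized construction and check that any endomorphism of $G_T$ respects the imposed purity and pairing constraints, so that the only ones which do are multiplications by integers composed with the prescribed involutions $f_\eta$; well-foundedness of $T$ then eliminates all such $f_\eta$, leaving $\mrm{Aut}(G_T) = \{\pm \mrm{id}_{G_T}\}$. This verification is the technical core of the proof, closely parallelling the endo-rigidity analysis from \cite{1205} but requiring an additional layer of combinatorial bookkeeping to track the involution structure.
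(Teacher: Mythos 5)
Your reduction strategy (Borel map from well-founded trees to rigid groups) and the co-analyticity observation are both correct, and you have accurately diagnosed the central difficulty: the endo-rigidity construction of \cite[S5]{1205} only produces, along an infinite branch, an endomorphism outside $\mathbb{Z}\cdot\mrm{id}$, which need not be onto. (A small slip: such an endomorphism automatically differs from $\pm\mrm{id}$; the only real issue is surjectivity.) Your remark that endo-rigid plus $pH \neq H$ for all $p$ forces $\mrm{Aut}(H) = \{\pm\mrm{id}\}$ is a clean shortcut for the well-founded direction, although the paper cannot use it because the group it builds is deliberately \emph{not} endo-rigid (it carries many partial automorphisms $\hat f_t$); the paper instead argues directly that every \emph{onto} endomorphism of $G_1[T]$ is $\pm\mrm{id}$ when $T$ is well-founded (Theorem~\ref{main_th_Sec1}(3)).

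The gap is in the ill-founded direction, which you yourself flag as ``the technical core''. Your proposed fix --- doubling the building blocks and imposing involution relations along potential branches --- is not what the paper does, and as stated it runs into an immediate obstruction: a doubling carried out uniformly over all nodes produces a global swap of the two copies, which would be a non-trivial automorphism of $G_T$ \emph{regardless} of whether $T$ is well-founded, destroying rigidity in the case you need it. Making the swap ``visible'' only along genuine infinite branches, and invisible otherwise, is precisely the hard part, and the sketch offers no mechanism for it. The paper takes a different route: it defines a fresh combinatorial class $\mathrm{K}^{\mrm{ri}}(T)$ (Definition~\ref{hyp_A2}) in which each $f_t$ is a partial \emph{bijection} of a finite level $X_n$ with the key normalization $\mrm{dom}(f_t) \cap \mrm{ran}(f_t) = X_{n-1}$, so that along any infinite branch $(t_n)$ the increasing union $\bigcup_n \hat f^2_{t_n}$ is automatically a bijection of the whole basis and hence a genuine free automorphism of $G_1[T]$ (no involution is needed or claimed); while in the well-founded case a delicate analysis of the equivalence relations $E_k$, the orders $<^k_{\mathfrak m}$, and the prime assignments $(p_a)$ shows any onto endomorphism must fix the basis up to sign. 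Your proposal would require redoing all of that purity/black-box analysis for a structurally different (doubled) group, and the involution constraint introduces extra symmetries that the analysis must rule out --- none of which is addressed. Until that verification is supplied, the argument is incomplete.
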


	\begin{theorem}\label{main_th4_hopfian} The Hopfian groups are complete co-analytic in $\mathrm{TFAB}_\omega$. 
\end{theorem}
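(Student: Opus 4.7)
The argument splits into the standard two halves: showing that Hopfianness is co-analytic on $\mathrm{TFAB}_\omega$, and exhibiting a Borel reduction of the complete co-analytic set $\mathrm{WF} \subseteq \mathrm{Tr}_\omega$ of well-founded trees on $\omega$ into it. The first half is routine: encoding candidate endomorphisms as elements $f \in \omega^\omega$, the conditions ``$f$ is an endomorphism of $G$'' and ``$f$ is surjective'' are Borel in $(f,G)$, while ``$f$ is not injective'' is Borel; hence ``$\forall f\,(f \in \mrm{End}(G) \wedge f\ \text{surjective} \Rightarrow f\ \text{injective})$'' is $\Pi^1_1$.

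For the reduction $T \mapsto G_T$, my starting point is the construction from \cite[\S 5]{1205} producing, for each well-founded $T$, an endo-rigid $G_T \in \mathrm{TFAB}_\omega$. Because endo-rigidity forces every endomorphism to be multiplication by some $m \in \mathbb{Z}$ and the only such maps that are surjective are $\pm\mrm{id}_G$ (both injective), endo-rigidity implies Hopfianness. So the forward direction of the reduction is free, provided the underlying construction can be preserved. The real work is in the reverse direction: I must arrange that whenever $T \notin \mathrm{WF}$, the group $G_T$ admits a surjective non-injective endomorphism, not merely some endomorphism which is not a scalar.

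The plan is to modify the construction of \cite[\S 5]{1205} so that each infinite branch $\eta \in [T]$ naturally produces a ``shift'' endomorphism $\sigma_\eta$, modelled on the left-shift of $\bigoplus_{n<\omega} \mathbb{Q}$ (which is surjective but has a nontrivial kernel). Concretely, I would attach to each node $\nu \in T$ a distinguished rank-one generator $x_\nu$ together with the divisibility/linking pattern from \cite{1205} that pins down its image under any endomorphism. To this I add relations along potential branches so that, for any $\eta \in [T]$, sending $x_{\eta \restriction (n+1)} \mapsto x_{\eta \restriction n}$ (and identity on generators outside the branch, suitably adjusted) extends to an endomorphism of $G_T$; this endomorphism sends all generators into the image, giving surjectivity, while killing the ``top'' generator $x_{\langle\rangle}$-component orthogonal to the branch, giving a nontrivial kernel. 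The remaining generators (away from $\eta$) can be handled by composing with the identity, so the map is well-defined only when the branch is fully infinite.

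The main obstacle is the usual tension in such reductions: adding enough ``shift'' flexibility to produce $\sigma_\eta$ from an infinite branch without weakening the divisibility-type bookkeeping that drives the well-founded case through the rank induction of \cite[\S 5]{1205}. My expectation is that this can be done by choosing the primes used to enforce divisibility along the branch disjoint from the primes used in the rigidity skeleton, so that the rank-induction argument from \cite{1205} — which controls where each $x_\nu$ can be sent by an endomorphism using the cotorsion/$p$-adic traces of $x_\nu$ — carries over unchanged. Verifying that the added shift relations do not introduce unintended endomorphisms in the well-founded case (and in particular that $G_T$ remains torsion-free and endo-rigid there) will require rewriting the type-analysis of \cite[\S 5]{1205} with the augmented presentation; this is where I expect most of the technical effort to lie.
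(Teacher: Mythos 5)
Your co-analyticity calculation and your high-level plan for the reduction --- build $G_T$ so that an infinite branch of $T$ yields a surjective non-injective ``shift'' endomorphism, while well-foundedness forces surjective endomorphisms to be injective --- match the paper's strategy (Section~4, Theorem~\ref{main_th_hopfian}). However, the mechanism you propose for making the shift a genuine endomorphism is the precise point where this breaks, and the paper in fact uses a rather different construction. If you keep the divisibility skeleton of \cite[\S5]{1205} (attach to each $a \in G_0^+$ its own prime $p_a$ and make $a$ infinitely $p_a$-divisible in $G_1$), then the shift $\sigma_\eta : x_{\eta\restriction(n+1)} \mapsto x_{\eta\restriction n}$ does not preserve $G_1$: the element $p_{x_{\eta\restriction(n+1)}}^{-k}\, x_{\eta\restriction(n+1)}$ lies in $G_1$, but its proposed image $p_{x_{\eta\restriction(n+1)}}^{-k}\, x_{\eta\restriction n}$ does not, exactly because the primes attached to distinct generators are distinct. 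Choosing the branch primes \emph{disjoint} from the rigidity primes does not address this --- disjointness is what causes the failure --- the obstruction is structural (the shift moves a generator across prime-towers), not an interference problem between two families of primes.

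What Section~4 actually does is not a modification of \cite[\S5]{1205}. The attached maps $f_t$ are many-to-one partial maps of $X_n$ \emph{onto} $X_{n-1}$ (Definition~\ref{hyp_A2_ho}(\ref{def_of_f_t_ho})); the induced $\hat f_t$ kills the new generators outside $\mrm{dom}(f_t)$, so the union along a branch automatically has nontrivial kernel (this is $(*_1)$(c), using \ref{hyp_A2_ho}(\ref{hyp_A2_ho_mapped_to_0})). The order $\leq_*$ on $G_0^+$ is the transitive closure of the shift relation, and the base divisibility is shared \emph{downward} along $\leq_*$: $a$ is $p_b$-divisible whenever $a \leq_* b$ (Definition~\ref{def_G1_hopfian}(1)(c)). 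Since $\hat f_t(a) \leq_* a$, the shift automatically preserves this base layer --- no disjointness is needed or helpful. On top of that a fixed-point induction (Definition~\ref{def_G1_hopfian}(1)(d)) adds exactly the divisibility needed to make $\hat f_t \restriction G_1$ both well-defined and, along an infinite branch, surjective onto $G_1$; these two facts are proved separately as $(*_1)$(a) and $(*_1)$(b). In the well-founded case the paper does not try to recover endo-rigidity; it argues directly that a surjective endomorphism cannot have kernel via a support analysis $(*_3)$--$(*_{14})$ adapted to the downward-shift structure, not to the \cite[\S5]{1205} type-analysis you cite. So the essential technical content --- interleaving the $\leq_*$-order, the onto many-to-one $f_t$'s, and the iterated divisibility closure --- is absent from your proposal and cannot be obtained by layering a new prime family on top of \cite[\S5]{1205}.
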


\begin{theorem}\label{main_th7} The co-Hopfian groups are complete co-analytic in $\mathrm{NiGp}(2)_\omega$.
\end{theorem}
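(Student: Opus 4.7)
The upper bound is routine: ``$G$ is co-Hopfian'' unpacks as ``for every $f \in G^G$, if $f$ is an endomorphism and is $1$-to-$1$ then $f$ is onto'', a $\Pi^1_1$ statement in $G$ (with $f$ ranging over the Polish space $\omega^\omega$ and all three inner conditions Borel). The real work is completeness, and the plan is to exhibit a Borel map $T \mapsto G_T$ from the space of trees $T \subseteq \omega^{<\omega}$ into $\mrm{NiGp}(2)_\omega$ such that $G_T$ is co-Hopfian iff $T$ is well-founded; this gives a Borel reduction from the canonical complete $\Pi^1_1$ set $\mrm{WF}$ of well-founded trees into the co-Hopfian $2$-nilpotent groups, as required.

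For the construction of $G_T$, I would fix a prime $p$ and build $G_T$ as a $2$-nilpotent group whose center $Z(G_T)$ contains a large divisible torsion component (a direct sum of copies of $\mathbb{Z}(p^\infty)$), because by the Beaumont--Pierce observation recalled in the introduction no countable torsion-free abelian group of infinite rank is co-Hopfian, and so the torsion responsible for co-Hopfianness must sit inside characteristic subgroups of $G_T$; the center is the natural choice. I would then index a family of generators $\{x_\eta : \eta \in T\}$ by the nodes of $T$, place them in $G_T/Z(G_T)$, and define the commutator bracket so that $[x_\eta,x_\nu]$ lands in prescribed Pr\"ufer summands of $Z(G_T)$ in a way that rigidly encodes the tree order on $T$. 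The bracket should be engineered so that any infinite branch $b \in [T]$ gives rise to a well-defined $2$-nilpotent endomorphism $\varphi_b$ acting as a shift $x_{b \restriction n} \mapsto x_{b \restriction (n+1)}$ along the branch (with a matching lift on the torsion center), which is injective by non-degeneracy but misses the image of the root generator, hence is not onto; this shows $G_T$ is not co-Hopfian when $T$ is ill-founded.

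Conversely, assuming $T$ is well-founded, I would prove $G_T$ is co-Hopfian by induction on the rank $\mrm{rk}(T)$, with the construction set up so that $G_T$ decomposes coherently along the subtrees $T_\eta = \{\nu : \eta^\frown \nu \in T\}$. Any $1$-to-$1$ endomorphism $\varphi$ of $G_T$ should, by the characteristic nature of the divisible torsion part of $Z(G_T)$ and the rigidity of the commutator pairing, preserve the skeleton and move the $x_\eta$ in a manner consistent with the tree order; the rank induction then forces $\varphi$ to be onto. The main obstacle is designing the commutator pairing so that the only source of $1$-to-$1$ non-surjective endomorphisms is an infinite branch of $T$, while simultaneously (i) suppressing spurious endomorphisms coming from the extra freedom of the $2$-nilpotent (as opposed to abelian) setting, and (ii) preserving co-Hopfianness of $G_T$ when $T$ is well-founded. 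This is where the machinery from \cite{1205} enters, adapted from the torsion-free abelian framework used in Theorems~\ref{main_th4} and~\ref{main_th4_hopfian} to the mixed torsion/nilpotent setting that co-Hopfianness in $\mrm{NiGp}(2)_\omega$ demands.
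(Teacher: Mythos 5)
Your overall architecture is right --- a $2$-nilpotent group whose center carries Pr\"ufer $p$-torsion, commutators encoding the tree structure, and an infinite branch inducing a shift that is injective but misses a ``root'' generator. That is essentially what the paper does: it takes the torsion-free abelian group $G_1[\mathfrak{m}]$ from \cite[Section~5]{1205}, lifts it to a $2$-nilpotent group $H_1$ with generators $(\tfrac{1}{n!},x)$ for $x \in X$, declares $[(\tfrac{1}{n!},x),(\tfrac{1}{n!},y)] = z_{(x,y,a_{(x,y,n)})}$ where the $z$'s generate copies of $\mathbb{Z}(p^\infty)$ in the center, and recovers $G_1$ as $H_1/\mrm{Cent}(H_1)$. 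An infinite branch $(t_n)_n$ gives $\bigcup_n g^1_{t_n}$ as a non-surjective self-embedding, which matches your picture.

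The genuine gap is in the well-founded direction. Your plan is an induction on $\mrm{rk}(T)$ via a supposed decomposition of $G_T$ along the subtrees $T_\eta$; but the construction in \cite{1205} (and hence here) is not built from subtree pieces and admits no such decomposition, so that induction has no foothold. More importantly, you never confront the central difficulty: even when $T$ is well-founded, the abelianization $G_1[\mathfrak{m}]$ is a torsion-free abelian group of infinite rank and therefore is \emph{not} co-Hopfian --- multiplication by any $m \geq 2$ is an injective non-surjective endomorphism. So ``rigidity of the commutator pairing forces $\varphi$ to preserve the skeleton'' cannot be the whole story; you must rule out that a $1$-to-$1$ endomorphism $\pi$ of $H_1$ induces multiplication by some non-unit $m$ on the abelianization. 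The paper's mechanism for this is the crux, and it is absent from your proposal: by \cite{1205}, the induced $\hat\pi$ on $H_1/\mrm{Cent}(H_1) \cong G_1$ must be $\times m$ for some integer $m$; then the $2$-nilpotent identity $[g^n,h^m] = [g,h]^{nm}$ gives $\pi([(1,x),(1,y)]) = [(1,x),(1,y)]^{m^2} = z_{(x,y,a)}^{m^2}$ with $a = a_{(x,y,1)}$ of order exactly $p$, so if $p \mid m$ the right-hand side is trivial while the commutator itself is not, contradicting injectivity of $\pi$; hence $m \in \{1,-1\}$. This is precisely why the level-$n$ divisibility $(\tfrac{1}{n!},x)$ must be tied to an order-$p^n$ central element $a_{(x,y,n)}$ --- a design constraint your sketch does not pin down. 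Once $m = \pm 1$, surjectivity follows directly (commutators lie in $\mrm{ran}(\pi)$ because the abelianization map is surjective, and commutators generate the center), with no rank induction required.
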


	Unfortunately, we were unable to determine if we can improve Theorem~\ref{main_th7} to the abelian case (necessarily mixed, because of what was mentioned above). In this respect, though, we were able to obtain some important additional information:
	
	\begin{theorem}\label{main_th6} If $G \in \mrm{AB}_\omega$ is co-Hopfian and reduced, then for every prime $p$ we have that $\mrm{Tor}_p(G)$ is finite and $G$ embeds in the profinite group $ \prod_{p \in \mathbb{P}} \mrm{Tor}_p(G)$.
\end{theorem}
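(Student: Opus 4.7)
My plan is to prove the theorem in two steps: first, establish that $T_p := \mathrm{Tor}_p(G)$ is finite for every prime $p$; second, use the direct-summand decompositions this provides to construct an embedding of $G$ into $\prod_p T_p$.

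For the finiteness I would argue by contradiction. Suppose $T_p$ is infinite; since $G$ is reduced, $T_p$ is an infinite reduced $p$-group, and my aim is to produce an injective but non-surjective endomorphism of $G$, contradicting co-Hopfianness. The argument naturally splits on whether $T_p$ is bounded. If $T_p$ is bounded, then $T_p$ is pure in $G$ (as a primary torsion component), and a bounded pure subgroup is a direct summand, so $G = T_p \oplus H$. By Pr\"ufer's theorem an infinite bounded reduced $p$-group contains a direct summand isomorphic to $\bigoplus_{\omega} \mathbb{Z}/p^k$ for some $k$, which carries a shift-type injective non-surjective endomorphism; extending by the identity on $H$ then provides the desired contradiction. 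If $T_p$ is unbounded, the plan is to reduce to the bounded case by locating an infinite bounded pure subgroup of $T_p$ that is a direct summand of $G$: if some homogeneous component $B_n$ of a basic subgroup of $T_p$ has infinite rank, it does the job (being bounded and pure in $T_p$, hence pure in $G$, hence a summand of $G$). The residual case, where every homogeneous component of every basic subgroup is finite while $T_p$ still has unbounded Ulm-height spectrum, requires a more delicate direct construction sensitive to how $T_p$ sits inside $G$, and this is where I expect the main technical obstacle.

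Once each $T_p$ is known to be finite, boundedness and purity give decompositions $G = T_p \oplus H_p$ with projections $\pi_p: G \to T_p$, and I define $\Phi: G \to \prod_{p \in \mathbb{P}} T_p$ by $\Phi(g) := (\pi_p(g))_p$. The codomain is profinite, so it suffices to show $K := \ker \Phi = \bigcap_p H_p$ is zero. Since $\pi_q$ restricts to the identity on $T_q$, any torsion element of $K$ must vanish, so $K$ is torsion-free. To finish I plan to show $K$ is divisible, whence $K = 0$ by reducedness of $G$. The key point is that each $H_p$ is itself co-Hopfian (as a direct summand of the co-Hopfian group $G$) and has no $p$-torsion by construction, so multiplication by $p$ is an injective and hence surjective endomorphism of $H_p$, giving $H_p = p H_p$. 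For $k \in K$, write $k = p k_p$ with $k_p \in H_p$; for a prime $q \neq p$, decompose $k_p = t + h$ along $G = T_q \oplus H_q$, and the equation $k = p t + p h$ combined with $k, p h \in H_q$ forces $p t \in T_q \cap H_q = 0$. Since $t \in T_q$ has $q$-power order and $p \neq q$, this gives $t = 0$, so $k_p \in H_q$ for every $q$, hence $k_p \in K$. Therefore $K = p K$ for every prime $p$, so $K$ is divisible and thus trivial, and $\Phi$ is the sought embedding into the profinite group $\prod_p T_p$.
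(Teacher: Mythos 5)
Your embedding argument (assuming finiteness of each $T_p$) is correct and takes a mildly different route from the paper's. The paper produces, for each $a$, the unique $c_{(a,p)} \in T_p$ with $a - c_{(a,p)}$ divisible by all $p$-powers, using a cited lemma ([1214, Claim~3.6]) that $G/T_p$ is $p$-divisible when $G$ is co-Hopfian with $T_p$ bounded; it then checks injectivity of $a \mapsto (c_{(a,p)})_p$ by building a Cauchy-type chain $b_n$ and invoking reducedness. You instead use the direct-sum decompositions $G = T_p \oplus H_p$ (available once each $T_p$ is finite), observe that $H_p$ is a summand of a co-Hopfian group and hence co-Hopfian, and since $H_p$ has no $p$-torsion, multiplication by $p$ on $H_p$ is injective, hence onto, so $H_p = pH_p$; the projections $\pi_p$ then coincide with the paper's $c_{(a,p)}$. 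Your injectivity argument --- showing $\ker\Phi = \bigcap_p H_p$ is torsion-free and divisible, hence trivial --- is a clean packaging of the same divisibility idea, and arguably more self-contained than the paper's.

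The genuine gap is in the finiteness of $T_p$, and you have correctly diagnosed it. Your bounded case is fine: an infinite bounded $T_p$ is a summand of $G$ and contains, by Pr\"ufer, an infinite homocyclic summand $\bigoplus_\omega \mathbb{Z}/p^k$ carrying a shift, so $G$ would not be co-Hopfian. But the unbounded case does not in general reduce to this. If every homogeneous component of the basic subgroup of $T_p$ is finite --- e.g.\ the basic subgroup is $\bigoplus_n \mathbb{Z}/p^n$ --- there is no infinite bounded pure summand to exploit, and this case definitely occurs. The paper resolves it by a citation to [1214, Claim~3.2], which, when $T_p$ is infinite and reduced, directly constructs a decomposition $\bigoplus_n K_n$ of non-trivial finite $p$-groups sitting purely in $G$, together with an endomorphism of $G$ landing in $\bigoplus K_n$ and witnessing non-co-Hopfianness via [1214, Obs.~2.17]. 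That construction is indeed sensitive to how $T_p$ sits inside $G$ (Ulm heights and the interaction with the torsion-free part), so your instinct that this is the hard technical step is right, but as written your proposal leaves it open.
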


	Notice that because of \cite[Claim~2.15]{1214} the assumption ``$G$ is reduced'' in \ref{main_th6} is without loss of generality. As mentioned, we leave the following open problem:

	\begin{question} Are the co-Hopfian groups complete co-analytic in $\mrm{AB}_\omega$?
\end{question} 

	After this paper was finished we discovered that a solution to Question~\ref{thomas_question}(1) was given in the recent preprint \cite{pinsky}, that construction uses generalized wreath products and so it does not take place in the context of abelian groups. Finally, we want to mention that in \cite{decomposability} it is proved that the indecomposable abelian groups are complete co-analytic in $\mathrm{TFAB}_\omega$, a result most certainly affine to our investigations.

\section{Notations and preliminaries}

	For the readers of various backgrounds we try to make the paper self-contained.

\subsection{General notations}

	\begin{definition}
	\begin{enumerate}[(1)]
	\item Given a set $X$ we write $Y \subseteq_\omega X$ for $Y \subseteq X$ and $|Y| < \aleph_0$.
	\item Given a set $X$ and $\bar{x}, \bar{y} \in X^{< \omega}$ we write $\bar{y} \triangleleft \bar{x}$ to mean that $\mrm{lg}(\bar{y}) < \mrm{lg}(\bar{x})$ and $\bar{x} \restriction \mrm{lg}(\bar{y}) = \bar{y}$, where $\bar{x}$ is naturally considered as a function ${\mrm{lg}(\bar{x})} \rightarrow X$.
	\item Given a partial function $f: M \rightarrow M$, we denote by $\mrm{dom}(f)$ and $\mrm{ran}(f)$ the domain and the range of $f$, respectively.
	\item For $\bar{x} \in A^n$ we write $\bar{x} \subseteq A$ to mean that $\mrm{ran}(\bar{x}) \subseteq A$, where, as usual, $\bar{x}$ is considered as a function $\{0, ..., n-1 \} \rightarrow A$.
	\item Given a set $X$ and $0 < n < \omega$ we write $\mrm{seq}_n(X)$ to denote the set of injective sequences (i.e., with no repetitions) of length $n$ of elements from $X$. 
	\end{enumerate}
\end{definition}

	\begin{definition} Let $(X, <)$ be a partial order and $(x_n : n < \omega) \in X^\omega$.
	\begin{enumerate}[(1)]
	\item We say that $(x_n : n < \omega)$ is strictly increasing when we have:
	 $$n < m < \omega \; \Rightarrow \; x_n < x_m.$$
	\item We say that $(x_n : n < \omega)$ is non-decreasing when we have:
	 $$n \leq m < \omega \; \Rightarrow \; x_n \leq x_m.$$
	\end{enumerate}
\end{definition}

\subsection{Trees}



	\begin{definition}\label{def_trees} Let $(T, <_T)$ be a strict partial order.
	\begin{enumerate}[(1)]
	\item $(T, <_T)$ is a {\em tree} when, for all $t \in T$, $\{s \in T : s <_T t\}$ is well-ordered by the relation $<_T$. Notice that according to our definition a tree $(T, <_T)$ might have more than one root, i.e. more than one $<_T$-minimal element. We say that the tree $(T, <_T)$ is rooted when it has only one $<_T$-minimal element (its root).
	\item A {\em branch} of the tree $(T, <_T)$ is a maximal chain of the partial order $(T, <_T)$.
	\item A tree $(T, <_T)$ is said to be {\em well-founded} if it has only finite branches.
	\item Given a tree $(T, <_T)$ and $t \in T$ we let the level of $t$ in $(T, <_T)$, denoted as $\mrm{lev}(t)$, to be the order type of $\{s \in T : s <_T t\}$ (recall item (1)).
	\end{enumerate}
\end{definition}

	Concerning Def.~\ref{def_trees}(4), we will only consider trees $(T, <_T)$ such that, for every $t \in T$, $\{s \in T : s <_T t\}$ is finite, so for us $\mrm{lev}(t)$ will always be a natural number.
	
	\begin{notation} We denote by $\mathbb{P}$ the set of primes.
\end{notation}

\subsection{Groups}

\begin{notation} Let $G$ and $H$ be groups.
\begin{enumerate}[(1)]
	\item $H \leq G$ means that $H$ is a subgroup of $G$.
	\item We let $G^+ = G \setminus \{ e_G \}$, where $e_G$ is the neutral element of $G$.
	\item If $G$ is abelian we might denote the neutral element $e_G$ simply as $0_G = 0$.
\end{enumerate}
\end{notation}

	\begin{definition}\label{def_pure} Let $H \leq G$ be groups, we say that $H$ is pure in $G$, denoted by $H \leq_* G$, when if $k \in H$, $n < \omega$ and (in additive notation) $G \models ng = k$, then there is $h \in H$ such that $H \models nh = g$.
\end{definition}

	\begin{observation}\label{obs_pure_TFAB} If $H \leq_* G \in \mrm{TFAB}$, $k \in H$ and $0 < n < \omega$, then:
$$G \models ng = k \Rightarrow g \in H.$$
\end{observation}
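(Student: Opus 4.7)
The plan is to invoke the purity of $H$ in $G$ to produce a solution to $nx = k$ inside $H$, and then use torsion-freeness of $G$ to identify this solution with $g$.

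First, I would apply Definition~\ref{def_pure} to the element $k \in H$ and the witness $g \in G$ with $G \models ng = k$. Purity yields some $h \in H$ such that $nh = k$ (read in the standard way, i.e.\ the equation $nx = k$ already has a solution inside $H$).

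Next, working inside $G$, I would combine the two equalities $nh = k$ and $ng = k$ to obtain
\[
n(g - h) = ng - nh = k - k = 0_G.
\]
Since $G \in \mathrm{TFAB}$ and $n \neq 0$, torsion-freeness forces $g - h = 0_G$, hence $g = h$. As $h \in H$, this gives $g \in H$, which is the desired conclusion.

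The only conceptual step is the first one (extracting a witness in $H$ to the divisibility relation satisfied by $k$), and there is no obstacle of any real depth: once the witness $h$ is in hand, the equation $n(g-h) = 0$ plus absence of $n$-torsion in $G$ finishes the argument in a single line. No further machinery beyond Definition~\ref{def_pure} and the defining property of $\mathrm{TFAB}$ is needed.
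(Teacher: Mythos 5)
Your proof is correct and is the standard argument; the paper states the observation without proof, and what you wrote is exactly the intended justification (note also that Definition~\ref{def_pure} has a typo — it reads ``$H \models nh = g$'' where it should read ``$H \models nh = k$'' — and you correctly read it in the standard way).
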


	\begin{observation}\label{generalG1p_remark} Let $G \in \mathrm{TFAB}$ and let:
	$$G_{(1, p)} = \{ a \in G_1 : a \text{ is divisible by $p^m$, for every $0 < m < \omega$}\},$$
then $G_{(1, p)}$ is a pure subgroup of $G_1$.
\end{observation}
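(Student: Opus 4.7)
The plan is to establish the conclusion in two steps: first verify that $G_{(1,p)}$ is a subgroup of $G_1$, and then verify the purity condition of Definition~\ref{def_pure}. Throughout I use that $G_1$, being a subgroup of $G \in \mrm{TFAB}$, is itself torsion-free, so Observation~\ref{obs_pure_TFAB} (and, more basically, cancellation) is available whenever I need to deduce an identity $x = y$ from $nx = ny$ with $n > 0$.

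For the subgroup step I proceed directly: $0 \in G_{(1,p)}$ trivially, and if $a, b \in G_{(1,p)}$ then, given $m < \omega$, I can write $a = p^m a_m$ and $b = p^m b_m$ with $a_m, b_m \in G_1$, so that $a - b = p^m(a_m - b_m)$ is $p^m$-divisible in $G_1$. Since $m$ was arbitrary, $a - b \in G_{(1,p)}$, so $G_{(1,p)} \leq G_1$.

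For purity, I take $k \in G_{(1,p)}$, $g \in G_1$ and $0 < n < \omega$ with $n g = k$, and I show $g \in G_{(1,p)}$; by torsion-freeness of $G_1$ this immediately upgrades to the witness required by Definition~\ref{def_pure} (the equation $n x = k$ already has $g$ as its unique solution, and I will have shown $g$ lies in $G_{(1,p)}$). Factor $n = p^{r} n'$ with $\gcd(n',p) = 1$, and fix an arbitrary $m < \omega$. Since $k \in G_{(1,p)}$, pick $k' \in G_1$ with $p^{m+r} k' = k$. Then
$$p^{r} n' g \;=\; n g \;=\; k \;=\; p^{m+r} k',$$
and cancelling $p^{r}$ using torsion-freeness of $G_1$ gives $n' g = p^m k'$. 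Now use Bézout to pick integers $s, t$ with $s n' + t p^m = 1$, whence
$$g \;=\; s n' g + t p^m g \;=\; s\, p^m k' + t\, p^m g \;=\; p^m (s k' + t g),$$
exhibiting $g$ as $p^m$-divisible in $G_1$. Since $m$ was arbitrary, $g \in G_{(1,p)}$.

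The only step that requires a nontrivial idea is isolating the $p$-part of $n$: the hypothesis on $k$ gives pure $p$-power divisibility, whereas the coefficient $n$ may have a factor $n'$ coprime to $p$, so one cannot read off $p^m \mid g$ directly. The Bézout trick in the last display is the place where this asymmetry is resolved; everything else is formal manipulation in a torsion-free abelian group.
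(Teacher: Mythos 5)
Your proof is correct. The paper itself offers no argument for this observation, dismissing it as ``well-known,'' so there is nothing to compare your approach against; your argument is the standard one. One small remark: Definition~\ref{def_pure} as printed in the paper reads ``$H \models nh = g$'' where it surely means ``$H \models nh = k$''; you have implicitly corrected this by aiming directly at the stronger conclusion $g \in G_{(1,p)}$, which is the right thing to do in the torsion-free setting (cf.\ Observation~\ref{obs_pure_TFAB}). The Bézout step is exactly where the content is, and you have isolated it cleanly.
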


	\begin{proof} This is well-known.
\end{proof}

	\begin{definition}\label{def_basis} Let $G \in \mrm{TFAB}$, we say that $X \subseteq G$ is a basis of $G$ when $X$ is a maximal independent subset of $G$ (in the sense of \cite[Section~16]{fuch_vol1}).
\end{definition}

	\begin{definition}\label{def_free_auto} Let $A \in \mathrm{TFAB}$ and $\pi \in \mrm{Aut}(A)$. We say that $\pi$ is free if there exists a basis (cf. Def.~\ref{def_basis}) $X$ of $A$ such that $\pi$ induces a permutation of $X$.
\end{definition}

	\begin{fact}[{\cite[Lemma~1.13]{nilpotentn_book}}]\label{2nilpotent_fact} Let $G$ be nilpotent of class $2$. For every $g, h \in G$ and $m, n \in \mathbb{Z}$ we have that the following equation is true:
	$$[g^n, h^m] = [g, h]^{nm}.$$
\end{fact}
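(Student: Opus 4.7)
The plan is to exploit the defining property of class-$2$ nilpotency, namely that $[G,G] \leq Z(G)$; equivalently, every commutator of $G$ commutes with every element of $G$. From this, the identity reduces to showing that the commutator bracket is bilinear on a class-$2$ group, after which the claim follows by a short induction.

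First I would recall the universal commutator identities valid in every group:
\[
[xy, z] \;=\; [x,z]^{y}\, [y,z], \qquad [x, yz] \;=\; [x,z]\, [x,y]^{z},
\]
where $a^{b} := b^{-1} a b$. Since $[x,z], [x,y] \in [G,G] \leq Z(G)$, the conjugations $[x,z]^{y}$ and $[x,y]^{z}$ are trivial, so these identities collapse to
\[
[xy, z] \;=\; [x,z]\,[y,z], \qquad [x, yz] \;=\; [x,y]\,[x,z].
\]
Thus the commutator map $G \times G \to [G,G]$ is a bihomomorphism.

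Next I would fix $h \in G$ and prove $[g^n, h] = [g,h]^n$ for all $n \in \mathbb{Z}$ by induction on $|n|$. The case $n = 0$ is $[e_G, h] = e_G = [g,h]^0$. For the successor step with $n \geq 0$, the left bilinearity gives $[g^{n+1}, h] = [g \cdot g^n, h] = [g,h] \cdot [g^n, h] = [g,h]^{n+1}$. For $n = -1$, applying left bilinearity to $e_G = [g^{-1} g, h]$ yields $[g^{-1}, h] = [g,h]^{-1}$, and then the same induction (or replacing $g$ by $g^{-1}$) handles all negative exponents.

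Finally, fixing this $n$ and applying the symmetric argument in the second slot via right bilinearity, one obtains $[g^n, h^m] = [g^n, h]^m$ for every $m \in \mathbb{Z}$, and combining the two gives
\[
[g^n, h^m] \;=\; [g^n, h]^{m} \;=\; \bigl([g,h]^{n}\bigr)^{m} \;=\; [g,h]^{nm},
\]
as required. There is no real obstacle here: the only substantive input is class-$2$ nilpotency, which is used exactly to pass from the general conjugation-twisted commutator identities to honest bilinearity; everything after that is a two-line induction.
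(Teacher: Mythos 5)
Your proof is correct; this is the standard argument. Note that the paper itself gives no proof for this fact --- it simply cites \cite[Lemma~1.13]{nilpotentn_book} --- so there is nothing in the paper to compare against. Your derivation (reduce the universal commutator identities to bilinearity of the bracket using $[G,G]\leq Z(G)$, then induct on the exponent in each slot) is precisely the textbook proof one would find in the cited reference, and there is no gap.
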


\section{The rigidity problem for countable $\mrm{TFAB}$}\label{sec_cohop}

	\begin{hypothesis}\label{hyp_A1} Throughout this section the following hypothesis stands:
	\begin{enumerate}[(1)]
	\item\label{omega_levels} $T = (T, <_T)$ is a countable rooted tree with $\omega$ levels;
	\item $T = \bigcup_{n < \omega} T_n$, $T_n \subsetneq T_{n+1}$, and $t \in T_n$ implies that $\mathrm{lev}(t) \leq n$;
	\item\label{hyp_A1_item3} $T_0 = \{t_0 \}$, where $t_0$ is the root of $T$, and, for $n < \omega$, $T_n$ is finite;
	\item $T_{<n} = \bigcup_{\ell < n} T_\ell$ (so $T_{<(n+1)} = T_n$ and $T_{<0} = \emptyset$);
	\item if $s <_T t \in T_n$, then $s \in T_{< n}$;
	\item for $t \in T$, we let $\mathbf{n}(t)$ be the unique $0 \leq n < \omega$ such that $t \in T_n \setminus \bigcup_{\ell < n} T_{\ell}$.
	\end{enumerate}
\end{hypothesis}

	\begin{definition}\label{hyp_A2} Let $\mathrm{K}^{\mrm{ri}}(T)$ be the class of objects:
	$$\mathfrak{m}(T) = \mathfrak{m} = (X^T_n, \bar{f}^T: n < \omega) = (X_n, \bar{f} : n < \omega)$$
satisfying the following requirements:
	\begin{enumerate}[(a)]
	\item\label{X0_not_empty} $X_0 \neq \emptyset$, $X_n \subseteq \omega$ is finite and strictly increasing with $n$, and $X_{< n} = \bigcup_{\ell < n} X_\ell$;
	\item we let $X = \bigcup_{n < \omega} X_n$;
	\item $\bar{f} = (f_t : t \in T)$;
	\item\label{def_of_f_t} if $0 < n < \omega$ and $t \in T_n \setminus T_{< n}$, then $f_t$ is a one-to-one function such that $X_{n-1} = \mrm{dom}(f_t) \cap \mrm{ran}(f_t)$ and $\mrm{dom}(f_t) \cup \mrm{ran}(f_t) \subseteq X_n$;
	\item for $i \in \{1, -1\}$ and $t \in T$, we let $f^i_t = f_t$, if $i = 1$, and $f^i_t = f^{-1}_t$, if $i = -1$;
	\item\label{item_fs_sub_ft} if $s <_T t \in T_n$, then $f_s \subseteq f_t$; 
	\item\label{item_no_fix_point} 
	\begin{enumerate}[$(\alpha)$]
	\item if $t$ is the root of $T$, then $f_t$ is the empty function;
	\end{enumerate}
	\begin{enumerate}[$(\beta)$]
	\item if $t \in T$ is not the root of $T$, $s <_T t$ is $<_T$-maximal below $t$, $t \in T_n \setminus T_{<n}$, $f_t(x) = y$ and $x \notin \mrm{dom}(f_s)$, then either of the following two happens:
	$$(x \in X_{n-1} \setminus \mrm{dom}(f_s) \wedge y \in X_{n} \setminus X_{n-1}) \text{ or } (y \in X_{n-1} \setminus \mrm{ran}(f_s) \wedge x \in X_{n} \setminus X_{n-1});$$
\end{enumerate}	
	\item\label{item_meet_tree} $(\mrm{ran}(f^i_t) \setminus X_{n-1} : t \in T_n \setminus T_{< n}, i \in \{1, -1 \})$ is a sequence of \mbox{pairwise disjoint sets;}
	\item\label{itemh_co} $X_{n+1} \supsetneq X_n \cup \{\mrm{ran}(f^i_t) : t \in T_{n+1} \setminus T_n, i \in \{1, -1 \}\}$;
	\item\label{graph} we define the graph $(\mrm{seq}_k(X), R_k)$ as follows:
	$$\bar{x}  R_k \bar{y} \text{ iff } \exists t \in T, \exists i \in \{1, -1\}  \text{ such that } f^i_t(\bar{x}) = \bar{y};$$
	notice that $\bar{x}  R_k \bar{y}$ implies $\bar{y}  R_k \bar{x}$ and that $\neg (\bar{x} R_k \bar{x})$ is by (\ref{item_no_fix_point})$(\beta)$.
	\item given $\bar{x}, \bar{y} \in \mrm{seq}_k(X)$ we let $\bar{x} E_k \bar{y}$ when $\bar{x}, \bar{y}$ are in the same connected component in the graph $(\mrm{seq}_k(X), R_k)$;
	\item\label{A2_no_cycles} the graph $(\mrm{seq}_k(X), R_k)$ has no cycles.
\end{enumerate}
\end{definition}

\begin{convention}\label{the_m_convention} $\mathfrak{m} = (X_n, \bar{f}_n : n < \omega) \in \mathrm{K}^{\mrm{ri}}(T)$ (cf. Definition~\ref{hyp_A2}).
\end{convention}

	\begin{notation}\label{notation_<_m} Let $\mathfrak{m} \in \mathrm{K}^{\mrm{ri}}(T)$.
	\begin{enumerate}[(1)]
	\item\label{notation_n(x)} For $x \in X$, we let $\mathbf{n}(x)$ be the unique $n < \omega$ such that $x \in X_n \setminus X_{< n}$.
	\item For $k \geq 1$ and $\bar{x} = (x_0, ..., x_{k-1}) \in \mrm{seq}_k(X)$, let $\mathbf{n}(\bar{x}) = \mrm{max}\{\mathbf{n}(x_\ell) : \ell < k\}$.
	\item For $\bar{x} \in \mrm{seq}_k(X)$ we let:
	$$\mrm{suc}(\bar{x}) = \{\bar{y} \in \mrm{seq}_k(X): \mathbf{n}(\bar{x}) < \mathbf{n}(\bar{y}) \text{ and } \exists t \in T, \, \exists i \in \{1, -1\} \text{ s.t. } f^i_t(\bar{x}) = \bar{y}\}.$$
	\item\label{<_mathfrak} $<^k_{\mathfrak{m}}$ is the transitive closure of the relation $\bar{y} \in \mrm{suc}(\bar{x})$ on $\mrm{seq}_k(X)$.
	\item\label{def_reasonable_item} We say that $\bar{x} \in \mrm{seq}_k(X)$ is reasonable when the following happens:
	$$n_1 < n_2, \, x_{i(1)} \in X_{n(1)} \setminus  X_{< n(1)}, \, x_{i(2)} \in X_{n(2)} \setminus  X_{< n(2)} \Rightarrow i(1) \leq i(2).$$
	\item\label{k=1} If $k = 1$, we may simply write $<_{\mathfrak{m}}$ instead of $<^1_{\mathfrak{m}}$. Also, we may simply write $x <_{\mathfrak{m}} y$ instead of $(x) <_{\mathfrak{m}} (y)$.
	\item\label{f_eta_pre} Assume $n < \omega$, $\bar{t} \in T^n$ and $\eta \in \{1, -1\}^n$, then we let:
	$$f_{(\bar{t}, \eta)} = (\cdots \circ f^{\eta(\ell)}_{t_\ell} \circ \cdots)_{\ell < n}.$$
	\item The pair $(\bar{t}, \eta)$, where $\eta \in \{1, -1\}^n$ and $\bar{t} \in T^n$:
	\begin{enumerate}[(a)]
	\item is called {\em reduced} when $\ell < n-1$ and $t_\ell = t_{\ell+1}$ implies that $\eta(\ell) = \eta(\ell+1)$;
	\item is called {\em reduced} for $x \in X$ when the sequence $(f_{(\bar{t} \restriction \ell, \eta \restriction \ell)}(x) : \ell \leq n)$ is well-defined and with no repetitions (so $(\bar{t}, \eta)$ is reduced, but the converse may fail, e.g. we might have that $t_1 < t_2$, $x \in \mrm{dom}(f_{t_1})$ and $f^{-1}_{t_2} \circ f_{t_1}(x) = x$);
	\item is called {\em strongly reduced} for $x$ when it is reduced for $x$ and letting, for  $\ell \leq n$, $x_\ell = f_{(\bar{t} \restriction \ell, \eta \restriction \ell)}(x)$ we have that for all $s <_T t_\ell$, $x_\ell \notin \mrm{dom}(f^{\eta(\ell)}_s)$.
	\end{enumerate}
	\end{enumerate}
\end{notation}

	\begin{observation}\label{observation_new_elements_are_moved} 
	In the context of Definition~\ref{hyp_A2}, we have: let $i \in \{1, -1\}$ and $t \in T$, then for every $x \in \mrm{dom}(f_t)$ we have that $x \neq f^{i}_t(x)$, further there is a unique $0 < n < \omega$ such that $x \in X_{n-1} \setminus X_{< n-1}$ and $f^i_t(x) \in X_n \setminus X_{n-1}$ or $f^i_t(x) \in X_{<n-1}$.
\end{observation}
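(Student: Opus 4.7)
The plan is to trace the pair $(a, b) \in f_t$ associated with $x$ and $f^i_t(x)$ back to the first $<_T$-ancestor of $t$ that introduces it, and read off the required information from clauses (d) and (g)($\beta$) of Definition~\ref{hyp_A2}. Put $(a, b) := (x, f^i_t(x))$ if $i = 1$ and $(a, b) := (f^i_t(x), x)$ if $i = -1$, so that in either case $(a, b) \in f_t$ with $f_t(a) = b$. Because $\mrm{lev}(t) < \omega$ the chain of $\leq_T$-predecessors of $t$ is finite, and clause (f) gives $f_s \subseteq f_t$ whenever $s \leq_T t$; hence $\{u \leq_T t : (a, b) \in f_u\}$ is upward-closed and nonempty, so it has a $<_T$-minimum $u$. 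If $u$ were the root then $f_u = \emptyset$ by (g)($\alpha$), contradicting $(a, b) \in f_u$; so $u$ has a unique $<_T$-maximal predecessor $u'$.

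By minimality of $u$ we have $(a, b) \notin f_{u'}$, and since $f_{u'} \subseteq f_u$ is a function this forces $a \notin \mrm{dom}(f_{u'})$. Setting $m := \mathbf{n}(u) > 0$ and applying clause (g)($\beta$) to $u$ with $s = u'$ and $n = m$ yields one of the alternatives: (i) $a \in X_{m-1} \setminus \mrm{dom}(f_{u'})$ and $b \in X_m \setminus X_{m-1}$, or (ii) $b \in X_{m-1} \setminus \mrm{ran}(f_{u'})$ and $a \in X_m \setminus X_{m-1}$. In both alternatives exactly one of $a, b$ lies in $X_m \setminus X_{m-1}$ and the other in $X_{m-1}$; since those sets are disjoint, $a \neq b$, so $x \neq f^i_t(x)$, which is the first assertion.

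For the $n$-claim I would then invoke clause (d) applied to $u'$, giving $X_{\mathbf{n}(u') - 1} \subseteq \mrm{dom}(f_{u'}) \cap \mrm{ran}(f_{u'})$, combined with the fact (implicit in the enumeration $T_0 \subsetneq T_1 \subsetneq \cdots$) that $\mathbf{n}(u') = m - 1$, so $X_{m-2} \subseteq \mrm{dom}(f_{u'}) \cap \mrm{ran}(f_{u'})$. In alternative (i) this upgrades $a \in X_{m-1} \setminus \mrm{dom}(f_{u'})$ to $a \in X_{m-1} \setminus X_{m-2}$, yielding $\mathbf{n}(a) = m - 1$ and $\mathbf{n}(b) = m$; alternative (ii) is symmetric, yielding $\mathbf{n}(b) = m - 1$ and $\mathbf{n}(a) = m$. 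Taking $n := \mathbf{n}(x) + 1$ (the unique $n$ with $x \in X_{n-1} \setminus X_{<n-1}$), either $x$ is the element of the pair at level $m - 1$, in which case $n = m$ and $f^i_t(x) \in X_m \setminus X_{m-1} = X_n \setminus X_{n-1}$, or $x$ is the element at level $m$, in which case $n = m + 1$ and $f^i_t(x) \in X_{m-1} = X_{<n-1}$; uniqueness is immediate since $n$ is determined by $\mathbf{n}(x)$.

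The main obstacle I expect is the identification $\mathbf{n}(u') = m - 1$: Hypothesis~\ref{hyp_A1} only yields $\mathbf{n}(u') \leq m - 1$ a priori, and the equality reflects the incremental way $(T_n : n < \omega)$ is meant to be chosen. Everything else is a straightforward unwinding of clauses (d) and (g)($\beta$) at the unique minimal ancestor that introduces the pair $(a, b)$.
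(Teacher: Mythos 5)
Your decomposition into the pair $(a,b) \in f_t$, tracing it to the $<_T$-minimal introducer $u$, and applying (g)($\beta$) at $u$ with $s = u'$ the immediate predecessor, is the right strategy; it cleanly delivers $x \neq f^i_t(x)$ and the fact that exactly one of $\mathbf{n}(a), \mathbf{n}(b)$ equals $m = \mathbf{n}(u)$ while the other is $\leq m-1$. That part is correct.

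The obstacle you flagged at the end is, however, a genuine gap and not just a loose end. You need $\mathbf{n}(u') = m-1$ to run the clause~(d) argument, but Hypothesis~\ref{hyp_A1} only guarantees $\mathbf{n}(u') \leq m-1$ (by~(5)), and if $u'$ is the root of $T$ then $\mathbf{n}(u')=0$ while $m$ may be large; nothing in items (1)--(6) of Hypothesis~\ref{hyp_A1}, nor in clauses (a)--(j) of Definition~\ref{hyp_A2}, forces the enumeration $(T_n)_{n<\omega}$ to advance ``level by level'' along $<_T$-chains. Moreover, if one drops the assumption $\mathbf{n}(u')=m-1$ (say $\mathbf{n}(u') = k < m-1$), the second half of the Observation actually becomes false: clause~(d) gives $\mrm{dom}(f_{u'})\cup\mrm{ran}(f_{u'}) \subseteq X_k$, and a short cardinality argument using $X_{k-1} = \mrm{dom}(f_{u'})\cap\mrm{ran}(f_{u'})$ and injectivity of $f_{u'}$ shows $\mrm{dom}(f_{u'}) \subsetneq X_{m-2}$; any $a \in X_{m-2}\setminus\mrm{dom}(f_{u'}) \subseteq X_{m-1}$ is then caught by~(g)($\beta$) at $u$ and must be sent to some $b \in X_m \setminus X_{m-1}$, so $\mathbf{n}(b) = m > \mathbf{n}(a)+1$, violating the claimed disjunction. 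So the Observation's second assertion is really an \emph{extra} hypothesis about how $(T_n)$ refines the tree (namely that immediate $<_T$-successors have consecutive $\mathbf{n}$-values), and your proof should state that assumption explicitly rather than absorb it into a remark. Note also that your invocation of clause~(d) at $u'$ presupposes $\mathbf{n}(u')>0$; the case where $u'$ is the root needs to be handled separately (it is harmless only when $m=1$, which is exactly the consecutive case). The paper labels the Observation ``Easy'' and gives no proof, so there is nothing to compare against; but the difficulty you isolated is real and must be addressed, not waved away.
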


\begin{proof}  Easy.
\end{proof}


	\begin{cclaim}\label{K_co_non-empty} For $T$ as in Hypothesis~\ref{hyp_A1}, $\mathrm{K}^{\mrm{ri}}(T) \neq \emptyset$ (cf. Definition~\ref{hyp_A2}).
\end{cclaim}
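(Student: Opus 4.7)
The plan is to construct $\mathfrak{m}$ by recursion on $n$, drawing fresh natural numbers at each stage. Set $X_0 = \{0\}$ and $f_{t_0} = \emptyset$. Suppose $X_n$ and $\{f_t : t \in T_n\}$ have been defined. Enumerate the finite set $T_{n+1} \setminus T_n$, and for each $t$ in it, with $<_T$-maximal predecessor $s = s(t)$, define $f_t$ as $f_s$ augmented with a new pair $(x, y^t_x)$ for every $x \in X_n \setminus \mrm{dom}(f_s)$ (a ``Type~I'' pair) and a new pair $(x^t_y, y)$ for every $y \in X_n \setminus \mrm{ran}(f_s)$ (a ``Type~II'' pair). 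The fresh labels $y^t_x$ and $x^t_y$ together form the set $V_t \subseteq \omega \setminus X_n$, picked pairwise distinct across all $t \in T_{n+1} \setminus T_n$. Finally set $X_{n+1} = X_n \cup \bigcup_t V_t \cup \{z_n\}$, where $z_n$ is one extra fresh element, added to force the strict inclusion in~(i).

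Items (a)--(k) of Definition~\ref{hyp_A2} then follow by direct inspection: (d) holds since $\mrm{dom}(f_t) = X_n \cup \{x^t_y : y \in X_n \setminus \mrm{ran}(f_s)\}$ and $\mrm{ran}(f_t) = X_n \cup \{y^t_x : x \in X_n \setminus \mrm{dom}(f_s)\}$ meet exactly in $X_n$ and both sit inside $X_{n+1}$; (f) combines the inductive hypothesis with $f_s \subseteq f_t$; (g)$(\beta)$ exactly encodes the two shapes of the new pairs; (h) uses the pairwise disjointness of the $V_t$'s; and (i) is witnessed by $z_n$.

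The substantive point is (l), acyclicity of $G_k := (\mrm{seq}_k(X), R_k)$ for every $k \geq 1$. I would prove it by a max-level argument. Toward a contradiction, let $v_0 R_k v_1 R_k \cdots R_k v_{m-1} R_k v_0$ be a cycle with $m \geq 3$ and distinct $v_p$. Set $\mrm{lev}(\bar{x}) := \max_\ell \mathbf{n}(x_\ell)$, $N := \max_p \mrm{lev}(v_p)$, and fix $v_p$ with $\mrm{lev}(v_p) = N$. Any level-$N$ coordinate of $v_p$ either equals $z_N$ (leaving $v_p$ isolated, contradiction) or lies in some $V_t$ with $t \in T_N \setminus T_{N-1}$; by (h) all such coordinates must lie in a common $V_{t^*}$, else no $(t, i)$ has $v_p$ in its domain. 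Every $R_k$-edge at $v_p$ is then witnessed by some $(t, i)$ with $t \geq_T t^*$. The crucial observation is: if $t >_T t^*$, then one of the directions $i = \pm 1$ sends the level-$N$ coordinates of $v_p$ to fresh elements first introduced at some stage $>N$ along the chain from $t^*$ to $t$, producing a neighbour of level $>N$; and the other direction agrees, as an edge, with the one already produced by $(t^*, i)$. Thus the ``cycle-viable'' neighbours of $v_p$ (those of level $\leq N$) coincide with a single possible edge via $(t^*, i)$, where $i$ is determined by whether the level-$N$ coordinates of $v_p$ sit in the ``$\mrm{ran}$-side'' $\{y^{t^*}_x\}$ (giving $i = -1$) or the ``$\mrm{dom}$-side'' $\{x^{t^*}_y\}$ (giving $i = 1$); if they straddle these two sides, $v_p$ has no level-$(\leq N)$ neighbour at all. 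This contradicts $v_p$ having two distinct cycle-neighbours $v_{p-1}, v_{p+1}$, both of level $\leq N$. The slightly delicate ingredient is the tracing claim inside the crucial observation, which I would verify by induction on $\mathbf{n}(t)$ up the chain $t^* <_T \cdots <_T t$; the rest is routine bookkeeping.
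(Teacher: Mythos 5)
The paper leaves this claim as ``Straightforward'', so there is no reference proof to compare against; your proposal supplies the details, and the construction and the overall plan are correct: Type~I pairs $X_n \setminus \mathrm{dom}(f_s) \to$ fresh labels, Type~II pairs fresh labels $\to X_n \setminus \mathrm{ran}(f_s)$, pairwise disjoint fresh sets $V_t$, one spare element for strict growth, and then a max-level argument for clause~(l) showing that a vertex of maximal level $N$ on a putative cycle has at most one neighbour of level $\leq N$, while it would need two.

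However, two of your intermediate assertions are false as stated and need patching. A tuple containing the spare element (which at level $N$ is $z_{N-1}$, not $z_N$, since $z_n$ enters at stage $n+1$) is \emph{not} isolated: for every $t$ with $\mathbf{n}(t)>N$ one has $X_N \subseteq X_{\mathbf{n}(t)-1} \subseteq \mathrm{dom}(f^i_t)$ for both $i$, so such $(t,i)$ do furnish $R_k$-edges at $v_p$ --- only, they all land at level $>N$. For the same reason ``every $R_k$-edge at $v_p$ is then witnessed by some $(t,i)$ with $t\geq_T t^*$'' is wrong: any $t$ \emph{incomparable} to $t^*$ with $\mathbf{n}(t)>N$ also has $v_p$ in $\mathrm{dom}(f^i_t)$ and witnesses an edge. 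What you actually need, and what is true, is the weaker statement that every edge from $v_p$ \emph{to a tuple of level $\leq N$} is already the edge via $(t^*,i)$; this requires the extra observation that when $t$ is not $\geq_T t^*$ and $\mathbf{n}(t)>N$, then $t^*$ is absent from the ancestor chain of $t$, so each level-$N$ coordinate of $v_p$ (lying in $V_{t^*}$, disjoint from every $V_{s}$ on that chain) first enters $\mathrm{dom}$ and $\mathrm{ran}$ along the chain only at some stage $s$ of level $>N$, where it is matched with a fresh label of level $\mathbf{n}(s)>N$; hence both $f_t$ and $f_t^{-1}$ push $v_p$ to level $>N$. With that repair your $(t^*,\pm1)$ case split (dom-side, ran-side, or straddling) is correct, $v_p$ has at most one level-$\leq N$ neighbour, and the contradiction with a cycle of length $\geq 3$ follows.
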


	\begin{proof} Straightforward.
\end{proof}

	\begin{cclaim}\label{claim_for_star} Let $k \geq 1$ and $\bar{x}, \bar{y} \in \mrm{seq}_k(X)$.
	\begin{enumerate}[(A)]
	\item If $f^i_t(\bar{x}) = \bar{y}$, then:
	\begin{enumerate}[$(\cdot_1)$]
	\item $\mathbf{n}(\bar{x}) \in \{0, ..., \mathbf{n}(\bar{y})-1\} \cup \{ \mathbf{n}(\bar{y})+1\}$;
	\item $\mathbf{n}(\bar{x}) = \mathbf{n}(\bar{y}) - 1 \; \Leftrightarrow \; \bar{x} <^k_{\mathfrak{m}} \bar{y}$.
	\end{enumerate}
	\item If $f^i_t(\bar{x}) = \bar{y}$ and $\mathbf{n}(\bar{x}) < \mathbf{n}(\bar{y})$, then:
	$$\bar{x}' <^k_{\mathfrak{m}} \bar{y} \wedge \mathbf{n}(\bar{x}') = \mathbf{n}(\bar{x}) \; \Rightarrow \; \bar{x}' = \bar{x}.$$
	\item\label{claim_for_starC} If $\bar{x} E_k \bar{y}$, then there are $n \geq 0$, $\bar{x}_0, ..., \bar{x}_n$, $\bar{t} = (t_\ell : \ell < n)$, $\eta \in \{1, -1\}^n$ s.t.:
	\begin{enumerate}[(a)]
	\item $\bar{x} = \bar{x}_0$ and $\bar{y} = \bar{x}_n$;
	\item if $\ell < n$, then $f^{\eta(\ell)}_{t_\ell}(x_\ell) = x_{\ell+1}$; 
	\item if $\bar{x} \leq^k_{\mathfrak{m}} \bar{y}$ and $\ell < n$, then $\mathbf{n}(x_\ell) < \mathbf{n}(x_{\ell+1})$ (cf. Notation~\ref{notation_<_m}(\ref{notation_n(x)}));
	\item there is no $s < t_\ell$ such that $x_\ell \in \mrm{dom}(f^{\eta(\ell)}_s)$.
\end{enumerate}	 
	\item\label{claim_for_starD} In clause (C), if $\bar{x} \leq^k_{\mathfrak{m}} \bar{y}$, then $\bar{t}, \eta$ and $(\bar{x}_\ell : \ell \leq n)$ are unique, $\mathbf{n}(t_\ell) < \mathbf{n}(t_{\ell+1})$ and $x_{\ell+1} \in \mrm{suc}(x_\ell)$, for $\ell < n$.
	\item\label{claim_for_starE} If $x \in X$, $\bar{t} \in T^m$, $\eta \in \{1, -1\}^m$ and $x_0 = x$ and $x_\ell = f_{(\bar{s} \restriction \ell, \eta \restriction \ell)}(x)$, for $0 < \ell \leq m$, then for some $k \geq 1$, $\bar{i} = (i_0, ..., i_k)$ and $\bar{s}$ we have:
	\begin{enumerate}[$(\cdot_1)$]
	\item $0 = i_0 < \cdots < i_k \leq m$;
	\item $x_{i_\ell} = x_{i_{\ell+1} - 1}$, for $\ell < k$;
	\item $x_{i_k} = x_m$;
	\item for some $k_1 \leq k$ we have:
	\begin{enumerate}[(i)]
	\item for all $\ell < [k_1, k)$, $x_{i_{\ell+1}} \in \mrm{suc}(x_{i_\ell})$;
	\item for all $\ell < k_1$, $x_{i_{\ell}} \in \mrm{suc}(x_{i_\ell+1})$;
	\item $\bar{s} = (s_\ell : \ell \leq k)$, $s_\ell \leq_T t_\ell$, $f^{\eta_{i_\ell}}_{s_\ell}(x_{i_\ell}) = x_{i_{\ell+1}}$, for $\ell < k$ and ${(\bar{s}, (\eta(i_\ell)  : \ell < k))}$ is strongly reduced for $x = x_0$.
\end{enumerate}	
	\end{enumerate}
	\item Notice that in (E) we have that:
	\begin{enumerate}[($\cdot_1$)]
	\item the interval $(x_{i_\ell}, ..., x_{i_{\ell+1} - 1})$ is canceled to $x_{i_\ell}$;
	\item $\bar{i}$ and $\bar{s}$ are determined uniquely by $(x, \bar{t}, \nu)$. 
	\end{enumerate}
	\item\label{claim_for_starG} 
	\begin{enumerate}[$(\cdot_1)$]
	\item If $(\bar{t}, \eta) \in T^n \times \{1, -1\}^n$, $x \in \mrm{dom}(f_{(\bar{t}, \eta)})$ and $x/E_1$ is disjoint from $X_m$, then $X_m \subseteq \mrm{dom}(f_{(\bar{t}, \eta)})$;
	\item if $x, y \in X$ are $E_1$-equivalent, then there is a unique pair $(\bar{t}, \eta)$ such that $f_{(\bar{t}, \eta)}(x) = y$ and $(\bar{t}, \eta)$ is strongly reduced for $x$;
	\item if $f_{(\bar{t}, \eta)}(x) = y$ and $(\bar{t}, \eta)$ is strongly reduced for $x$, then $(\bar{t}, \eta)$ is unique;
	\item if $\bar{t}_1 = (t_{(1, \ell)} : \ell < n)$, $(\bar{t}_1, \eta_1)$ is strongly reduced for $x$ and $x/E_1 \cap X_m = \emptyset$, then, for every $\ell < n$, $t_{(1, \ell)} \notin T_m$.
	\end{enumerate}
	\item\label{claim_for_starH} If $\iota = 1, 2$, $(\bar{t}_\iota, \eta_\iota) \in T^{n(\iota)} \times \{1, -1\}^{n(\iota)}$, $(\bar{t}_1, \eta_1)$ is strongly reduced for $x$, $x \in \mrm{dom}(f_{(\bar{t}_1, \eta_1)}) \cap \mrm{dom}(f_{(\bar{t}_2, \eta_2)})$ and $x/E_1 \cap X_m = \emptyset$, then:
	\begin{enumerate}[$(\cdot_1)$]
	\item $f_{(\bar{t}_1, \eta_1)} \restriction X_m = f_{(\bar{t}_2, \eta_2)} \restriction X_m$;
	\item for every $\ell < n(1)$ there is $j < n(2)$ such that we have the following:
	$$t_{(1, \ell)} \leq_T t_{(2, j)}, \; \eta_1(\ell) = \eta_2(j), \; t_{(1, \ell)} \notin T_m,$$
where we let $\bar{t}_1 = (t_{(1, \ell)} : \ell < n(1))$ and $\bar{t}_2 = (t_{(2, j)} : j < n(2))$.
	\end{enumerate}
	\end{enumerate}
\end{cclaim}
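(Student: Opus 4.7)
The plan is to prove the eight clauses in order, exploiting the ``level-by-level'' structure encoded in items (d)--(i) of Definition~\ref{hyp_A2} together with the tree ordering on $T$. The technical backbone is the following observation: for every pair $(x,y) \in f_t$ the set $\{s \leq_T t : (x,y) \in f_s\}$ is a nonempty finite $<_T$-chain by item (f) of Definition~\ref{hyp_A2}, hence admits a $<_T$-minimum $s(x,y)$; then item (g)$(\beta)$ applied to $s(x,y)$ gives $\{\mathbf{n}(x), \mathbf{n}(y)\} = \{\mathbf{n}(s(x,y))-1, \mathbf{n}(s(x,y))\}$, and item (h) ensures that the fresh level-$\mathbf{n}(s(x,y))$ element lies in exactly one of $\mrm{ran}(f^{-1}_{s(x,y)}) \setminus X_{\mathbf{n}(s(x,y))-1}$ or $\mrm{ran}(f^{1}_{s(x,y)}) \setminus X_{\mathbf{n}(s(x,y))-1}$, uniquely recovering the pair and direction from its fresh component.

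For (A), apply the backbone to each componentwise pair $(x_\ell, y_\ell) \in f^i_t$, setting $m_\ell := \mathbf{n}(s(x_\ell, y_\ell))$; then $\max(\mathbf{n}(x_\ell), \mathbf{n}(y_\ell)) = m_\ell$ and $M := \max_\ell m_\ell = \max(\mathbf{n}(\bar{x}), \mathbf{n}(\bar{y}))$. The case $\mathbf{n}(\bar{x}) = \mathbf{n}(\bar{y}) = M$ is ruled out via item (h): two indices where the maximum is attained would contribute distinct fresh level-$M$ elements on opposite sides of the matching at the common $s \in T_M$ (unique by tree-ness), and a case analysis on whether the culprit components coincide closes this case. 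The analogous argument applied at the maximizing index shows $|\mathbf{n}(\bar{x}) - \mathbf{n}(\bar{y})| = 1$ whenever $\mathbf{n}(\bar{x}) > \mathbf{n}(\bar{y})$, giving $(\cdot_1)$; clause $(\cdot_2)$ is then immediate from the definition of $<^k_{\mathfrak{m}}$. Clause (B) follows the same pattern: two distinct $\bar{x}, \bar{x}'$ of common level mapping to $\bar{y}$ via the same $t$ would place two distinct fresh level-$\mathbf{n}(\bar{y})$ elements in the same ``slot'' of the matching attached to $s$, again contradicting item (h).

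For (C) and (D), clause (C) unfolds the definition of $E_k$ as the connectivity of $R_k$: take any $R_k$-path $\bar{x} = \bar{x}_0, \dots, \bar{x}_n = \bar{y}$ with edge-witnesses $(t_\ell, \eta(\ell))$, then replace each $t_\ell$ by the minimal ancestor $s(x_\ell, x_{\ell+1})$ produced by the backbone, yielding (d); monotonicity under $\bar{x} \leq^k_{\mathfrak{m}} \bar{y}$ is (A)$(\cdot_2)$ applied edgewise. For (D), uniqueness relies on the acyclicity of $R_k$ (item (l) of Definition~\ref{hyp_A2}) combined with (B): two distinct monotone paths between $\bar{x}$ and $\bar{y}$ would close into a cycle. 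Clauses (E)--(F) encode a cancellation procedure, which I would prove by induction on $m$: track the $\mathbf{n}$-trajectory $(\mathbf{n}(x_\ell))_{\ell \leq m}$, and at each internal local extremum excise the corresponding ``back-and-forth'' detour; the remaining indices form the desired $i_0 < \dots < i_k$ with $k_1$ at the trajectory's global minimum, and uniqueness of $(\bar{i}, \bar{s})$ follows from (D) applied to the monotone descending and ascending halves.

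Clauses (G) and (H) are the main payoff. Two strongly reduced representations for the same $(x,y) \in X^2$ yield two ``descent-then-ascent'' monotone paths in $R_1$ with shared endpoints, and by (D) these coincide, which gives (G)$(\cdot_2)$--$(\cdot_4)$. For (G)$(\cdot_1)$, if $x/E_1 \cap X_m = \emptyset$, then in the strongly reduced representation for $x$ every $t_\ell$ must satisfy $\mathbf{n}(t_\ell) > m$ (otherwise $f^{\eta(\ell)}_{t_\ell}$ would drag some element of $X_m$ into the trajectory of $x$, placing that element in $x/E_1$); by item (d) of Definition~\ref{hyp_A2} the domain of each $f^{\eta(\ell)}_{t_\ell}$ then contains $X_m$, and a short induction on the composition (using that images of $X_m$ stay in $X_{\mathbf{n}(t_\ell)-1}$ at each step) delivers $X_m \subseteq \mrm{dom}(f_{(\bar{t}, \eta)})$. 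Clause (H) is then a componentwise comparison: for each $\ell < n(1)$ the backbone matches $t_{(1,\ell)}$ with a unique $j < n(2)$ having $t_{(1,\ell)} \leq_T t_{(2,j)}$ and $\eta_1(\ell) = \eta_2(j)$, and agreement on $X_m$ is inherited from applying (G)$(\cdot_1)$ to each of $(\bar{t}_1, \eta_1), (\bar{t}_2, \eta_2)$. The main obstacle I foresee is the bookkeeping in (H)$(\cdot_2)$: the pairing between indices of the two representations is implicit in the cancellation procedure of (E)--(F) and must be extracted by carefully tracking which $<_T$-minimal ancestor was selected at each step.
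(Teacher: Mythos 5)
Your argument for $(A)(\cdot_1)$ has a genuine gap. You correctly reduce to the situation where some $x_{\ell_1}$ and some $y_{\ell_2}$ are both fresh at the maximal level $M$, with a common minimal witness $s \in T_M$ (tree-ness below $t$). Item~(\ref{item_meet_tree}) of Definition~\ref{hyp_A2} then gives that $\mrm{dom}(f_s)\setminus X_{M-1}$ and $\mrm{ran}(f_s)\setminus X_{M-1}$ are disjoint, so $x_{\ell_1}\neq y_{\ell_2}$ — but that is consistent, not contradictory. When $\ell_1=\ell_2$ you do get a contradiction from $(g)(\beta)$, but when $\ell_1\neq\ell_2$ nothing breaks: Definition~\ref{hyp_A2} permits $f_s$ to simultaneously send a fresh element of $\bar{x}$ into $X_{M-1}$ at one coordinate and send an element of $X_{M-1}$ to a fresh element of $\bar{y}$ at another coordinate, which yields $\mathbf{n}(\bar{x})=\mathbf{n}(\bar{y})=M$. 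Your phrase ``a case analysis on whether the culprit components coincide closes this case'' is where the argument silently fails; you would need an additional structural hypothesis (e.g. reasonableness of $\bar{x}$, so that the fresh coordinate is forced to be the last one on both sides) to rule this out, and the claim as written does not supply one.

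For clauses (E)--(H) your route is essentially the paper's: the paper proves (E),(F) by induction on $\mrm{lg}(\bar{t})$, derives (G) from them, and for (H) applies (E) to $(\bar{t}_2,\eta_2)$ to cancel it into a strongly reduced $(\bar{s},\eta_2')$, then invokes the uniqueness in $(G)(\cdot_2)$ to conclude $(\bar{s},\eta_2')=(\bar{t}_1,\eta_1)$, from which $s_\ell=t_{(1,\ell)}\leq_T t_{(2,i_\ell)}$ and $\eta_1(\ell)=\eta_2(i_\ell)$ drop out, with $t_{(1,\ell)}\notin T_m$ coming from $(G)(\cdot_4)$. This is exactly the ``bookkeeping'' you flag as the main obstacle in $(H)(\cdot_2)$: rather than extracting the index pairing componentwise from the cancellation data, one should cancel one side outright and compare the resulting strongly reduced tuple with the other using the uniqueness of strongly reduced forms. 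Your sketch of $(G)(\cdot_1)$ is also looser than it looks: the inequality $\mathbf{n}(t_\ell)>m$ for every $\ell$ is correct (every $x_\ell$ lies in $x/E_1$, hence outside $X_m$, and $\mathbf{n}(x_\ell)\leq\mathbf{n}(t_\ell)$), but the step asserting that the image of $X_m$ ``stays in $X_{\mathbf{n}(t_\ell)-1}$ at each step'' needs care, since a single application of $f^{\eta(\ell)}_{t_\ell}$ can push a point of $X_m$ one level up and out of $X_m$; the real point is that the composite still stays inside the domains because the $\mathbf{n}(t_\ell)$ are all $>m$, and this is most cleanly controlled after passing to the strongly reduced form via (E), which is what the paper's ``prove (G) using (E) and (F)'' is gesturing at.
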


	\begin{proof} Clauses (A), (B), (C) are immediate by the definitions. Clauses (E) and (F) can be proved by induction on $\mrm{lg}(\bar{t})$. Clause (G) can be proved using (E) and (F). Finally, we give some details on clause (H). To this extent, we apply clause (E) with $(x, \bar{t}_2, \eta_2)$ here standing for $(x, \bar{t}, \nu)$ there and thus get $\bar{s}, k, k_1, \bar{i}$. So, in particular, letting $\eta'_2 = (\eta_{(2, i_\ell)} : \ell \leq k)$ we have the following:
\begin{enumerate}[$(\cdot_1)$]
	\item $(\bar{s}, \eta'_2)$ is strongly reduced for $x$;
	\item $f_{(\bar{s}, \eta'_2)}(x) = f_{(\bar{t_2}, \eta_2)}(x) = f_{(\bar{t_1}, \eta_1)}(x)$.
\end{enumerate}
Recall now that:
\begin{enumerate}[$(\cdot_3)$]
	\item $(\bar{t}_1, \eta_1)$ is strongly reduced for $x$.
\end{enumerate}
Thus, by $(\cdot_1)$-$(\cdot_3)$ we can apply clause (G)$(\cdot_2)$, thus getting:
\begin{enumerate}[$(\cdot_4)$]
	\item $(\bar{s}, \eta'_2) = (\bar{t}_1, \eta_1)$, and so $\mrm{lg}(\bar{t}_2) = \mrm{lg}(\bar{s}) = k$.
\end{enumerate}
So, if $\ell \leq k$, then $s_\ell = t_{(1, \ell)} \leq_T t_{(2, i_\ell)}$, and so we are done. Finally, the fact that for $\ell < n(1)$ we have that $t_{(1, \ell)} \notin T_m$ is by clause $(G)(\cdot_4)$ of the present claim.
\end{proof}

	\begin{observation}\label{observation_Xtree}
	\begin{enumerate}[(1)]
	\item (Recalling \ref{notation_<_m}(\ref{k=1})) $(X, <_\mathfrak{m})$ is a tree with $\omega$ levels.
	\item Every $z \in X_0$ is a root of the tree $(X, <_\mathfrak{m})$.
	\item If $y \in X_{n+1} \setminus X_n$, then for at most one $x \in X_n$ we have $y \in \mrm{suc}(x)$. 
	\item\label{wlog_reasonable} If $\bar{x} \in \mrm{seq}_k(X)$, then some permutation of $\bar{x}$ is reasonable.
	\item For every $k \geq 1$, $(\mrm{seq}_k(X), <^k_{\mathfrak{m}})$ is a tree, with possibly more than one root.
	\item For every $\bar{x}, \bar{y} \in \mrm{seq}_k(X)$, if $\bar{x}, \bar{y}$ admit a common lower bound we denote by $\bar{x} \wedge \bar{y}$ their maximal common lower bound (recall $(\mrm{seq}_k(X), <^k_{\mathfrak{m}})$ is a tree).
	\item\label{reasonable_implies_reasonable} If $\bar{x}$ is reasonable and $\bar{x} \leq^k_\mathfrak{m} \bar{y}$, then $\bar{y}$ is reasonable.
	\item\label{observation_Xtree_item8} If $\bar{x} \in \mrm{seq}_k(X)$ is reasonable, $\bar{x} \leq^k_\mathfrak{m} \bar{y}^1 = (y^1_0, ..., y^1_{k-1})$, $\bar{x} \leq^k_\mathfrak{m} \bar{y}^2 = (y^2_0, ..., y^2_{k-1})$ and $y^1_{k-1} = y^2_{k-1}$, then $ \bar{y}^1 = \bar{y}^2$.
	\item If $\bar{x} \in \mrm{seq}_k(X)$, $\bar{y}$ is a root of $(\bar{x}/E_k, <^k_\mathfrak{m})$ and  $\bar{y}$ is reasonable, then so is $\bar{x}$.
	\end{enumerate}
\end{observation}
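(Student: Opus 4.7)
The plan is to verify the nine items by a sequence of elementary appeals to Claim~\ref{claim_for_star} and Observation~\ref{observation_new_elements_are_moved}, organized in three groups.

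\emph{Tree-structural items (1), (2), (3), (5), (6).} These follow immediately from the fact that $\mrm{suc}$ strictly increases $\mathbf n$: by Claim~\ref{claim_for_star}(A)($\cdot_2$), the set of $<^k_\mathfrak m$-predecessors of any $\bar y$ has $\mathbf n$-values bounded by $\mathbf n(\bar y) - 1$, and by Claim~\ref{claim_for_star}(B) there is exactly one such predecessor at each level actually reached. Hence $(\mrm{seq}_k(X), <^k_\mathfrak m)$ is a tree, giving (5) and its $k=1$ specialization (1); (3) is Claim~\ref{claim_for_star}(B) with $k=1$; (2) is immediate since $\mathbf n \equiv 0$ on $X_0$; and (6) is formal in any tree with finite initial segments. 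Item (4) is proved by sorting the entries of $\bar x$ in non-decreasing order of $\mathbf n$, which satisfies the reasonableness condition of Notation~\ref{notation_<_m}(\ref{def_reasonable_item}) by definition.

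\emph{Reasonableness propagation, items (7) and (8).} For (7) we induct along the chain of Claim~\ref{claim_for_star}(D), reducing to the one-step case $\bar y = f^i_t(\bar x)$ with $\mathbf n(\bar y) = \mathbf n(\bar x) + 1$. Unwinding $f_t$ on each coordinate by passing to the first $s \leq_T t$ that activates it, and applying clause (\ref{item_no_fix_point})$(\beta)$ of Definition~\ref{hyp_A2} together with Observation~\ref{observation_new_elements_are_moved}, each coordinate's $\mathbf n$-value changes by exactly $\pm 1$. The bookkeeping of which coordinates go up and which go down, combined with $\mathbf n(\bar y) = \mathbf n(\bar x) + 1$ and reasonableness of $\bar x$, yields reasonableness of $\bar y$. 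Item (8) then follows: each step of the chain from Claim~\ref{claim_for_star}(D) increases the $\mathbf n$-value of the last coordinate by exactly one, so the last-coordinate sequence traces the unique $<_\mathfrak m$-chain from $x_{k-1}$ to $y^\iota_{k-1}$ in the tree from (1); the pair $(t_j, i_j)$ at each step is then uniquely pinned down by the pairwise-disjointness condition (\ref{item_meet_tree}); and hence the entire chain, and in particular $\bar y^\iota$, is determined by $\bar x$ and $y^\iota_{k-1}$, giving $\bar y^1 = \bar y^2$.

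\emph{The main obstacle is item (9).} The difficulty is that the $E_k$-equivalence class is connected through the symmetric relation $R_k$, whereas (7) only transports reasonableness upward in $<^k_\mathfrak m$. The plan is to show that any root $\bar y$ of $(\bar x / E_k, <^k_\mathfrak m)$ satisfies $\bar y \leq^k_\mathfrak m \bar x$, after which (7) closes the argument. Fix the simple path $\bar y = \bar u_0, \bar u_1, \ldots, \bar u_n = \bar x$ in $(\mrm{seq}_k(X), R_k)$, which is unique by the acyclicity condition (\ref{A2_no_cycles}). By Claim~\ref{claim_for_star}(A)($\cdot_1$, $\cdot_2$) combined with the definition of $\mrm{suc}$, each edge changes $\mathbf n$ by exactly $1$. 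Since $\bar y$ is $<^k_\mathfrak m$-minimal in the component, the first edge must be an up-edge; and at any hypothetical local peak $\bar u_j$ (i.e.\ $\mathbf n(\bar u_{j-1}) = \mathbf n(\bar u_{j+1}) = \mathbf n(\bar u_j) - 1$), Claim~\ref{claim_for_star}(B) applied to the up-edge $\bar u_{j-1} <^k_\mathfrak m \bar u_j$ forces $\bar u_{j+1} = \bar u_{j-1}$, contradicting simplicity of the path in an acyclic graph. Therefore $\mathbf n$ is strictly increasing along the path, yielding $\bar y <^k_\mathfrak m \cdots <^k_\mathfrak m \bar x$, and (7) applied to this inequality gives reasonableness of $\bar x$.
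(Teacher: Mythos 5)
The paper's own ``proof'' is essentially a one-line pointer (``This is simply tracking down the definition. E.g.\ item~(3) is by \ref{hyp_A2}(\ref{item_no_fix_point})--(\ref{item_meet_tree})''), so any written-out proof is a nontrivial elaboration; yours is a reasonable one and, in structure, correct. The main difference in route: where the paper points directly at the clauses of Definition~\ref{hyp_A2} (e.g.\ (\ref{item_no_fix_point})--(\ref{item_meet_tree}) for item~(3)), you systematically route everything through the already-stated Claim~\ref{claim_for_star}, which is legitimate since it precedes Observation~\ref{observation_Xtree}. For item~(9) — the only genuinely non-formal item — your strategy of first establishing $\bar y \leq^k_\mathfrak m \bar x$ (using acyclicity of $R_k$, the ``each edge changes $\mathbf n$ by $\pm 1$'' fact at tuple level from Claim~\ref{claim_for_star}(A), and the uniqueness of predecessors from Claim~\ref{claim_for_star}(B) to rule out local peaks) and then invoking item~(7) is a clean reduction that the paper does not spell out.

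Two points of imprecision worth flagging. First, in the proof of (7) you assert that each \emph{coordinate}'s $\mathbf n$-value ``changes by exactly $\pm 1$'', but Observation~\ref{observation_new_elements_are_moved} only gives ``up by exactly~$1$ or down by \emph{at least}~$1$'' (the clause $f^i_t(x) \in X_{<n-1}$ allows a drop of more than one level); the ``exactly $\pm 1$'' holds at the tuple level $\mathbf n(\bar x)$ via Claim~\ref{claim_for_star}(A)($\cdot_2$), but not coordinatewise. Your ``bookkeeping'' sentence is then too vague to check: you need to argue that no coordinate at level $\mathbf n(\bar x)$ can descend while another ascends under the same $f^i_t$ (which would break reasonableness), and this takes a genuine argument using clause (\ref{item_no_fix_point})$(\beta)$ together with the fact that $\mrm{dom}(f_s) \supseteq X_{\mathbf n(s)-1}$ from clause~(\ref{def_of_f_t}), not just a count. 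Second, in (8) the passage ``the pair $(t_j, i_j)$ at each step is then uniquely pinned down'' conflates minimality with respect to the last coordinate with minimality with respect to the whole tuple — the $t_\ell$ of Claim~\ref{claim_for_star}(C)(d) is minimal for the full tuple $\bar x_\ell$, not for $(\bar x_\ell)_{k-1}$ alone. The conclusion still holds (for a reasonable tuple the last coordinate dictates the constraint), but as written the step is a gap. Both are local and repairable; the overall decomposition and the treatment of item~(9) are sound.
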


	\begin{proof} This is simply traking down the definition. E.g. item (3) is by \ref{hyp_A2}(\ref{item_no_fix_point})-(\ref{item_meet_tree}). 
\end{proof}


	\begin{definition}\label{def_G02_cohopfian} Let $\mathfrak{m} \in \mathrm{K}^{\mrm{ri}}(T)$ (i.e. as in Convention~\ref{the_m_convention}).
	\begin{enumerate}[(1)]
	\item Let $G_2 = G_2[\mathfrak{m}]$ be $\bigoplus \{ \mathbb{Q}x : x \in X\}$.
	\item Let $G_0 = G_0[\mathfrak{m}]$ be the subgroup of $G_2$ generated by $X$, i.e. $\bigoplus \{ \mathbb{Z}x : x \in X\}$.
	\item\label{hatf_on_G2} For $t \in T$ and $i \in \{1, -1\}$, let:
	\begin{enumerate}[(a)]
	\item $H^i_{(2, t)} = \bigoplus \{ \mathbb{Q}x : x \in \mrm{dom}(f^i_t)\}$;
	\item $I^i_{(2, t)} = \bigoplus \{ \mathbb{Q}x : x \in \mrm{ran}(f^i_t)\}$;
	\item\label{hat_f} $\hat{f}^{(i, 2)}_t$ is the (unique) isomorphism from $H^i_{(2, t)}$ onto $I^i_{(2, t)}$ such that $x \in \mrm{dom}(f^i_t)$ implies that $\hat{f}^{(i, 2)}_t(x) = f^i_t(x)$ (cf. Definition~\ref{hyp_A2}(\ref{def_of_f_t})).
	\end{enumerate}
	\item For $i \in \{1, -1\}$ and $t \in T$, we define $H^i_{(0, t)} := H^i_{(2, t)} \cap G_0$ and $I^i_{(0, t)} := I^i_{(2, t)} \cap G_0$.
	\item For $\hat{f}^{(i, 2)}_t$ as above, we have that $\hat{f}^{(i, 2)}_t [H^i_{(0, t)}] = I^i_{(0, t)}$. We define:
	$$\hat{f}^{(i, 0)}_t = \hat{f}^{(i, 2)}_t \restriction H^i_{(0, t)}.$$
	\item\label{n[a]} For $a \in G^+_2$, let $\mathbf{n}(a)$ be the minimal $n < \omega$ such that $a \in \langle X_{n}\rangle^*_{G_2}$.
	\item\label{def_<*} We define the partial order $<_*$ on $G^+_0$ by letting $a <_* b$ if and only if $a \neq b \in G^+_0$ and, for some $0 < n < \omega$,  $a_0, ..., a_n \in G_0, a_0 = a, a_n = b$ and:
	$$\ell < n \Rightarrow \exists i \in \{1, -1\} \, \exists t \in T(\hat{f}^{(i, 0)}_t(a_\ell) = a_{\ell + 1}) \text{ and }$$ 
	$$\mathbf{n}(a_\ell) < \mathbf{n}(a_{\ell+1}).$$
	\item For $a = \sum_{\ell < m}q_\ell x_\ell$, with $(x_\ell : \ell < m) \in \mrm{seq}_m(X)$ and $q_\ell \in \mathbb{Q}^+$, we let:
	$$\mathrm{supp}(a) = \{ x_\ell : \ell < m \}.$$
	\item\label{the_mathcal_E} $\mathcal{E}_{\mathfrak{m}}$ is the equivalence relation on $G_2^+$ which is the closure to an equivalence relation of the partial order $\leq_*$.
	\item\label{f_eta} Assume $n < \omega$, $\bar{t} \in T^n$, $\eta \in \{1, -1\}^n$, $\iota \in \{0, 2\}$, then we let:
	$$\hat{f}_{(\bar{t}, \eta)}^\iota = (\cdots \circ \hat{f}^{(\eta(\ell), \iota)}_{t_\ell} \circ \cdots)_{\ell < n}.$$
\end{enumerate}
\end{definition}

%

	\begin{lemma}\label{G0_is_tree}\begin{enumerate}[(1)]
	\item\label{G0_is_tree_restrictiontoX}  $<_* \restriction X = <^1_{\mathfrak{m}} = <_{\mathfrak{m}} $ (cf. \ref{notation_<_m}(\ref{<_mathfrak})(\ref{k=1})).
	\item $(G_0, <_*)$ is a tree with $\omega$ levels (recall Hypothesis~\ref{hyp_A1}(\ref{omega_levels})).
	\item\label{G0_is_tre_item4} If $s \leq_T t$, then $\hat{f}^{(i, \ell)}_s \subseteq \hat{f}^{(i, \ell)}_t$, for $\ell \in \{0, 2 \}$, $i \in \{1, -1\}$.
	\item\label{n[a]1} If $t \in T$, $\hat{f}^{(i, 2)}_t(a) = b$, $a \in G^+_0$, then $\mathbf{n}(a) \neq \mathbf{n}(b)$.
	\item If $a <_* b$ (so $a, b \in G^+_0$), then the sequence $(a_\ell : \ell \leq n)$ from \ref{def_G02_cohopfian}(\ref{def_<*}) is unique.
	\item If $a_1 <_* a_2$, and, for $\ell \in \{ 1, 2 \}$, $a_\ell = \sum_{i < k} q_ix^\ell_i$, $q_i \in \mathbb{Q}^+$, $\bar{x}^\ell = (x^\ell_i : i < k) \in \mrm{seq}_k(X)$, then maybe after replacing $\bar{x}^1$ with \mbox{a permutation of it, $\bar{x}^1 \leq^k_\mathfrak{m} \bar{x}^2$.}
\end{enumerate}
\end{lemma}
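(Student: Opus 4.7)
The plan is to prove items in the order (3), (1), (4), (6), (5), (2), since the later ones build on the earlier. For (3), Hypothesis~\ref{hyp_A2}(\ref{item_fs_sub_ft}) gives $f_s \subseteq f_t$ whenever $s \leq_T t$; since $\hat{f}^{(i,2)}_t$ is by Definition~\ref{def_G02_cohopfian}(\ref{hat_f}) the unique $\mathbb{Q}$-linear isomorphism $H^i_{(2,t)} \to I^i_{(2,t)}$ extending $f^i_t$, the inclusion $\hat{f}^{(i,2)}_s \subseteq \hat{f}^{(i,2)}_t$ follows by $\mathbb{Q}$-linearity, and restricting to $G_0$ gives the case $\ell = 0$. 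For (1), given $x, y \in X$ and a witnessing $<_*$-chain $(a_\ell)_{\ell \le n}$ with $a_0 = x$, each step $\hat{f}^{(i,0)}_{t}$ applied to a basis element yields a basis element $f^i_t(x) \in X$, so by induction each $a_\ell \in X$; the chain is then precisely a chain of $\mrm{suc}$-steps, giving $x <_\mathfrak{m} y$. The converse, that $\mrm{suc}$-chains in $X$ witness $<_*$, is direct from the definitions.

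For (4), write $a = \sum_{j<m} q_j x_j$ with $(x_j) \in \mrm{seq}_m(X)$ and $q_j \in \mathbb{Z}^+$. Since $a \in H^i_{(2,t)}$ forces each $x_j \in \mrm{dom}(f^i_t)$, $\mathbb{Q}$-linearity yields $b = \sum_j q_j f^i_t(x_j)$ with the $f^i_t(x_j)$ distinct, so $\mathbf{n}(a) = \max_j \mathbf{n}(x_j)$ and $\mathbf{n}(b) = \max_j \mathbf{n}(f^i_t(x_j))$. Applying Observation~\ref{observation_new_elements_are_moved} to each $x_j$, together with an induction on $\mathbf{n}(t)$ that separates the inherited part (on $\mrm{dom}(f_s)$ for $s <_T t$ maximal, handled via (3)) from the fresh-at-level-$\mathbf{n}(t)$ part (governed by \ref{hyp_A2}(\ref{item_no_fix_point})$(\beta)$, which forces a level jump between $X_{\mathbf{n}(t)-1}$ and $X_{\mathbf{n}(t)} \setminus X_{\mathbf{n}(t)-1}$), one concludes that the two maxima cannot coincide.

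Item (6) follows by induction on the chain length: each step $\hat{f}^{(\eta(\ell),0)}_{t_\ell}$ sends $a_\ell = \sum_j q_j x^\ell_j$ to $a_{\ell+1} = \sum_j q_j f^{\eta(\ell)}_{t_\ell}(x^\ell_j)$, so after a single upfront permutation of $\bar{x}^1$ the supports evolve as $\bar{x}^\ell R_k \bar{x}^{\ell+1}$, and the level-increase condition $\mathbf{n}(a_\ell) < \mathbf{n}(a_{\ell+1})$ from \ref{def_G02_cohopfian}(\ref{def_<*}), combined with Claim~\ref{claim_for_star}(A)($\cdot_2$), promotes each $R_k$-step to a $<^k_\mathfrak{m}$-step. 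For (5), uniqueness of $(a_\ell)$ reduces via (6) to uniqueness of the corresponding chain in $(\mrm{seq}_k(X), <^k_\mathfrak{m})$, a tree by Observation~\ref{observation_Xtree}(5); reasonability (free by Observation~\ref{observation_Xtree}(\ref{wlog_reasonable})) lets us apply Observation~\ref{observation_Xtree}(\ref{observation_Xtree_item8}) to pin down the intermediate sequences, and since the rational coefficients $q_j$ are transported verbatim at each step, the full chain $(a_\ell)$ is uniquely recovered.

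Finally, (2) combines the pieces: predecessors of $a$ under $<_*$ correspond bijectively (up to support permutation, coefficients preserved) to predecessors of $\mrm{supp}(a)$ in $(\mrm{seq}_k(X), <^k_\mathfrak{m})$, which form a well-ordered set; $\omega$ many levels follow from Hypothesis~\ref{hyp_A1}(\ref{omega_levels}) and the strictly increasing $\mathbf{n}$ along chains. The main obstacle will be (4): verifying that the maximum support level genuinely changes under $\hat{f}^{(i,2)}_t$ demands careful book-keeping around $(\beta)$ to isolate fresh pairs at level $\mathbf{n}(t)$ and track which basis elements actually cross levels, because absent this the level-strict-increase needed in (6) would break down and the tree structure on $G_0$ would not follow.
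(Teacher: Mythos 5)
Your decomposition and ordering is sensible and tracks the paper's treatment: the paper proves (3) by citing \ref{hyp_A2}(\ref{item_fs_sub_ft}), dismisses (1), (5), (6), (2) as ``unraveling definitions,'' and elaborates only on (4). Your sketches of the easy items are at roughly the paper's level of detail, and the route (3)$\to$(1)$\to$(4)$\to$(6)$\to$(5)$\to$(2) is reasonable.

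The genuine gap is in (4), and it is exactly the step you yourself flag at the end. The sentence ``one concludes that the two maxima cannot coincide'' is the non-trivial claim, and the ingredients you cite do not deliver it. Observation~\ref{observation_new_elements_are_moved} only controls \emph{individual} basis elements: for $x \in \mrm{dom}(f^i_t)$ one gets $\mathbf{n}(f^i_t(x)) = \mathbf{n}(x)+1$ or $\mathbf{n}(f^i_t(x)) \leq \mathbf{n}(x)-1$. Nothing in \ref{hyp_A2} prevents a single $\mrm{supp}(a)$, with $t \in T_m \setminus T_{<m}$, from containing both a support element of level $m-1$ whose image is fresh in $X_m \setminus X_{m-1}$ (first disjunct of \ref{hyp_A2}(\ref{item_no_fix_point})$(\beta)$) and a support element of level $m$ whose image drops into $X_{m-1}$ (second disjunct). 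Both kinds of fresh pairs at level $m$ are allowed simultaneously, and if both occur in $\mrm{supp}(a)$ then $\mathbf{n}(a) = m = \mathbf{n}(b)$, defeating the conclusion. Your ``induction on $\mathbf{n}(t)$'' does not close this, because the maximum of $\mrm{supp}(b)$ need not be realized at the image of the element realizing the maximum of $\mrm{supp}(a)$. For comparison, the paper's own one-paragraph argument for (4) asserts $f_t(x_n) \notin \langle X_{\mathbf{n}(x_n)}\rangle^*_{G_2}$ for the top support element $x_n$, which presupposes $f_t(x_n)$ moved \emph{up}; the observation equally allows it to move down into $X_{\mathbf{n}(x_n)-1}$, so the paper is relying on something not spelled out there either. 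To make your (4) go through you would need either an additional argument ruling out the mixed-direction configuration for the supports in play, or a weakening of (4) to the situations where it is actually used (e.g.\ along $<_*$-chains, where the strict level-increase of Definition~\ref{def_G02_cohopfian}(\ref{def_<*}) is imposed by fiat). As written, the proposal leaves the crux of (4) unresolved.
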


	\begin{proof} Unraveling definitions, e.g. for item (3) use Definition~\ref{hyp_A2}(\ref{item_fs_sub_ft}), we elaborate only on item (4). To this extent, let $a, b$ be as there. W.l.o.g. $a <_* b$, as can replace $b$ with $a$, depending on the value of $i$ in $\hat{f}^{(i, 2)}_t$.
As $a \neq 0$ (recall $a \in G^+_0$), let $a = \sum_{j \leq n} q_j x_j$, with $(x_j : j \leq n)$ with no repetitions and $q_j \in \mathbb{Q}^+$. 
	W.l.o.g. $\mathbf{n}(x_j) \leq \mathbf{n}(x_{j+1})$, for $j < n$ (cf. Observation~\ref{observation_Xtree}(\ref{wlog_reasonable})). 
	Clearly $a \in \langle X_{\mathbf{n}(x_n)} \rangle^*_{G_2}$ but $a \notin \langle X_{<\mathbf{n}(x_n)} \rangle^*_{G_2}$. As $\hat{f}^{(i, 2)}_t(a)$ is well-defined, clearly $\{x_j : j \leq n \} \subseteq \mrm{dom}(f^{(i, 2)}_t)$ and $b = \hat{f}^{(i, 2)}_t(a) = \sum_{j \leq n} q_j f^{(i, 2)}_t(x_j)$ and, as $f^{(i, 2)}_t$ is $1$-to-$1$, the sequence $(f^{(i, \ell)}_t(x_j) : j \leq n)$ is with no repetitions. 
	By Observation~\ref{observation_new_elements_are_moved} applied with $n$ there as $\mathbf{n}(x_n)$ here, $f_t(x_n) \notin \langle X_{\mathbf{n}(x_n)} \rangle^*_{G_2} $, hence we have that  $\mathbf{n}(b) \neq \mathbf{n}(a)$, so (4) holds.
\end{proof}

	\begin{cclaim}\label{claim9A} If (A), then (B), where:
	\begin{enumerate}[(A)]
	\item
	\begin{enumerate}[(a)]
	\item $a, b_\ell \in G^+_0$, for $\ell < \ell_*$ and $\ell_* \geq 2$;
	\item $a \mathcal{E}_{\mathfrak{m}} b_\ell$ and the $b_\ell$'s are with no repetitions;
	\item $a = \sum \{ q_i x_i : i < j \}$ and $j > 1$;
	\item $\bar{x} = (x_i : i  < j) \in X^j$ is injective and reasonable;
	\item $q_i \in \mathbb{Z}^+$;
	\item $\bar{x}$ is $<^j_{\mathfrak{m}}$-minimal;
	\end{enumerate}
	\item for every $\ell < \ell_*$ there are $\bar{y}^\ell = (y_{(\ell, i)} : i < j)$ such that:
	\begin{enumerate}
	\item\label{Ba} $y_{(\ell, i)} = : y^\ell_i \in X$ and $\bar{x} \leq^j_{\mathfrak{m}} \bar{y}^\ell$;
	\item $b_\ell = \sum \{ q_i y_{(\ell, i)} : i < j \}$, and so the $\bar{y}^\ell$'s are pairwise distinct;
	\item $(y_{(\ell, i)} : i < j)$ is injective and reasonable;
	\item there are $\ell_1 \neq \ell_1 < \ell_*$ and $i_1, i_2 < j$ such that:
	\begin{enumerate}[(i)]
	\item if $\ell < \ell_*$, $i < j$ and $y_{(\ell, i)} = y_{(\ell_1, i_1)}$, then $(\ell, i) = (\ell_1, i_1)$;
	\item if $\ell < \ell_*$, $i < j$ and $y_{(\ell, i)} = y_{(\ell_2, i_2)}$, then $(\ell, i) = (\ell_2, i_2)$.
	\end{enumerate}
	\item\label{claim9A_itemBf} $(y_{(\ell, j-1)} : \ell < \ell_*)$ is without repetitions.
	\end{enumerate}
	\end{enumerate}
\end{cclaim}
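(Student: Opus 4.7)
The plan is to track the support of $a$ through the chain of $\hat f^{(i,0)}_t$-applications witnessing $a\,\mathcal{E}_\mathfrak{m}\,b_\ell$. Since each such function sends $\sum_i q_i z_i$ to $\sum_i q_i f^i_t(z_i)$, a zig-zag chain from $a$ to $b_\ell$ defines, position by position, a tuple $\bar y^\ell = (y_{(\ell,i)}:i<j)$ with $b_\ell = \sum_i q_i y_{(\ell,i)}$, proving (Bb); injectivity of each $f^i_t$ transfers to injectivity of $\bar y^\ell$, and distinctness of the $b_\ell$'s forces distinctness of the $\bar y^\ell$'s. By construction $\bar x\,E_j\,\bar y^\ell$.

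To obtain (Ba), I claim that each $E_j$-connected component of $(\mrm{seq}_j(X),<^j_\mathfrak{m})$ has a unique $<^j_\mathfrak{m}$-minimum. Indeed, Claim~\ref{claim_for_star}(A) forces every $R_j$-edge to change $\mathbf n$ by exactly $\pm 1$; the tree property endows each non-root vertex with a unique $\mathbf n$-parent (two $R_j$-neighbours at level $\mathbf n(t)-1$ would be two incomparable elements below $t$, contradicting the well-ordering in Def.~\ref{def_trees}(1)); and the no-cycles axiom~\ref{hyp_A2}(\ref{A2_no_cycles}) then excludes two distinct minima in the same component, because the unique undirected path between them would contain a \emph{peak} whose two neighbours on the path would both serve as $\mathbf n$-parents. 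So the $<^j_\mathfrak{m}$-minimal $\bar x$ is the root of its component, whence $\bar x\le^j_\mathfrak{m}\bar y^\ell$ as required. Observation~\ref{observation_Xtree}(\ref{reasonable_implies_reasonable}) then propagates reasonability from $\bar x$ to each $\bar y^\ell$, completing (Bc). Item (Bf) is a direct application of Observation~\ref{observation_Xtree}(\ref{observation_Xtree_item8}): if $y_{(\ell,j-1)} = y_{(\ell',j-1)}$ then $\bar y^\ell = \bar y^{\ell'}$, contradicting the distinctness of the $b_\ell$'s.

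The main obstacle will be (Bd)(i)--(ii), which demands \emph{two} globally uniquely placed entries. My plan is to pick $\ell_1 \neq \ell_2$ both realising $\max\{\mathbf n(\bar y^\ell):\ell<\ell_*\}$ whenever this maximum is attained at least twice (using $\ell_*\ge 2$), and otherwise to pair the unique top-index with an index realising the second-highest value; in either case I set $i_1 = i_2 = j-1$. If an equality $y_{(\ell,i)} = y_{(\ell_k,j-1)}$ occurs with $i = j-1$, (Bf) already gives $\ell = \ell_k$. If instead $i < j-1$, reasonability of $\bar y^\ell$ together with the maximality of $\mathbf n(\bar y^{\ell_k})$ forces $\mathbf n(\bar y^\ell) = \mathbf n(\bar y^{\ell_k})$ and pins the tail of $\bar y^\ell$ from position $i$ onwards to this maximal $\mathbf n$-level; the uniqueness of the $<^j_\mathfrak{m}$-path from $\bar x$ (Claim~\ref{claim_for_star}(D)) combined with the strongly-reduced comparison of Claim~\ref{claim_for_star}(H) should then identify enough coordinates between $\bar y^\ell$ and $\bar y^{\ell_k}$ to force $\bar y^\ell = \bar y^{\ell_k}$, contradicting the distinctness of the $\bar y^{\ell'}$'s. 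I expect the delicate case---several tuples $\bar y^\ell$ sitting at the same maximal $\mathbf n$-level and sharing entries at non-terminal positions---to require the full strength of the no-cycles axiom~\ref{hyp_A2}(\ref{A2_no_cycles}); it is precisely because this scenario is \emph{a priori} possible that the statement insists on two globally unique positions rather than one.
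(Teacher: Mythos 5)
Your treatment of (Ba)--(Bc) and (Bf) matches the paper's: (Bf) is exactly Observation~\ref{observation_Xtree}(\ref{observation_Xtree_item8}), reasonability of the $\bar y^\ell$'s is exactly Observation~\ref{observation_Xtree}(\ref{reasonable_implies_reasonable}) combined with hypothesis~(A)(f), and the component-unique-root argument, while correct, is not needed beyond what (A)(f) hands you for free.

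The problem is (Bd), which is the point of the claim, and where your route diverges from the paper's. The paper splits on whether $\{\bar y^\ell : \ell < \ell_*\}$ is linearly ordered by $\leq^j_\mathfrak{m}$: in the non-linear case it takes two \emph{locally} $\leq^j_\mathfrak{m}$-maximal tuples and sets $i_1 = i_2 = j-1$; in the linear case it takes the $\leq^j_\mathfrak{m}$-\emph{smallest} tuple $\bar y^0$ with $i_1 = 0$ (the smallest entry) and the $\leq^j_\mathfrak{m}$-largest $\bar y^{\ell_*-1}$ with $i_2 = j-1$. You instead propose $i_1 = i_2 = j-1$ in all cases, pairing the top $\mathbf n$-level with itself or with the second-highest $\mathbf n$-level. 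Note that for a \emph{chain} $\bar y^0 <^j_\mathfrak{m} \cdots <^j_\mathfrak{m} \bar y^{\ell_*-1}$ the $\mathbf n$-values $\mathbf n(\bar y^\ell)$ are strictly increasing, so the maximal $\mathbf n$-level is attained exactly once and you fall into your ``second-highest'' branch with $\ell_1 = \ell_*-2$, $\ell_2 = \ell_*-1$, $i_1 = i_2 = j-1$. This is precisely the configuration the paper \emph{avoids} by moving $i_1$ to position $0$, and it is where your argument stops being a proof: you would have to rule out $y_{(\ell_*-1,\,i)} = y_{(\ell_*-2,\,j-1)}$ for some $i < j-1$, i.e.\ that the $j-1$ coordinate of $\bar y^{\ell_*-2}$ reappears as an internal coordinate of $\bar y^{\ell_*-1}$. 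Claim~\ref{claim_for_star}(D) and Observation~\ref{observation_Xtree}(\ref{observation_Xtree_item8}) pin down equality only when the \emph{last} coordinates coincide; they say nothing about a match between coordinate $j-1$ of one tuple and coordinate $i < j-1$ of another, and Claim~\ref{claim_for_star}(H) is about two representations of the \emph{same} functional value at the same $x$, not about different coordinate positions. Your text acknowledges this (``should then identify enough coordinates\ldots'', ``I expect the delicate case\ldots''), so the central step of (Bd) is left as a hope rather than an argument. Either supply the missing computation showing the internal-position match is impossible in the chain case, or switch to the paper's choice $(\ell_1, i_1) = (0,0)$, $(\ell_2, i_2) = (\ell_*-1, j-1)$ for the linearly-ordered case, for which the corresponding uniqueness statement is much cleaner (the smallest entry of the bottom tuple has strictly smaller $\mathbf n$-value than anything else in the matrix once one propagates reasonability through the chain).
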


\begin{proof} By the def. of $\leq_*$ there are $(y_{(\ell, i)} : i < j, \, \ell < \ell_*)$ satisfying clauses $(a)$-$(c)$ of $(B)$, e.g., $\bar{y}^\ell$ is reasonable by \ref{observation_Xtree}(\ref{reasonable_implies_reasonable}) and clause (A)(f). Also, clause (e) holds by \ref{observation_Xtree}(\ref{observation_Xtree_item8}), recalling that the $\bar{y}^\ell$'s are pairwise distinct. Recall now that $(\{\bar{y} \in \mrm{seq}_j(X): \bar{x} \leq^j_X \bar{y}\}, \leq^j_\mathfrak{m})$ is a tree. There are two cases. 
\newline \underline{Case 1}. $\{\bar{y}^\ell : \ell < \ell_* \}$ is not linearly ordered by $\leq^j_\mathfrak{m}$.
\newline  Then there are $\ell(1) \neq \ell(2) < \ell_*$ such that $\bar{y}^{\ell(1)}, \bar{y}^{\ell(2)}$ are locally $\leq^j_\mathfrak{m}$-maximal (i.e., with respect to $\{\bar{y}^\ell : \ell < \ell_* \}$). 
Using the assumption that the sequences are reasonable we can choose $i_1 = j-1 = i_2$, and then by \ref{observation_Xtree}(\ref{observation_Xtree_item8}) we are done.
\newline \underline{Case 2}. Not Case 1.
\newline So w.l.o.g. we have that, for every $\ell < \ell_*-1$, $\bar{y}^\ell <^j_\mathfrak{m}\bar{y}^{\ell+1}$. Let: 
\begin{enumerate}[$(\cdot_1)$]
	\item $i(1) < j$ be such that $i < j$ implies $\mathbf{n}(y^0_i) \geq \mathbf{n}(y^0_{i(1)})$;
	\item $i(2) < j$ be such that $i < j$ implies $\mathbf{n}(y^{\ell_* - 1}_{i(2)}) \geq \mathbf{n}(y^{\ell_* - 1}_{i})$.
\end{enumerate}
Then $(0, i(1))$, $(\ell_*-1, i(2))$ are as required. As, for $\ell < \ell_*$, $\bar{y}^\ell$ is assumed to be reasonable we can actually choose $i(1), i(2)$ such that  $i(1) = 0$ and $i(2) = j-1$. 
\end{proof}

	\begin{definition}\label{def_G1_cohopfian} Let $(p_a : a \in G^+_0)$ be a sequence of pairwise distinct primes s.t.:
$$a = \sum_{\ell < k} q_\ell x_\ell, \, q_\ell \in \mathbb{Z}^+, \, (x_\ell : \ell < k) \in \mrm{seq}_k(X) \Rightarrow p_a \not \vert \; q_\ell.$$ 
	\begin{enumerate}[(1)] 
	\item\label{P_a} For $a \in G^+_0$, let (recalling the definition of $\mathcal{E}_{\mathfrak{m}}$ from Definition~\ref{def_G02_cohopfian}(\ref{the_mathcal_E})):
	$$\mathbb{P}_a = \{ p_b : b \in G^+_0, b \in a/\mathcal{E}_{\mathfrak{m}} \}.$$ 
	\item\label{def_G1} Let $G_1 = G_1[\mathfrak{m}] = G_1[\mathfrak{m}(T)] = G_1[T]$ be the subgroup of $G_2$ generated by:
$$\{ m^{-1} a : a \in G^+_0, \; m \in \omega \setminus \{ 0 \} \text{ a product of primes from $\mathbb{P}_a$, poss. with repetitions}\}.$$
	\item\label{def_G1p} For a prime $p$, let $G_{(1, p)} = \{ a \in G_1 : a \text{ is divisible by $p^m$, for every $0 < m < \omega$}\}$ (notice that, by Observation~\ref{generalG1p_remark}, $G_{(1, p)}$ is always a pure subgroup of $G_1$).
\end{enumerate}
\end{definition}

	\begin{lemma}\label{lemma_pre_main_th}\begin{enumerate}[(1)]
	\item\label{G1p_pure_co-hop} If $p = p_a$, $a \in G^+_1$, then:
	$$G_{(1, p)} = \langle b \in G^+_0 : a \mathcal{E}_{\mathfrak{m}} b \rangle^*_{G_1}.$$
	\item For $t \in T$ and $i \in \{1, -1\}$, we let $H^{i}_{(1, t)} := H^i_{(2, t)} \cap G_1$ and $I^i_{(1, t)} := I^i_{(2, t)} \cap G_1$.
	\item\label{convention_hatf_G1} For $\hat{f}^{(i, 2)}_t$ as in Def.~\ref{def_G02_cohopfian}(\ref{hat_f}), $\hat{f}^{(i, 2)}_t [H^i_{(1, t)}] = I^i_{(1, t)}$. We let $\hat{f}^{(i, 1)}_t = \hat{f}^{(i, 2)}_t \restriction H^i_{(1, t)}$.
	\item For $\ell \in \{0, 1, 2\}$, if $s <_T t$, then $H^i_{(\ell, s)} \leq_* H^i_{(\ell, t)}$ (recall \ref{def_pure});
	\item\label{onto} For $\ell \in \{0, 1, 2\}$, if, for every $n < \omega$, $t_n <_T t_{n+1}$, then we have:
$$\bigcup_{n < \omega} H^i_{(\ell, t_n)} = \bigcup_{n < \omega} I^i_{(\ell, t_n)} = G_\ell.$$
	\item\label{onto+} $f^{(i, 1)}_t$ maps $H^i_{(1, t)}$ onto $I^i_{(1, t)}$.
\end{enumerate}
\end{lemma}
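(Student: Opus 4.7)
The plan is to dispatch items in the order (2), (3)/(6), (4), (5), (1), since (2) is only a notational definition, (3) and (6) state the same bijectivity statement, and (4)--(5) reduce to unravelling definitions; the substantive content is item (1).

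For (3)/(6), the map $\hat{f}^{(i,2)}_t$ is already a $\mathbb{Q}$-linear bijection $H^i_{(2,t)} \to I^i_{(2,t)}$, so it suffices to check it sends $G_1$-generators supported in $\mathrm{dom}(f^i_t)$ to $G_1$-generators supported in $\mathrm{ran}(f^i_t)$. A typical generator is $m^{-1}b$ with $b \in G^+_0$, $\mathrm{supp}(b) \subseteq \mathrm{dom}(f^i_t)$, and $m$ a product of primes from $\mathbb{P}_b$. Its image under $\hat{f}^{(i,2)}_t$ is $m^{-1}\hat{f}^{(i,0)}_t(b)$, and since the single step $\hat{f}^{(i,0)}_t$ witnesses $b \,\mathcal{E}_{\mathfrak{m}}\, \hat{f}^{(i,0)}_t(b)$, we have $\mathbb{P}_b = \mathbb{P}_{\hat{f}^{(i,0)}_t(b)}$, so the image is again a generator of $G_1$. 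The reverse inclusion is symmetric, obtained from $\hat{f}^{(-i,2)}_t$.

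For (4), Definition~\ref{hyp_A2}(\ref{item_fs_sub_ft}) gives $f_s \subseteq f_t$, hence $\mathrm{dom}(f^i_s) \subseteq \mathrm{dom}(f^i_t)$ and the subgroup containment $H^i_{(\ell,s)} \subseteq H^i_{(\ell,t)}$. For purity, if $nc = b$ with $c \in H^i_{(\ell,t)}$ and $b \in H^i_{(\ell,s)}$, then expanding $c$ in the $\mathbb{Q}$-basis $X$ gives $\mathrm{supp}(c) = \mathrm{supp}(nc) = \mathrm{supp}(b) \subseteq \mathrm{dom}(f^i_s)$, so $c$ lies in $H^i_{(\ell,s)}$ already. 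For (5), along a $<_T$-increasing chain $(t_n)$ we have $\mathbf{n}(t_n) \to \infty$, and Definition~\ref{hyp_A2}(\ref{def_of_f_t}) gives $X_{\mathbf{n}(t_n)-1} \subseteq \mathrm{dom}(f_{t_n}) \cap \mathrm{ran}(f_{t_n})$; hence both $H^i_{(2,t_n)}$ and $I^i_{(2,t_n)}$ contain $\bigoplus\{\mathbb{Q}x : x \in X_{\mathbf{n}(t_n)-1}\}$, whose union over $n$ is $G_2$, and intersecting with $G_\ell$ takes care of $\ell \in \{0,1\}$.

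Finally, for the substantive item (1): the inclusion $\supseteq$ is immediate, since $a \,\mathcal{E}_{\mathfrak{m}}\, b$ forces $p_a \in \mathbb{P}_b$ by Definition~\ref{def_G1_cohopfian}(\ref{P_a}), hence $p_a^{-k} b \in G_1$ for every $k$, so $b \in G_{(1,p_a)}$, and Observation~\ref{generalG1p_remark} (purity of $G_{(1,p_a)}$) gives the pure-closure containment. For $\subseteq$, the key input is the injectivity of the sequence $(p_b)_{b \in G^+_0}$, which upgrades Definition~\ref{def_G1_cohopfian}(\ref{P_a}) to the sharp equivalence ``$p_a \in \mathbb{P}_b$ iff $a \,\mathcal{E}_{\mathfrak{m}}\, b$''. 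Taking $c \in G_{(1,p_a)}$, I would write $c$ as a $\mathbb{Z}$-linear combination $\sum n_i m_i^{-1} b_i$ of generators of $G_1$, then use infinite $p_a$-divisibility of $c$ to argue that the contributions coming from those $b_i$ with $b_i \notin a/\mathcal{E}_{\mathfrak{m}}$ have $p_a$-bounded denominators and so must cancel among themselves, placing $c$ in the pure closure of $\langle b : a \,\mathcal{E}_{\mathfrak{m}}\, b\rangle$ inside $G_1$. The main obstacle is making this bookkeeping precise; I expect to proceed by induction on the number of distinct $\mathcal{E}_{\mathfrak{m}}$-classes appearing among the $b_i$'s in a fixed representation of $c$, at each stage peeling off a class $C \neq a/\mathcal{E}_{\mathfrak{m}}$ and using that $p_a$ is coprime to every prime in $\bigcup_{b \in C} \mathbb{P}_b$ to conclude that the $C$-contribution is, up to a $p_a$-power, in $G_0$ and therefore not infinitely $p_a$-divisible.
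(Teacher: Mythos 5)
Your overall plan and the dispatching of the easy items are reasonable, but the paper's own ``proof'' is really only a pointer: item (1) is deferred to \cite[5.17(1)]{1205}, and the rest is dismissed as easy using the fact $X_{n-1} = \mrm{dom}(f_t) \cap \mrm{ran}(f_t)$. So the useful thing to assess is whether your fill-in is actually correct, and here there are two genuine gaps.

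For (3)/(6) you claim it ``suffices to check'' that $\hat{f}^{(i,2)}_t$ sends $G_1$-generators supported in $\mrm{dom}(f^i_t)$ to $G_1$-generators supported in $\mrm{ran}(f^i_t)$. That shows $\hat{f}^{(i,2)}_t$ carries the subgroup generated by those particular generators into $G_1$, but $H^i_{(1,t)} = G_1 \cap H^i_{(2,t)}$ is not obviously equal to that subgroup: a $c \in G_1$ with $\mrm{supp}(c) \subseteq \mrm{dom}(f^i_t)$ could a priori be a $\mathbb{Z}$-combination $\sum_j n_j m_j^{-1} b_j$ in which some $b_j$ have support sticking out of $\mrm{dom}(f^i_t)$, with cancellation among those extra coordinates. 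You need to argue that every element of $H^i_{(1,t)}$ admits a representation in terms of generators whose $b_j$'s are \emph{already} supported in $\mrm{dom}(f^i_t)$ (essentially a purity/local-freeness statement for $G_1$ relative to the coordinate subspace $\mathbb{Q}^{(\mrm{dom}(f^i_t))}$). This is where the real content of the item sits; as written the reduction is asserted, not proved. The observation that $b\, \mathcal{E}_{\mathfrak{m}}\, \hat{f}^{(i,0)}_t(b)$ and hence $\mathbb{P}_b = \mathbb{P}_{\hat{f}^{(i,0)}_t(b)}$ is correct (using Lemma~\ref{G0_is_tree}(4) to see $\mathbf{n}(b)\neq\mathbf{n}(\hat{f}^{(i,0)}_t(b))$), so the rest of the step is fine once the reduction is in place.

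For (1), the $\supseteq$ direction is correct, and the $\subseteq$ direction is essentially the same ``peel off foreign $\mathcal{E}_\mathfrak{m}$-classes by exploiting disjointness of the prime sets $\mathbb{P}_b$'' idea that \cite[5.17(1)]{1205} carries out; but you explicitly flag that the bookkeeping is not done, so this is an acknowledged hole rather than a proof. Note also that filling in the $\subseteq$ direction of (1) and closing the gap in (3)/(6) are essentially the same exercise (both hinge on understanding which $G_1$-representations an element admits, via the primary decomposition of $G_1/G_0$ indexed by $\mathcal{E}_\mathfrak{m}$-classes), so it would be economical to isolate that structural lemma once and use it for both. Items (2), (4), (5) are handled correctly; in particular the purity argument for (4) via support comparison and the observation that $\mathbf{n}(t_n)$ is strictly increasing along a $<_T$-chain (so $\mrm{dom}(f^i_{t_n})\supseteq X_{\mathbf{n}(t_n)-1}$ exhausts $X$) are both sound.
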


	\begin{proof} (1) is proved similarly to \cite[5.17(1)]{1205}. The rest is easy, recalling that in \ref{hyp_A2}(\ref{def_of_f_t}) we ask that $X_{n-1} = \mrm{dom}(f_t) \cap \mrm{ran}(f_t)$.
\end{proof}

	\begin{notation} When we omit $i$ in objects that depend on $i$, such as $H^i_{(2, t)}$ etc., we mean that $i = 1$. So e.g. by $\hat{f}^2_{t}$ we mean $\hat{f}^{(1, 2)}_{t}$. This should be kept in mind.
\end{notation}

	\begin{theorem}\label{main_th_Sec1} Let $\mathfrak{m}(T) \in \mathrm{K}^{\mrm{ri}}(T)$.
	\begin{enumerate}[(1)]
	\item We can modify the construction so that $G_1[\mathfrak{m}(T)] = G_1[T]$ has domain $\omega$ and the function $T \mapsto G_1[T]$ is Borel (for $T$ a rooted tree with domain $\omega$).
	\item If $T$ is not well-founded then $G_1[T]$ has a non-trivial free autom. (cf. \ref{def_free_auto}).
	\item If $T$ is well-founded then $G_1[T]$ has only trivial onto endomorphisms.
\end{enumerate}
\end{theorem}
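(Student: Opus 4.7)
For part (1), I would run the existence proof of Claim~\ref{K_co_non-empty} in a canonical greedy fashion: at stage $n$, process the vertices of $T_n \setminus T_{<n}$ in the order inherited from $\omega$, and extend $X_{n-1}$ to $X_n$ together with the new partial functions $f_t$ by picking the least available elements of $\omega$ that fulfill Definition~\ref{hyp_A2}. The sequence $(p_a : a \in G_0^+)$ of Definition~\ref{def_G1_cohopfian} is chosen similarly via a canonical enumeration of $G_0^+$. After re-indexing the underlying set of $G_1$ by $\omega$ in a canonical way, addition and inverse in $G_1[T]$ are Borel functions of $T$.

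For part (2), fix an infinite branch $(t_n : n<\omega)$ through $T$ (possible since $T$ is not well-founded). By Lemma~\ref{G0_is_tree}(\ref{G0_is_tre_item4}) the maps $\hat f^{(1,1)}_{t_n}$ are compatible, and by Lemma~\ref{lemma_pre_main_th}(\ref{onto})--(\ref{onto+}) their domains and ranges exhaust $G_1$, so $\pi := \bigcup_n \hat f^{(1,1)}_{t_n}$ is an automorphism of $G_1$. Each $f_{t_n}$ is a bijection between subsets of $X$, and their union exhausts $X$ because $X_{n-1} \subseteq \mrm{dom}(f_t) \cap \mrm{ran}(f_t)$ for $\mathbf n(t) = n$ (Definition~\ref{hyp_A2}(\ref{def_of_f_t})) together with $\mathbf n(t_n) \to \infty$. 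Hence $\pi$ restricts to a permutation of $X$. As $X$ is maximal independent in $G_2 = \bigoplus \mathbb{Q} x$, it remains maximal independent in the pure subgroup $G_1$, and so is a basis in the sense of Definition~\ref{def_basis}; thus $\pi$ is free. Non-triviality: by Definition~\ref{hyp_A2}(\ref{item_no_fix_point})$(\beta)$, $f_{t_1}$ is nonempty and fixed-point-free, so $\pi \neq \mrm{id}$; and $\pi \neq -\mrm{id}$ because $\pi(X) = X$ while $(-\mrm{id})(X)$ is disjoint from $X$.

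Part (3) is the crux. Let $\pi \in \mrm{End}(G_1)$ be onto; I aim to show $\pi = \pm \mrm{id}$, in three stages. \emph{Stage A (support in a class).} Lemma~\ref{lemma_pre_main_th}(\ref{G1p_pure_co-hop}) identifies $G_{(1,p_x)}$ with the pure closure in $G_1$ of $x/\mathcal{E}_{\mathfrak{m}}$, and $\pi$ preserves $p_x$-divisibility, so the $X$-support of $\pi(x)$ lies in $x/\mathcal{E}_{\mathfrak{m}}$. \emph{Stage B (singleton support).} I claim $\pi(x) = \epsilon(x) \sigma(x)$ with $\sigma(x) \in X \cap x/\mathcal{E}_{\mathfrak{m}}$ and $\epsilon(x) \in \{\pm 1\}$. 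Else, write $\pi(x) = m^{-1} \sum_{i<k} q_i y_i$ with $\bar{y}$ reasonable and $<^k_{\mathfrak{m}}$-minimal. Applying Claim~\ref{claim9A} to the $\mathcal{E}_{\mathfrak{m}}$-class of $\sum q_i y_i$ (with the $b_\ell$'s ranging over enough class members) produces uniquely-occurring coordinates $(y_{(\ell_j, i_j)})_{j=1,2}$ whose associated primes $p_{y_{(\ell_j, i_j)}}$ must divide $\pi(x)$, yet their uniqueness forces them into $\mathbb{P}_b$ for at most one $b\, \mathcal{E}_{\mathfrak{m}}\, x$, contradicting the divisibility constraints inherited from $x$. \emph{Stage C (well-foundedness).} The induced $\sigma : X \to X$ is a bijection preserving $\mathcal{E}_{\mathfrak{m}}$-classes. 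Because $\pi$ respects the pure subgroups $H^1_{(1,t)}, I^1_{(1,t)}$ and the isomorphism between them, $\sigma$ commutes with every $f_t$ on $\mrm{dom}(f_t)$. If $\sigma \neq \mrm{id}$, iteratively applying the commutativity relation to an orbit of $\sigma$ would construct an infinite $<_T$-chain of vertices $t_0 <_T t_1 <_T \cdots$ supporting non-trivial action, contradicting well-foundedness of $T$; hence $\sigma = \mrm{id}$. Additivity of $\pi$ then forces the sign function $\epsilon$ to be globally constant, giving $\pi = \pm \mrm{id}$.

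The principal obstacle is Stage B: deriving a contradiction from the assumption that $\pi(x)$ has support of size $\geq 2$ in the basis $X$. Claim~\ref{claim9A} was manufactured precisely for this purpose, producing coordinates that occur uniquely across the $\mathcal{E}_{\mathfrak{m}}$-class, but wielding it correctly requires simultaneously tracking several class representatives and delicately verifying that the forced prime-divisibilities are incompatible with $\mathbb{P}_x$.
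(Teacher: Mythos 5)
Parts (1) and (2) match the paper's (short) argument. The issues are all in Part (3), where you deviate from the paper's logical structure in ways that leave real gaps.

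\textbf{Stage B claims too much, too early, and by the wrong mechanism.} You assert that $\pi(x)=\epsilon(x)\sigma(x)$ with $\epsilon(x)\in\{\pm1\}$. Even granting singleton support, there is no reason at this stage for the coefficient to be $\pm1$ rather than an arbitrary positive rational; in the paper this is obtained only at the very end (steps $(*_9)(c)$ and $(*_{12})$), and both steps lean essentially on ontoness of $\pi$ (e.g.\ a prime dividing $q_*$ would force some $x$ to be $p$-divisible in $G_1$ when it isn't). More seriously, your mechanism for deriving singleton support is not the paper's, and as sketched it does not go through. Claim~\ref{claim9A} is a statement about several $\mathcal E_{\mathfrak m}$-equivalent elements $b_\ell$ spreading out in the tree $(\mrm{seq}_k(X),<^k_{\mathfrak m})$; it does not by itself constrain $\pi(x)$. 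The paper uses it in the opposite direction: in $(*_2)$ one takes $a=\sum_{\ell<k}q_\ell x_\ell$ with $\pi(a)\in\mathbb Q^+x$ and, assuming $k\geq2$, shows the RHS of $(*_1)$ must have support of size $\geq 2$, contradiction. That is a statement about \emph{preimages} of singleton-support elements, not about the support of $\pi(x)$. The paper then builds a one-to-one $h:Y\to X$ ($(*_4)$), uses ontoness to force $Y=X$ ($(*_5)$), uses $(*_2)$ again to rule out $\pi(y)=0$ on $X\setminus\mrm{ran}(h)$ ($(*_6)$--$(*_7)$), and only \emph{then} concludes $\pi(x)=q_x h^{-1}(x)$ ($(*_8)$). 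Your ``divisibility constraints inherited from $x$'' are not pinned down, and since $\pi(x)\in G_{(1,p_x)}$, which is exactly the pure hull of $x/\mathcal E_{\mathfrak m}$, there is no a priori obstruction to $\pi(x)$ having support of size $\geq2$; the obstruction has to be manufactured via ontoness and the map $h$, which your argument does not do.

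\textbf{Stage C does not capture the case analysis.} You claim $\sigma$ commutes with every $f_t$ and that an orbit of a nontrivial $\sigma$ yields an infinite $<_T$-chain. Neither part is right as stated. The permutation $g=h^{-1}$ need not commute with the $f_t$'s on the nose (it is only constrained to send each $x$ to a member of $x/E_1$, cf.\ $(*_9)(d)$), and the orbit argument fails on its own: $\sigma$ could have a single finite non-trivial orbit without that producing a branch. The paper's actual argument for $(*_{11})$ splits into Case~1 (infinitely many $E_1$-classes moved by $g$), which uses strongly reduced representations and Claim~\ref{claim_for_star}(G)(H) to extract an infinite branch of $T$, and Case~2 (finitely many but some), which is handled by an entirely separate short contradiction using an $x_1$ outside the finitely many disturbed classes. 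Your sketch does not distinguish these and would not close the Case~2 scenario. In short, the intended strategy is recognizable, but the core of the proof (singleton support, $g=\mrm{id}$, and $q_*=\pm1$) is not actually carried out.
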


	\begin{proof} Item (1) is easy. Concerning item (2), let $(t_n : n < \omega)$ be an infinite branch of $T$ and $t_n <_T t_{n+1}$, so $t_{n+1} \notin T_n$. By Lemma~\ref{G0_is_tree}(\ref{G0_is_tre_item4}), $(\hat{f}^2_{t_n} : n < \omega)$ is increasing, by Definition~\ref{def_G02_cohopfian}(\ref{hat_f}), $\hat{f}^2_{t_n}$ is an isomorphism of $H_{(2, t_n)}$ onto $I_{(2, t_n)}$, thus $\hat{f}^2 = \bigcup_{n < \omega} \hat{f}^2_{t_n}$ is an isomorphism of $G_2$ onto $G_2$, since $(H_{(2, t_n)} : n < \omega)$ and $(I_{(2, t_n)} : n < \omega)$ are chain of pure subgroups of $G_2$ with limit $G_2$, because, by Definition~\ref{hyp_A2}(\ref{def_of_f_t}), we have:
	$$H_{(2, t_n)} \supseteq \mrm{dom}(f_{t_n}) \subseteq \mrm{dom}(f_{t_{n+1}}) \subseteq H_{(2, t_{n+1})},$$
	$$I_{(2, t_n)} \supseteq \mrm{ran}(f_{t_n}) \subseteq \mrm{ran}(f_{t_{n+1}}) \subseteq I_{(2, t_{n+1})}$$
and by \ref{lemma_pre_main_th}(\ref{onto}) we have that $\bigcup_{n < \omega} H_{(2, t_n)} = G_2 = \bigcup_{n < \omega} I_{(2, t_n)}$.
Thus, by \ref{lemma_pre_main_th}(\ref{onto+}):
$$\hat{f}^1 := \hat{f}^2 \restriction G_1 = \bigcup_{n < \omega} \hat{f}^1_{t_n} = \bigcup_{n < \omega} \hat{f}^2_{t_n} \restriction H_{(1, t_n)}$$ 
$$\hat{f}^{-1} = \bigcup_{n < \omega} \hat{f}^{(-1, 1)}_{t_n} = \bigcup_{n < \omega} \hat{f}^{(-1, 2)}_{t_n} \restriction H_{(1, t_n)}$$ 
and so $\hat{f}^1$ is an isomorphism of $G_1$ onto $G_1$. Clearly $\hat{f}^1$ is a non-trivial free automorphism of $G_1$ (cf. \ref{def_free_auto}), and so item (2) of the theorem is proved.

\medskip
\noindent We now prove item (3). Suppose that $(T, <_T)$ is well-founded and suppose that $\pi \in \mrm{End}(G_1)$ is onto, letting $G_1 = G_1[T]$. We shall show that $\pi \in \{\mrm{id_{G_1}}, - \mrm{id_{G_1}}\}$.

\noindent	Assume $k \geq 1$, $\bar{x} \in \mrm{seq}_k(X)$ and $q_\ell \in \mathbb{Q}^+$, for $\ell < k$ and $a = \sum_{\ell < k} q_\ell x_\ell$. Then:
	\begin{enumerate}[$(*_1)$]
	\item 
	\begin{enumerate}[(a)]
	\item There is $j < \omega$ and for every $i < j$ there is $\bar{x}^i \in \bar{x}/E_k$ and $q^i \in \mathbb{Q}^+$ such that the $\bar{x}^i$'s are pairwise distinct and we have:
	 $$\pi(\sum_{\ell < k} q_\ell x_\ell) = \sum_{i < j} q^i \sum_{\ell < k} q_\ell x^i_\ell;$$
	\item let $\bar{x}' \in \bar{x}/E_k$ be $<^k_{\mathfrak{m}}$-minimal, then, for every $i < j$, $\bar{x}' \leq^k_{\mathfrak{m}} \bar{x}^i$;
	\item for every $i < j$ there are $n_{i}, \bar{t}_{i}, \eta_{i}$ such that  $\bar{t}_{i} \in T^{n_{i}}, \eta_{i} \in \{1, -1\}^{n_{i}}$ and:
	$$\pi(\sum_{\ell < k} q_\ell x_\ell) = \sum \{ q^i q_\ell f_{(\bar{t}_{i}, \eta_{i})}(x'_\ell) : \ell < k, i < j\},$$
	where, as in \ref{notation_<_m}(\ref{f_eta_pre}), $f_{(\bar{t}_{i}, \eta_{i})} = (\cdots \circ f^{\eta_{i}(r)}_{t_{(i, r)}} \circ \cdots)_{r < n(i)}$, and if $r + 1 < n(i)$ and $\eta_{i}(r) = - \eta_{i}(r+1)$, then $t_{(i, r)} \neq t_{(i, r+1)}$ and $\mathbf{n}(t_{(i, r)})$ is minimal.
	\end{enumerate}
\end{enumerate}
[Why? Clause (a) is by the choice of $G_1$ and $\mathbb{P}_a$, for $a \in G^+_0$. \mbox{Clauses (b), (c) follow.]}
\begin{enumerate}[$(*_2)$]
	\item If $a \in G_1$, $x \in X$ and $\pi(a) \in \mathbb{Q}^+x$, then $\mrm{supp}(a)$ is a singleton.
\end{enumerate}
Why $(*_2)$? Let $a \in G^+_1$, $k \geq 1$, $\bar{x} \in \mrm{seq}_k(X)$ and $q_\ell \in \mathbb{Q}^+$, for $\ell < k$ and $a = \sum_{\ell < k} q_\ell x_\ell$. Let $p = p_a$, so $a \in G_{(1, p)}$ and thus $\pi(a) \in G_{(1, p)}$. Note that $a \neq 0$. Now apply $(*_1)$ to $a$, then there are $j < \omega$ and, for $i < j$, $\bar{x}^i$, $q^i$ such that:
	$$\pi(a) = \sum_{i < j} q^i \sum_{\ell < k} q_\ell x^i_\ell.$$
So, by our assumptions, for some $q \neq 0$, $\pi(a) = qx$, thus:
 	$$qx = \sum_{i < j} q^i \sum_{\ell < k} q_\ell x^i_\ell.$$
Now, the LHS of the equation has support a singleton, but, if $k \geq 2$, by \ref{claim9A}, the RHS of the equation has support with at least two members, so $k \leq 1$, and, as $a \neq 0$, necessarily, $k = 1$, as promised.

\begin{enumerate}[$(*_3)$]
	\item There are no $x \in X$ and $y_1 \neq y_2 \in X$ such that $\bigwedge_{\ell = 1, 2} \pi(y_\ell) \in \mathbb{Q}^+x$.
\end{enumerate}
[Why? As then for some $m \in \{1, 2\}$, $\pi(y_1 + my_2) \in \mathbb{Q}^+x$, contrary to $(*_2)$.]
\begin{enumerate}[$(*_4)$]
	\item Letting $Y = \{x \in X : \exists a \in G_1 \text{ such that } \pi(a) \in \mathbb{Q}^+x\}$, there is $1$-to-$1$ map $h: Y \rightarrow X$ such that $y \in Y \Rightarrow \pi(h(y)) \in \mathbb{Q}^+y$.
\end{enumerate}
[Why? By $(*_2)$ and $(*_3)$.]
\begin{enumerate}[$(*_5)$]
	\item $Y = X$.
\end{enumerate}
[Why? Because $\pi$ is onto, by $(*_2)$.]
\begin{enumerate}[$(*_6)$]
	\item If $y \in X \setminus \mrm{ran}(h)$, then $\pi(y) = 0$.
\end{enumerate}
Why $(*_6)$? Toward contradiction, assume $y \in X \setminus \mrm{ran}(h)$ and $\pi(y) \neq 0$. Let $\pi(y) = \sum_{\ell < k} q_\ell x_\ell$, $k \geq 1$, $q_\ell \in \mathbb{Q}^+$, $(x_\ell : \ell < k) \in \mrm{seq}_k(X)$. Let also, for $\ell < k$, $z_\ell = h(x_\ell)$. Clearly $(y)^\frown(z_\ell : \ell < k)$ is without repetitions, as $y \notin \mrm{ran}(h)$, $(x_\ell : \ell < k) \in \mrm{seq}_k(X)$ and $h$ is $1$-to-$1$. Notice also that by $(*_4)$, $\pi(z_\ell) = r_\ell x_\ell$, for some $r_\ell \in \mathbb{Q}^+$.
 Let now $q'_\ell = q_\ell/r_\ell$, if $\ell \in \{1, ..., k-1\}$, and $q'_0 \in \mathbb{Q} \setminus (\{q_0/r_0\} \cup \{0\})$. Let then $m > 1$ be such that $mq, mq_\ell, m q'_\ell \in \mathbb{Z}$ and consider:
	\begin{equation}
	\tag{$*_{6.1}$} a = my + \sum_{\ell < k} mq'_\ell z_\ell \in G^+_0.
\end{equation}
Then $a \in G_0 \subseteq G_1$ and we have:
$$\begin{array}{rcl}
\pi(a)    & = & \pi(my + \sum_{\ell < k} mq'_\ell z_\ell) \\
		& = & m\pi(y) + \sum_{\ell < k} mq'_\ell r_\ell x_\ell \\
		& = & m \sum_{\ell < k} q_\ell x_\ell + \sum_{\ell < k} mq'_\ell r_\ell x_\ell \\
		& = & m\sum_{\ell < k} (q_\ell - q'_\ell r_\ell) x_\ell \\
	    & = & m(q_0 - q'_0r_0) x_0.
\end{array}$$
As $q'_0 \neq q_0r_0$,  $\mrm{supp}(\pi(a))$ is a singleton, but $\mrm{supp}(a)$ is not a singleton, as $y, z_0 \in \mrm{supp}(a)$, recalling that $q'_0 \neq 0$ and $z \neq y$. But this contradicts $(*_2)$.
\begin{enumerate}[$(*_7)$]
	\item $X = \mrm{ran}(h)$.
\end{enumerate}
Why $(*_7)$? If not, let $x \in X \setminus \mrm{ran}(h)$, so $\pi(x) = 0$. Let now $y = h(x)$ and then let:
\begin{enumerate}[$(*_{7.1})$]
	\item $\pi(y) = \pi(h(x)) = rx$, for some $r \in \mathbb{Q}^+$, and let $p = p_{x+y}$.
\end{enumerate}
Then $\pi(x+y) \in \mathbb{Q}^+x$, as $\pi(x+y) = \pi(x) + \pi(y) = 0 + rx = rx$, recalling $(*_{7.1})$. But this contradicts $(*_2)$. This concludes the proof of $(*_{7})$.
\begin{enumerate}[$(*_{8})$]
	\item If $x \in X$, then $\pi(x) = q_x h^{-1}(x)$, where $q_x \in \mathbb{Q}^+$.
\end{enumerate}
[Why? Given $x \in X$, by $(*_7)$ we can choose $y \in X$ such that $h(y) = x$, hence by the choice of $h$ there is $q_*$ s.t. $\pi(h(y)) = q_xy$, which means $\pi(y) = q_x y = q_x h^{-1}(x)$.]
\begin{enumerate}[$(*_9)$]
	\item 
	\begin{enumerate}
	\item $g = h^{-1}$ is a permutation of $X$;
	\item for some $q_* \in \mathbb{Q}$, $x \in X \Rightarrow \pi(x) = q_*g(x)$;
	\item $q_* \in \mathbb{Z}^+$;
	\item if $y = g(x)$, then $y \in x/E_1$.
\end{enumerate}
\end{enumerate}
Why $(*_9)$? Clause (a) holds as $h: Y \rightarrow X$ is $1$-to-$1$ by $(*_4)$, $Y = X$ by $(*_5)$ and $h$ is onto $Y$ by $(*_7)$. If clause (b) fails, then for some $x_1 \neq x_2 \in X$ by $(*_{8})$ we have $\pi(x_\ell) = q_\ell g(x_\ell)$, for $\ell = 1, 2$ and $q_1 \neq q_2 \in \mathbb{Q}$.
Let $p = p_{x_1 - x_2}$, so $x_1 - x_2 \in G_{(1, p)}$ and thus:
	$$\pi(x_1 - x_2) = q_1g(x_1) - q_2g(x_2) \in G_{(1, p)}.$$
But $q_1-q_2 \neq 0$ and this contradicts $(*_{10})$ below. Concerning clause (c) suppose that $q_* \notin \mathbb{Z}$, then necessarily for some prime $p$ and $m \geq 1$ we have that $\frac{1}{p} = mq_*$. As $(p_a : a \in G^+_0)$ is with no repetitions, there is $x \in X$ such that $p_x \neq p$, hence $\frac{1}{p}x \in G_2 \setminus G_1$ (recalling \ref{def_G02_cohopfian}  and \ref{def_G1_cohopfian}), hence $mq_*x \notin G_1$, but $mq_*x = \pi(mx)$ and $mx \in G_1$, a contradiction. Finally, we prove clause (d). We have $\pi(x) \in \mathbb{Q}^+g(x)$, so by \ref{lemma_pre_main_th}(\ref{G1p_pure_co-hop}) there are $j_* < \omega$ and, for $j < j_*$, $y_{j} \in X$ and $q_{j} \in \mathbb{Q}^+$ such that $y_{j} \in x_i/\mathcal{E}_{\mathfrak{m}}$ and $\pi(x) = \sum_{j < j_*} q_{j} y_{j}$. Clearly, $j_* > 0$, as $\pi$ is an automorphism, in fact $j_* = 1$ and so $g(x) = y_{0}$, as $y_{0} \in x/E_1$. Thus $g(x) \in x/E_1$. 
This proves $(*_9)$.
\begin{enumerate}[$(*_{10})$]
	\item If $x_1 \neq x_2 \in X$, $p = p_{x_1 - x_2}$ and $\sum_{\ell < k} q_\ell y_\ell \in G_{(1, p)}$, then $\sum_{\ell < k} q_\ell = 0$.
\end{enumerate}
Why $(*_{10})$? By \ref{lemma_pre_main_th}(\ref{G1p_pure_co-hop}) and the choice of $p$, for some $m \in \mathbb{Z}^+$, $j < \omega$, $q^*_i \in \mathbb{Q}^+$ and $(y_{(i, 1)}, y_{(i, 2)}) \in (x_1, x_2)/E_2$ we have:
\begin{enumerate}[$(*_{10.1})$]
	\item $m(\sum_{\ell < k} q^*_\ell y_\ell) = \sum_{i < j} q_i(y_{(i, 1)} - y_{(i, 2)})$.
\end{enumerate}
Hence, letting $Y = \{y_{(i, 1)}, y_{(i, 2)} : i < j\}$, we have:
\begin{enumerate}[$(*_{10.2})$]
	\item $m(\sum_{\ell < k} q_\ell y_\ell) = \sum \{r_y y : y \in Y\}$, where we let:
	$$r_y = \sum \{q_i : i < j, y_{(i, 1)} = y \} - \sum \{q_i : i < j, y_{(i, 2)} = y \}.$$
\end{enumerate}
Hence, we have:
\begin{enumerate}[$(*_{10.3})$]
	\item 
	$$\begin{array}{rcl}
\sum_{\ell < k} m q_\ell & = & \sum \{r_y : y \in Y\} \\
		& = & \sum_{y \in Y} (\sum\{q_i : i < j, y_{(i, 1)} = y \} - \sum \{q_i : i < j, y_{(i, 2)} = y \}) \\
		& = & \sum \{q_i : i < j\} - \sum \{q_i : i < j\} = 0. 
\end{array}$$
\end{enumerate}
And, as $m \neq 0$, we are done. This concludes the proof of $(*_{10})$.

\smallskip
\noindent Now we continue similarly to the proof of \cite[Theorem~5.16]{1205}, specifically we claim:
\begin{enumerate}[$(*_{11})$]
	\item $g$ is the identity function on $X$.
\end{enumerate}
\underline{Case 1}. The set $Y = \{x/E_1 : \text{ for some } y \in x/E_1, \; g(y) \neq y\}$ is infinite.
\begin{enumerate}[$(\star_{1})$]
\item Choose $x_i$, $n_i$, for $i < \omega$, such that:
\begin{enumerate}
	\item $n_i$ is increasing with $i$;
	\item $x_i \in Y \cap X_{n_{i+1}} \setminus X_{n_i}$;
	\item $g(x_i)  \neq x_i$, $g(x_i) \in X_{n_{i + 1}}$, $g(x_i) \notin X_{n_i}$, $(x_i/E_1) \cap X_{n_i} = \emptyset$;
	\item $(x_i/E_1 : i < \omega)$ are pairwise distinct (this actually follows);
	\item there is $\bar{t} \subseteq T_{n_{i+1}}$ and $\eta$ s.t. $f_{(\bar{t}, \eta)}(x_i) = g(x_i)$.
\end{enumerate}
\end{enumerate}
[Why we can do this? By the case assumption and $(*_8)$, in particular $(*_8)$(d).]
\newline Note now that, for $i < \omega$, we have:
\begin{enumerate}[$(\star_{2})$]
	\item $g(x_i)\in x_i/E_1$.
\end{enumerate}
[Why $(\star_{2})$? By $(*_9)$(d).]
\begin{enumerate}[$(\star_{3})$]
	\item The sequence $(x_i, g(x_i) : i < \omega)$ is with no repetitions.
\end{enumerate}
Why $(\star_{3})$? $x_i, g(x_i) \in X$, $g(x_i) \neq x_i$, by the choice of $x_i$. Also, if $i < j$, then:
	\begin{enumerate}[$(\cdot_1)$]
	\item $x_i/E_1 \neq x_j/E_1$ (by $(\star_{1})$(d));
	\item $g(x_i) \in x_i/E_1$ (by $(\star_{2})$);
	\item $g(x_j) \in x_j/E_1$ (by $(\star_{2})$).
	\end{enumerate}
Together we are done proving $(\star_{3})$.

\smallskip
\noindent
\begin{enumerate}[$(\star_{4})$]
\item For each $2 \leq r < \omega$, let:
$$x^+_r = \sum_{\ell \leq r} x_\ell, \;\;\;\;\; p_r = p^+_{x_r}, \;\;\;\;\; \bar{x}_r = (x_\ell : \ell \leq r) \in \mrm{seq}_{r+1}(X).$$
\end{enumerate}

\smallskip
\noindent
As $\pi \in \mrm{End}(G_1)$, clearly $\pi(x^+_r) \in G_{(1, p_r)}$, hence by \ref{lemma_pre_main_th}(\ref{G1p_pure_co-hop}) applied to $x_r$, $p_{r}$ here standing for $a$, $p_a$ there we can find $j_r, m_r >0$, and, for $j < j_r$, $\bar{y}^{(r, j)} \in \mrm{seq}_{r+1}(X)$, $q^r_j \in \mathbb{Q}^+$, $b^r_j \in G^+_1$ such that the following holds:
\begin{enumerate}[$(\star_{5})$]
	\item 
	\begin{enumerate}[(a)]
	\item for $j < j_r$, $\bar{y}^{(r, j)} \in \bar{x}_r/E_{r+1}$;
	\item $(b^r_j = \sum_{\ell \leq r} y^{(r, j)}_\ell : j < j_r)$ is linearly independent;
	\item $\pi(x^+_r) = \sum \{q^r_j b^r_j : j < j_r \}$;
	\item for $j < j_r$, $m_r x^+_r \mathcal{E}_{\mathfrak{m}} b^r_j$ (and $m_r x^\star_r \in G_0$, $m_r b^r_j \in G_0$).
\end{enumerate}
\end{enumerate}
But by $(*_8)$ we have:
	\begin{enumerate}[$(\star_{6})$]
	\item $\pi(x^+_r) = \pi(\sum_{\ell \leq r} x_\ell) = \sum_{\ell \leq r} \pi(x_\ell) = \sum_{\ell \leq r} q^*g(x_\ell)$.
	\end{enumerate}
Hence, by $(\star_{5})$,  noticing that $(x_\ell/E_1 : \ell \leq r)$ is with no repetitions and that $y^{(r, j)}_\ell \in x_\ell/E_1$,
we have:
	\begin{enumerate}[$(\star_{7})$]
	\item $j_r = 1$.
	\end{enumerate}
	\begin{enumerate}[$(\star_{8})$]
	\item Let $(\bar{t}_r, \eta_r) \in T^{n_r} \times \{1, -1\}^{n_r}$ be such that:
	$$f_{(\bar{t}_r, \eta_r)}(\bar{x}_r) = \bar{y}^{(r, 0)} \; \text{ and  w.l.o.g. $n_r$ is minimal}.$$
	\end{enumerate}
Hence, we have:
\begin{enumerate}[$(\star_{9})$]
	\item If $\ell \leq r$, then $f_{(\bar{t}_r, \eta_r)}(x_\ell) = g(x_\ell)$.
	\end{enumerate}
Now, applying \ref{claim_for_star}(\ref{claim_for_starE}), to $(\bar{t}, \eta_r, x_r)$ we get:
\begin{enumerate}[$(\star_{10})$]
	\item There are $(\bar{t}'_r, \eta'_r)$ such that they satisfy:
	\begin{enumerate}
	\item $f_{(\bar{t}'_r, \eta'_r)}(x_r) = g(x_r)$;
	\item $(\bar{t}'_r, \eta'_r)$ is strongly reduced for $x_r$.
	\end{enumerate}
	\end{enumerate}
	Recall that, by $(*_{11})$(c), $(X_r/E_1) \cap X_{n_r} = \emptyset$, hence by \ref{claim_for_star}(\ref{claim_for_starG}) and $(*_{10})$(a), $X_{n_r} \subseteq \mrm{dom}(f_{(\bar{t}'_r, \eta'_r)})$, and by \ref{claim_for_star}(\ref{claim_for_starH}), $f_{(\bar{t}'_r, \eta'_r)} \restriction X_{n_r} = f_{(\bar{t}_r, \eta_r)} \restriction X_{n_r}$, so $\{x_i : i \leq r \} \subseteq \mrm{dom}(f_{(\bar{t}'_r, \eta'_r)})$, and thus we have:
	\begin{enumerate}[$(\star_{11})$]
		\item $f_{(\bar{t}'_r, \eta'_r)}$ and $f_{(\bar{t}_r, \eta_r)}$ agree on $\{x_\ell : \ell \leq r\}$.
	\end{enumerate}
	Recall now that $n_r$ was chosen to be minimal (cf. $(\star_{8})$) and so by $(\star_{11})$ we have that $n_r = \mrm{lg}(\bar{t}'_r)$. Hence, by \ref{claim_for_star}(\ref{claim_for_starH}), we have the following:
	\begin{enumerate}[$(\star_{12})$]
		\item $(\bar{t}'_r, \eta'_r) = (\bar{t}_r, \eta_r)$, so $(\bar{t}_r, \eta_r)$ is strongly reduced for $x_r$.
	\end{enumerate}
	Further, we have:
	\begin{enumerate}[$(\star_{14})$]
		\item if $r < s < \omega$, then for all $\ell < n_r$ there is $j < n_s$ such that:
$$t_{(r, \ell)} \leq_T t_{(s, j)} \text{ and } t_{(s, \ell)} \notin X_{n_r},$$
where we let $\bar{t}_r = (t_{(r, \ell)} : \ell < n_r)$ and $\bar{t}_s = (t_{(r, \ell)} : \ell < n_s)$.
	\end{enumerate}
	[Why? By \ref{claim_for_star}(\ref{claim_for_starH}): as $(\bar{t}_r, \eta_r)$ is strongly reduced for $x_r$, $f_{(\bar{t}_r, \eta_r)} = g(x_r) = f_{(\bar{t}_s, \eta_s)}$ and $x_r /E_1 \cap X_{n_r} = \emptyset$.]
	\begin{enumerate}[$(\star_{15})$]
	\item For $r < \omega$, we can choose $\ell_r < n_r$ such that we have:
	$$t_{(r, \ell_r)} \leq_T t_{(r+1, \ell_{r+1})} \text{ and } t_{(r, \ell_r)} \notin X_{n_{r}}.$$
\end{enumerate}
	[Why? Choose $\ell_r$ by induction on $r$ using $(\star_{14})$.]
\newline This implies that $T$ has an infinite branch, thus concluding the proof of Case 1.
	
\medskip
\noindent
\underline{Case 2}. The set $Y = \{x/E_1 : \text{ for some } y \in x/E_1, g(y) \neq y\}$ is finite and $\neq \emptyset$.
\newline Notice that by $(*_8)$ we have that $\pi(x) \notin \mathbb{Q}^+x$ iff $g(x) \neq x$. Now, choose $x_0 \in X$ such that $\pi(x_0) \notin \mathbb{Q}x_0$.
\noindent Let $n < \omega$ be such that $x_0 \in X_n$ and choose $x_1$ such that:
	\begin{enumerate}[$(\oplus_1)$]
	\item 
	\begin{enumerate}
	\item $x_1 \in X \setminus \bigcup\{y/E_1 : y \in X_n\}$;
	\item $\pi(x_1) \in \mathbb{Q}x_1$.
	\end{enumerate}
\end{enumerate}
[Why possible? By the assumption in Case 2 and \ref{hyp_A2}(\ref{itemh_co}).]
\newline Notice now that:
\begin{enumerate}[$(\oplus_2)$]
	\item For $\ell = 0, 1$ we have $\mrm{supp}(\pi(x_\ell)) \subseteq x_\ell/E_1$.
\end{enumerate}
\begin{enumerate}[$(\oplus_3)$]
	\item By \ref{lemma_pre_main_th}, there are $(\bar{t}_j, \eta_j, q_j : j < j_*)$ such that:
	\begin{enumerate}[(a)]
	\item $j_* < \omega$, $(\bar{t}_j : j < j_*)$ is injective and, for $j < j_*$, $q_j \in \mathbb{Q}^+$ and $\eta_j \in \{1, -1\}^{\mrm{lg}(\bar{t}_j)}$;
	\item $\pi(x_0 + x_1) = \sum \{q_j f_{(\bar{t}_j, \eta_j)}(x_0 + x_1) : j < j_*\}$.
	\end{enumerate}
\end{enumerate}
\begin{enumerate}[$(\oplus_4)$]
	\item As in $(\star_5)$ in Case 1, we have:
	\begin{enumerate}[(a)]
	\item $\pi(x_0) = \sum \{q_j f_{(\bar{t}_j, \eta_j)}(x_0) : j < j_*\}$;
	\item $\pi(x_1) = \sum \{q_j f_{(\bar{t}_j, \eta_j)}(x_1) : j < j_*\}$.
	\end{enumerate}
\end{enumerate}
\begin{enumerate}[$(\oplus_5)$]
	\item $\pi(x_0 + x_1) = \pi(x_0) + \pi(x_1) = q_*g(x_0) + q_*g(x_1) = q_*g(x_0) + q_*x_1$
\end{enumerate}
By $(\oplus_4)$+$(\oplus_5)$, recalling $x_1 \notin x_0/E_1$, clearly $j_* = 1$, but by $(\oplus_4)$(b)+$(\oplus_5)$ we have $f_{(\bar{t}_0, \eta_0)}(x_1) = x_1$, hence $\bar{t}_0 = ()$ and so $f_{(\bar{t}_0, \eta_0)}(x_0) = x_0$, leading to a contradiction.

\smallskip
\noindent
\underline{Case 3}. The set $Y = \{x/E_1 : \text{ for some } y \in x/E_1, \pi(y) \notin \mathbb{Q}y\}$ is empty.
\newline As Case 1 and Case 2 are not possible, we are left with Case 3, so $(*_{11})$ holds.
\begin{enumerate}[$(*_{12})$]
	\item $q_* \in \{1, -1\}$.
\end{enumerate}
Why $(*_{12})$? If not, recalling $q_* \in \mathbb{Z}^+$ by $(*_8)(c)$, let $p$ be a prime dividing $q_*$. 
\begin{enumerate}[$(*_{12.1})$]
	\item There is $x \in X$ such that $G_1 \models p \not \vert \; x$.
\end{enumerate} 
[Why? If $p \not\in \{p_a : a \in G^+_0\}$, then by \ref{def_G1_cohopfian}(\ref{def_G1}) any $x \in X$ is OK. Suppose then that $p = a$ and let $Y = \{y/E_1 : y \in \mrm{supp}(a)\}$. As $\mrm{supp}(a)$ is finite, clearly $Y \neq X$ and so it suffices to choose any $x \in X \setminus Y$ and then recall \ref{def_G1_cohopfian}(\ref{def_G1}).]
\newline As $\pi \in \mrm{End}(G_1)$ is onto, there is $b \in G_1$ such that $\pi(b) = x$. By $(*_{11})$ necessarily $b \in \mathbb{Q}^+x$, so let $b = q_0x \in G_1$. Then we have:
	$$\pi(b) = \pi(q_0x) = q_*q_0 x = x,$$
so $q_0q_* = 1$. But, by $(*_9)$(c), $q_* \in \mathbb{Z}^+$ and $p$ divides $q_*$, so necessarily we have that:
$$q_0 \notin \{\frac{m}{n} : m \in \mathbb{Z}, n \geq 1, p \not \vert \, n\}.$$
But in $G_1$ we have that $x$ is not divisible by $p$, a contradiction.
\begin{enumerate}[$(*_{13})$]
	\item $\pi \in \{ \mrm{id}_{G_1}, -\mrm{id}_{G_1} \}$.
\end{enumerate}
[Why? By $(*_{9})$, $(*_{11})$ and $(*_{12})$.]
\end{proof}

	\begin{proof}[Proof of Theorem~\ref{main_th4}] By \ref{main_th_Sec1}.
\end{proof}

\section{The Hopfian problem for countable $\mrm{TFAB}$}\label{sec_hop}

\begin{hypothesis}\label{hyp_A1_ho} Throughout this section the following hypothesis stands:
	\begin{enumerate}[(1)]
	\item\label{omega_levels} $T = (T, <_T)$ is a rooted tree with $\omega$ levels;
	\item we denote by $\mathrm{lev}(t)$ the level of~$t$;
	\item $T = \bigcup_{n < \omega} T_n$, $T_n \subseteq T_{n+1}$, $|T_{n+1} \setminus T_n| \geq 2$, and $t \in T_n$ implies that $\mathrm{lev}(t) \leq n$;
	\item $T_0 = \{t_0 \}$, where $t_0$ is the root of $T$, and, for $n < \omega$, $T_n$ is finite; 
	\item $T_{<n} = \bigcup_{\ell < n} T_\ell$ (so $T_{<(n+1)} = T_n$);
	\item if $s <_T t \in T_n$, then $s \in T_{< n}$;
	\item for $t \in T$, we let $\mathbf{n}(t)$ be the unique $0 \leq n < \omega$ such that $t \in T_n \setminus \bigcup_{\ell < n} T_{\ell}$.
\end{enumerate}
\end{hypothesis}

	\begin{definition}\label{hyp_A2_ho} Let $\mathrm{K}^{\mrm{ho}}(T)$ be the class of objects:
	$$\mathfrak{m}(T) = \mathfrak{m} = (X^T_n, \bar{f}^T_n: n < \omega) = (X_n, \bar{f}_n : n < \omega)$$
satisfying the following requirements:
	\begin{enumerate}[(a)]
	\item\label{X0_not_empty_ho} $X_0 \neq \emptyset$, $|X_n| < \aleph_0$, $X_n \subsetneq X_{n+1}$, $X_{< n} = \bigcup_{\ell < n} X_\ell$ and $X = \bigcup_{n < \omega} X_n$;
	\item $\bar{f} = (f_t : t \in T)$ and $\bar{g} = (g_t : t \in T)$;
	\item\label{def_of_f_t_ho} if $n >0$ and $t \in T_n \setminus T_{< n}$, then $f_t$ is a partial functions from $X_{n}$ onto $X_{n-1}$ and $f_t$ maps $\mrm{dom}(f_t) \setminus X_{n-1}$ onto $X_{n-1}$;
	\item if $s <_T t \in T_n$, then $f_s \subseteq f_t$; 
    \item if $s <_T t$ and $s \in T_{n+1} \setminus T_n$, then $\mrm{dom}(f_t) \cap X_{n+1} = \mrm{dom}(f_s)$;
	\item\label{item_g_hopfian_tree} if $t \in T_n \setminus T_{< n}$, then $\mrm{dom}(f_t) \cap X_{n-1} = \bigcup \{\mrm{dom}(f_s) : s <_T t\}$;
	\item\label{item_h_hopfian_tree} if $s, t \in T_n$ and $x \in \mrm{dom}(f_s) \cap \mrm{dom}(f_t)$, then for some $r \in T_n$ such that $r \leq_T s, t$ we have that $x \in \mrm{dom}(f_r)$, equivalently, $\mrm{dom}(f_s) \cap \mrm{dom}(f_t) = \mrm{dom}(f_r)$, for $r = s \wedge t$, where $\wedge$ is the natural semi-lattice operation taken in the tree.
	\item\label{hyp_A2_ho_mapped_to_0} $X_{n+1} \supsetneq X_n \cup \bigcup \{\mrm{dom}(f_t) : t \in T_{n+1} \setminus T_n\}$;
	\item for every $x$ and $y$, $\{t \in T : f_t(x) = y \}$ is a cone of $T$;
	\item for every $s, t \in T$, $\{x \in X : f_s(x) = f_t(x) \}$ is $\mrm{dom}(f_{s \wedge t})$;
	\item\label{thezs} for $n < \omega$ and $t \in T_n \setminus T_{< n}$, letting $Z_n = \bigcup_{k \leq |X_{< n}|} \mrm{seq}_k(X_{< n})$ (so a finite set), we can find $(\bar{z}_{(n, t, \bar{x})} : \bar{x} \in \mrm{seq}(Z_n))$ such that the following conditions are met:
	\begin{enumerate}[$(\cdot_1)$]
	\item $\bar{z}_{(n, t, \bar{x})} \in \mrm{seq}_{\mrm{lg}(\bar{x})}(\mrm{dom}(f_t) \setminus X_{< n})$; 
	\item $(\mrm{ran}(\bar{z}_{(n, t, \bar{x})}) : \bar{x} \in \mrm{seq}(Z_n), t \in T_n \setminus T_{n-1})$ is a seq. \mbox{of pairwise disjoint sets;}
	\item $f_t(\bar{z}_{(n, t, \bar{x})}) = \bar{x}$.
	\end{enumerate}
\end{enumerate}
\end{definition}

	\begin{remark} Notice that \ref{hyp_A2_ho}(\ref{thezs}) requires $X_n \setminus X_{n-1}$ to be large enough.
\end{remark}

	\begin{cclaim} For $T$ as in Hypothesis~\ref{hyp_A1_ho}, $\mathrm{K}^{\mrm{ho}}(T) \neq \emptyset$ (cf. Definition~\ref{hyp_A2_ho}).
\end{cclaim}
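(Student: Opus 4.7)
The plan is to build $\mathfrak{m}$ by recursion on $n$, producing finite sets $X_n$ together with partial functions $(f_t : t \in T_n)$ so that all clauses (a)--(k) of Definition~\ref{hyp_A2_ho} hold for the initial segment $T_n$. The base stage is trivial: take $X_0 = \{0\}$ and $f_{t_0} = \emptyset$, for which every nonempty requirement is vacuous.

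For the inductive step, assume $X_{\leq n}$ and $(f_t : t \in T_n)$ are defined. Fix $t \in T_{n+1} \setminus T_n$. Since $\{s \in T : s <_T t\}$ is well-ordered by Definition~\ref{def_trees}(1) hence a chain, and by the inductive version of clause (d) the family $(f_s : s <_T t)$ is $\subseteq$-nested, the union $f_t^{\mrm{old}} := \bigcup\{f_s : s <_T t\}$ is a well-defined partial function from $X_n$ into $X_{n-1}$; this will be $f_t \restriction X_n$, forcing clause (f). Now, taking fresh symbols, introduce for each $t \in T_{n+1} \setminus T_n$ and each $\bar{x} \in Z_{n+1} = \bigcup_{k \leq |X_n|} \mrm{seq}_k(X_n)$ a sequence $\bar{z}_{(n+1,t,\bar{x})}$ of length $\mrm{lg}(\bar{x})$, with ranges pairwise disjoint over all triples $(n+1,t,\bar{x})$ and disjoint from $X_n$; add one extra ``dummy'' element $\ast$; set $X_{n+1}$ to be $X_n \cup \{\ast\}$ together with the union of these ranges; and extend each $f_t$ by $f_t(\bar{z}_{(n+1,t,\bar{x})}(i)) := \bar{x}(i)$. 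Clauses (a), (b), (c), (d), (f), (h), (k) then follow directly (surjectivity onto $X_n$ in (c) uses the singleton $\bar{x}$'s; the dummy $\ast$ provides the strict inclusion in (h)), and clause (e) follows automatically at the subsequent stage from (d) and (f): for $s' \in T_{n+1} \setminus T_n$ with $s' <_T t'$, the set $\bigcup \{\mrm{dom}(f_{s''}) : s'' <_T t'\}$ collapses by nesting to $\mrm{dom}(f_{s'})$.

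The heart of the argument is preserving clause (g): given $s, t \in T_{n+1}$ and $x \in \mrm{dom}(f_s) \cap \mrm{dom}(f_t)$, show $x \in \mrm{dom}(f_{s \wedge t})$. I would split on $x$. If $x \in X_{n+1} \setminus X_n$ then by our pairwise disjointness $x$ lies in a unique fresh range $\mrm{ran}(\bar{z}_{(n+1,t^*,\bar{x})})$, forcing $s = t = t^*$ and making (g) trivial. If $x \in X_n$ then by clause (f) there are $s_0, t_0 \in T_n$ with $s_0 \leq_T s$, $t_0 \leq_T t$ and $x \in \mrm{dom}(f_{s_0}) \cap \mrm{dom}(f_{t_0})$; the inductive (g) gives $x \in \mrm{dom}(f_{s_0 \wedge t_0}) \subseteq \mrm{dom}(f_{s \wedge t})$, using (d) and the tree identity $s_0 \wedge t_0 \leq_T s \wedge t$. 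Once (g) is in hand, (j) is immediate ($f_s(x) = f_t(x)$ implies $x$ lies in both domains, hence in $\mrm{dom}(f_{s \wedge t})$ on which $f_s = f_{s \wedge t} = f_t$), and (i) follows because $\{t : f_t(x) = y\}$ is upward-closed by (d) and $\wedge$-closed by (g); indeed, any non-initial, non-dummy $x \in X$ is inserted at a unique stage as a member of a unique fresh range $\mrm{ran}(\bar{z}_{(\mathbf{n}(x),t^*,\bar{x})})$, so $\{t : f_t(x) = y\}$ is either empty or equals the cone $\{t : t \geq_T t^*\}$ for the unique matching $y$. This completes the induction and exhibits $\mathfrak{m} \in \mathrm{K}^{\mrm{ho}}(T)$.
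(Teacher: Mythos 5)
The paper gives no proof for this claim (it is stated and then the text moves directly to the next Convention), so there is nothing to compare against; the authors evidently regard it as routine. Your recursive construction is correct and is surely the intended argument: build $X_{n+1}$ from $X_n$ by adjoining, for each new $t$ and each admissible $\bar{x}$, a fresh block $\bar{z}_{(n+1,t,\bar{x})}$ with pairwise-disjoint ranges, set $f_t \restriction X_n$ equal to the nested union of the predecessors' maps, send the fresh elements to $\bar{x}$, and throw in a spare element to get the strict inclusion in clause (h). The key points --- surjectivity onto $X_n$ coming from the singleton $\bar x$'s, and clause (g) being preserved by splitting on whether $x$ is fresh or inherited --- are handled correctly, and clauses (i) and (j) do fall out once (d) and (g) are in place, exactly as you argue.

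One small imprecision worth flagging: in your treatment of clause (e) you say the set $\bigcup\{\mrm{dom}(f_{s''}) : s'' <_T t'\}$ ``collapses by nesting to $\mrm{dom}(f_{s'})$.'' As written that is not true --- that union is $\mrm{dom}(f_{\hat s})$ where $\hat s$ is the immediate predecessor of $t'$, which generally properly contains $\mrm{dom}(f_{s'})$. What you actually want is that this union, \emph{intersected with} $X_{n+1}$, equals $\mrm{dom}(f_{s'})$; and verifying that requires invoking clause (e) inductively for the pairs $(s', s'')$ with $s' <_T s''$ and $\mathbf{n}(s'') < \mathbf{n}(t')$, not merely (d) and (f). Since the predecessors of $t'$ form a finite chain and those instances of (e) were established at the earlier stages, the induction closes, so the claim is right even though the justification as you stated it is slightly off.
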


\begin{convention}\label{the_m_convention_ho} $\mathfrak{m} = (X_n, \bar{f}_n : n < \omega) \in \mathrm{K}^{\mrm{ho}}(T)$.
\end{convention}

\begin{definition}\label{def_<X_ho} On $X$ (cf. Convention~\ref{the_m_convention_ho}) we define:
	\begin{enumerate}[(1)]
	\item for $\bar{x} \in \mrm{seq}_k(X)$, we let:
	$$\mrm{suc}(\bar{x}) = \{\bar{y} \in \mrm{seq}_k(X) : f_t(\bar{y}) = \bar{x}, \text{ for some } t \in T \text{ s.t. } \bar{x} \subseteq \mrm{dom}(f_t) \},$$
	\item we let $\bar{x} \leq^k_\mathfrak{m} \bar{y}$ if $\bar{x}, \bar{y} \in \mrm{seq}_k(X)$ and there is a sequence $(\bar{x}_\ell : \ell \leq n)$ such that $\bar{x}_0 = \bar{x}$, $\bar{x}_n = \bar{y}$ and for every $\ell < n$, $\bar{x}_{\ell+1} \in \mrm{suc}(\bar{x}_\ell)$;
	\item If $k = 1$, we may simply write $\leq_{\mathfrak{m}}$ instead of $\leq^1_{\mathfrak{m}}$.
	\end{enumerate}
\end{definition}

	\begin{cclaim}\label{observation_Xtree_ho}
	\begin{enumerate}[(1)]
	\item $(X, <_{\mathfrak{m}})$ is a tree with $\omega$ levels;
	\item if $z \in X_0$, then $z$ is a root of the tree $(X, <_{\mathfrak{m}})$;
	\item if $x \in X_{n+1} \setminus X_n$, then $x$ is of level $\leq n+1$ and for at most one $y \in X_n$ we have that $x \in \mrm{suc}(y)$;
	\item if $f_{t_1}(x)$ and $f_{t_2}(x)$ are well-defined, then they are equal;
	\item $(\mrm{seq}_k(X), <^k_{\mathfrak{m}})$ is a tree with $\omega$ levels;
	\item if $\bar{x} \leq^k_{\mathfrak{m}} \bar{y}$, then there is exactly one pair $(\mathbf{x}, \bar{t})$ such that for a unique $n$ we have:
	\begin{enumerate}
	\item $\mathbf{x} = (\bar{x}_\ell : \ell \leq n)$, $\bar{x}_\ell \in \mrm{seq}_k(X)$ and $(\bar{x}_0, \bar{x}_n) = (\bar{x}, \bar{y})$;
	\item $\bar{t} = (t_\ell : \ell < n)$, $t_\ell \in T$;
	\item $f_{t_\ell}(\bar{x}_{\ell+1}) = \bar{x}_\ell$, for $\ell < n$;
	\item if $t < t_\ell$, then $f_t(\bar{x}_{\ell+1})$ is not well-defined.
	\end{enumerate}
	\item\label{observation_Xtree_ho7} For every $n$ there is $x \in X_{n+1} \setminus X_n$ which is $<_{\mathfrak{m}}$-minimal.
	\end{enumerate}
\end{cclaim}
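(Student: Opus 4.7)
The plan is to peel off item (4) first, then use it to derive the uniqueness facts (3) and (6), and from there the tree structure (1) and (5). Items (2) and (7) follow by separate base-case/counting arguments. To prove (4), suppose $x \in \mrm{dom}(f_{t_1}) \cap \mrm{dom}(f_{t_2})$ and set $n = \max(\mathbf{n}(t_1), \mathbf{n}(t_2))$; by Hypothesis~\ref{hyp_A1_ho}(3), both $t_1, t_2$ lie in $T_n$, so Definition~\ref{hyp_A2_ho}(g) gives $x \in \mrm{dom}(f_{t_1 \wedge t_2})$. Since $f_{t_1 \wedge t_2} \subseteq f_{t_1}$ and $f_{t_1 \wedge t_2} \subseteq f_{t_2}$ by Definition~\ref{hyp_A2_ho}(d), the values $f_{t_1}(x)$, $f_{t_1 \wedge t_2}(x)$, $f_{t_2}(x)$ coincide.

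For (3), uniqueness of $y$ is immediate from (4): if $x \in \mrm{suc}(y_1) \cap \mrm{suc}(y_2)$ then $y_i = f_{t_i}(x)$ and these agree. For the level bound, set $\ell(x) = \min\{m : x \in X_m\}$ and check that $y \in \mrm{suc}(w)$ forces $\ell(w) < \ell(y)$: take $t$ with $f_t(y) = w$ and a $<_T$-minimal $t' \leq_T t$ with $y \in \mrm{dom}(f_{t'})$; then item (f) forces $y \in \mrm{dom}(f_{t'}) \setminus X_{\mathbf{n}(t')-1}$, so $\ell(y) = \mathbf{n}(t')$ while $\ell(w) \leq \mathbf{n}(t')-1$. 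It follows that every chain into $x \in X_{n+1} \setminus X_n$ has length at most $n+1$. Item (2) is a downward induction on $\mathbf{n}(t)$: the base case $f_{t_0} = \emptyset$ is trivial, and item (f) propagates $\mrm{dom}(f_t) \cap X_0 = \emptyset$ to every $t$, so no $z \in X_0$ appears in any successor set.

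For (1), linear ordering of $\{z : z <_\mathfrak{m} x\}$ comes from a backward induction: two chains ending at $x$ share their penultimate element by the uniqueness half of (3), and iterating strips off one element per step until one chain is nested inside the other; finiteness follows from strict monotonicity of $\ell$. Item (5) goes through by the same argument applied coordinatewise, since (4) forces tuple predecessors to match component by component. The $\omega$-many levels claim is handled by choosing $x_0 \in X_0$ (nonempty by (a)) and iterating the surjectivity in Definition~\ref{hyp_A2_ho}(\ref{def_of_f_t_ho}) to produce $x_{n+1} \in \mrm{dom}(f_t) \setminus X_n$ with $f_t(x_{n+1}) = x_n$, giving an element of level exactly $n+1$.

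For (6), the chain $\mathbf{x}$ descending from $\bar{y}$ to $\bar{x}$ is uniquely determined by the coordinatewise uniqueness of predecessors established in the paragraph above. At each step, the set $\bigcap_i \{t \in T : f_t(x_{\ell+1, i}) = x_{\ell, i}\}$ is a finite intersection of cones in $T$ by Definition~\ref{hyp_A2_ho}(i); in a rooted tree, a nonempty intersection of cones is itself a cone (since common upper bounds force their roots to be $<_T$-comparable), and hence admits a unique $<_T$-minimum which we take as $t_\ell$, automatically satisfying the minimality condition in (d). Item (7) is a direct use of (h): pick $x \in X_{n+1} \setminus (X_n \cup \bigcup\{\mrm{dom}(f_t) : t \in T_{n+1} \setminus T_n\})$. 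Combining $\mrm{dom}(f_t) \subseteq X_{\mathbf{n}(t)}$ for $\mathbf{n}(t) \leq n$ with item (e) (applied to the unique $s \in T_{n+1}$ with $s <_T t$) for $\mathbf{n}(t) \geq n+2$ shows $x \notin \mrm{dom}(f_t)$ for every $t \in T$, making $x$ a $<_\mathfrak{m}$-minimal element of $X_{n+1} \setminus X_n$. The main delicate point throughout is keeping meets and cone intersections inside the correct finite level $T_n$, which is controlled by Hypothesis~\ref{hyp_A1_ho}(3),(6); the rest is direct definition chasing.
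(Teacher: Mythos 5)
The paper's own proof establishes only item (3): it takes $<_T$-minimal $t_1, t_2$ with $f_{t_\ell}(x) = y_\ell$, uses Definition~\ref{hyp_A2_ho}(f) to place both in $T_{n+1} \setminus T_n$, and invokes Definition~\ref{hyp_A2_ho}(g) to force $t_1 = t_2$; the remaining items are left unproved. Your argument for (3)/(4) is the same idea slightly reorganized: you prove (4) directly via the meet $t_1 \wedge t_2$ together with items (d) and (g), and then (3) follows, which avoids the minimality step and is a touch cleaner. Your level-bound via item (f), and your proofs of (1), (2), (5), (6), are sound and go beyond what the paper supplies.

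There is, however, a genuine gap in your item (7). You write that for $\mathbf{n}(t) \geq n+2$ one can apply item (e) to ``the unique $s \in T_{n+1}$ with $s <_T t$,'' but item (e) requires $s \in T_{n+1} \setminus T_n$, and the chain $\{ s : s <_T t\}$ need not contain such an element: the $\mathbf{n}$-values along a $<_T$-chain are strictly increasing by Hypothesis~\ref{hyp_A1_ho}(6) but can skip $n+1$ entirely. The correct argument uses item (f): if $x \in \mrm{dom}(f_t)$, take the $<_T$-minimal $s_0 \leq_T t$ with $x \in \mrm{dom}(f_{s_0})$; then item (f) applied to $s_0$ gives $x \notin X_{\mathbf{n}(s_0)-1}$, and since $x \in X_{n+1} \setminus X_n$ this forces $\mathbf{n}(s_0) = n+1$, i.e.\ $s_0 \in T_{n+1} \setminus T_n$, contradicting the choice of $x$ outside $\bigcup\{\mrm{dom}(f_s) : s \in T_{n+1} \setminus T_n\}$. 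A secondary point: your argument for (2) (and the termination of the descent just described) tacitly uses $f_{t_0} = \emptyset$; this is stated explicitly in the rigid section (Definition~\ref{hyp_A2}, clause $(\alpha)$) but only implicit in Definition~\ref{hyp_A2_ho}, so it is worth flagging that you are relying on it.
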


	\begin{proof} We only prove (3). Assume that $x \in \mrm{suc}(y_1) \cap \mrm{suc}(y_2)$. So for $\ell \in \{1, 2\}$, there is $t_\ell \in T$ such that $f_{t_\ell}(x) = y_\ell$, and w.l.o.g. $s <_T t_\ell$ implies that $x \not \in \mrm{dom}(f_s)$.  Now, by assumption $x \in X_{n+1} \setminus X_n$, and so $t_1, t_2 \in T_n \setminus T_{< n}$ (by Definition~\ref{hyp_A2_ho}(\ref{item_g_hopfian_tree})). Now, by Definition~\ref{hyp_A2_ho}(\ref{item_h_hopfian_tree}), we have:
	$$t_1 \neq t_2 \Rightarrow \mrm{dom}(f_{t_1}) \cap \mrm{dom}(f_{t_2}) \subseteq X_{<n},$$
hence $t_1 = t_2$, which implies that $y_1 = f_{t_1}(x) = f_{t_2}(x) = y_2$, as promised.
\end{proof}


	\begin{definition}\label{def_G02_hopfian} Let $\mathfrak{m} \in \mathrm{K}^{\mrm{ho}}(T)$ (i.e. as in Convention~\ref{the_m_convention}).
	\begin{enumerate}[(1)]
	\item Let $G_2 = G_2[\mathfrak{m}]$ be $\bigoplus \{ \mathbb{Q}x : x \in X\}$.
	\item Let $G_0 = G_0[\mathfrak{m}]$ be the subgroup of $G_2$ generated by $X$, i.e. $\bigoplus \{ \mathbb{Z}x : x \in X\}$.
	\item For $t \in T_n \setminus T_{< n}$, let:
	\begin{enumerate}[(a)]
	\item $H_{(2, t)} = \bigoplus \{ \mathbb{Q}x : x \in X_n\}$;
	\item $I_{(2, t)} = \bigoplus \{ \mathbb{Q}x : x \in X_{n-1} \}$;
	\item\label{hatf_on_G2_ho} $\hat{f}_t$ is the (unique) homomorphism from $H_{(2, t)}$ onto $I_{(2, t)}$ such that $x \in X_n \setminus \mrm{dom}(f_t) \Rightarrow \hat{f}_t(x) = 0$ and $x \in \mrm{dom}(f_t) \Rightarrow \hat{f}_t(x) = f_t(x)$ (cf. \ref{hyp_A2_ho}(\ref{def_of_f_t_ho})).
	\end{enumerate}
	\item For $t \in T$, we define $H_{(0, t)} := H_{(0, t)} \cap G_0$ and $I_{(0, t)} := I_{(2, t)} \cap G_0$.
	\item For $\hat{f}_t$ as above, we have that $\hat{f}_t [H_{(0, t)}] = I_{(0, t)}$, consequently when we talk about $\hat{f}_t$ as a map on $G_0$ we actually mean $\hat{f}_t \restriction H_{(0, t)}$.
	\item\label{def_<*_ho} We define the partial order $<_*$ on $G^+_0$ by letting $a <_* b$ if and only if $a \neq b \in G^+_0$ and, for some $n < \omega$,  $a_0, ..., a_n \in G_0, a_0 = a, a_n = b$ and:
	$$\ell < n \Rightarrow \exists t \in T(\hat{f}_t(a_{\ell + 1}) = a_{\ell}).$$
\end{enumerate}
\end{definition}

	\begin{cclaim}\label{G0_is_tree_ho} \begin{enumerate}[(1)]
	\item $(G_0, <_*)$ is a tree with $\omega$ levels (recall Hypothesis~\ref{hyp_A1_ho}(\ref{omega_levels})).
	\item If $s <_T t \in T$, then $\hat{f}_s \subseteq \hat{f}_t$, moreover if $s \in X_m \setminus X_{< m}$, then $f_s = f_t \restriction X_m$. 
	\item If $a \in G^+_0$, then:
	\begin{enumerate}[(a)]
	\item $a$ has a unique representation $\sum_{\ell < n} q_\ell x_\ell$, for some $n \geq 1$, $q_\ell \in \mathbb{Q}^+$, $(x_\ell : \ell < n) \in \mrm{seq}_n(X)$;
	\item the sequence $((q_\ell, x_\ell) : \ell < n)$ is unique up to permutation;
	\item $a \in \mrm{dom}(\hat{f}_t)$ iff $\ell < n \Rightarrow x_\ell \in X_{\mathbf{n}(t)}$;
	\item if $b = \hat{f}_t(a)$, then for some $u \subseteq n$ we have $b = \sum \{q_\ell f_t(x_\ell) : \ell \in u\}$, $u = \{ \ell < n : x_\ell \in \mrm{dom}(f_t)\}$.
	\end{enumerate}
	\item If $a, b \in G^+_0$ and $a \leq_* b$, then there is a unique pair $(\bar{a}, \bar{t})$ such that for some unique $n < \omega$ we have:
	\begin{enumerate}
	\item $\bar{a} = (a_\ell : \ell \leq n)$;
	\item $\bar{t} = (t_\ell : \ell < n)$, $t_\ell \in T$;
	\item $\hat{f}_{t_\ell}(a_{\ell+1}) = a_\ell$;
	\item if $t < t_\ell$, then $a_{\ell+1} \notin \mrm{dom}(\hat{f}_t)$.
	\item $a_n = b$ and $a_0 = a$.
	\end{enumerate}
	\end{enumerate}
\end{cclaim}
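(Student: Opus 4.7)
Parts (2) and (3) unfold directly from the definitions. For (2), on the common domain $H_{(2,s)} \subseteq H_{(2,t)}$, both $\hat{f}_s$ and $\hat{f}_t$ send $x \in \mrm{dom}(f_s)$ to $f_s(x) = f_t(x)$ by clause (d) of Definition~\ref{hyp_A2_ho}, while for $x \in X_{\mathbf{n}(s)} \setminus \mrm{dom}(f_s)$ clause (e) of the same definition gives $x \notin \mrm{dom}(f_t)$, so both maps send $x$ to $0$; the ``moreover'' assertion is then the same fact restated. Parts (3)(a)--(b) are the standard uniqueness of representation in the free abelian group $G_0 = \bigoplus_{x \in X} \mathbb{Z}x$ (with signs absorbed appropriately into the ordering of the $x_\ell$'s). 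Parts (3)(c)--(d) follow directly from the construction of $\hat{f}_t$ in Definition~\ref{def_G02_hopfian}: its domain $H_{(2,t)}$ depends only on $\mathbf{n}(t)$, and $\hat{f}_t$ is the linear extension of the partial assignment $x \mapsto f_t(x)$ (zero outside $\mrm{dom}(f_t)$ on $X_{\mathbf{n}(t)}$).

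The real work is in (4), which I would prove by induction on the length $n$. The base case $n = 1$ amounts to: given $a, b \in G^+_0$ with $\hat{f}_t(b) = a$ for some $t$, the $<_T$-minimal such $t$ is unique (after which $a$ is forced). Since $\mrm{dom}(\hat{f}_t)$ depends only on $\mathbf{n}(t)$, the minimality condition forces $\mathbf{n}(t) = m := \max\{\mathbf{n}(x) : x \in \mrm{supp}(b)\}$ and hence $t \in T_m \setminus T_{<m}$. For the crucial uniqueness step, suppose $t_1 \neq t_2 \in T_m \setminus T_{<m}$ both satisfy $\hat{f}_{t_i}(b) = a$; writing $b = \sum q_j x_j$ with distinct $x_j$'s, one splits $\mrm{supp}(b)$ according to membership in $\mrm{dom}(f_{t_1})$ and $\mrm{dom}(f_{t_2})$ and invokes: (i) Claim~\ref{observation_Xtree_ho}(3), that each $x$ has a unique predecessor in $(X, <_{\mathfrak{m}})$; (ii) clause (h) of Definition~\ref{hyp_A2_ho}, identifying $\mrm{dom}(f_{t_1}) \cap \mrm{dom}(f_{t_2}) = \mrm{dom}(f_{t_1 \wedge t_2})$; and (iii) clause (k) of the same definition, the pairwise disjointness of the preimage tuples $\bar{z}_{(n,t,\bar{x})}$ across different $t \in T_n \setminus T_{n-1}$. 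Together these rule out any ``accidental'' cancellation between the ``$t_1$-only'' and ``$t_2$-only'' contributions to $a$, forcing $t_1 \wedge t_2 \in T_m$ and hence $t_1 = t_2$. The inductive step then determines $a_{n-1} := \hat{f}_{t_{n-1}}(b)$ uniquely, and one recurses on $a \leq_* a_{n-1}$.

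Part (1) is then a consequence of (4). Given $b \in G^+_0$ and $a_1, a_2 <_* b$, by (4) each admits a unique minimal chain from $b$; the canonical immediate predecessor of $b$ coincides in both chains (by the base-case uniqueness above), and a short induction on $\mathbf{n}(b)$ shows the two chains agree on an initial segment until one terminates, so $a_1$ and $a_2$ are $<_*$-comparable. Well-foundedness of $\{c : c <_* b\}$ follows from the strict decrease $\mathbf{n}(a_\ell) < \mathbf{n}(a_{\ell+1})$ along the minimal chain (since $a_\ell \in I_{(2,t_\ell)}$ has its support in $X_{\mathbf{n}(t_\ell)-1}$), and the $\omega$ levels arise from Hypothesis~\ref{hyp_A1_ho} via the strict growth $X_n \subsetneq X_{n+1}$, together with the existence of $<_{\mathfrak{m}}$-minimal new elements at each level (Claim~\ref{observation_Xtree_ho}(\ref{observation_Xtree_ho7})). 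The main obstacle is the uniqueness step inside (4): eliminating multiple $<_T$-minimal candidates $t$ at level $m$ is exactly where the $\bar{z}$-disjointness axiom (clause (k) of Definition~\ref{hyp_A2_ho}) must be used decisively, since a less careful axiomatization would allow two incomparable $t$'s at the same level to produce the same image under $\hat{f}_{(\cdot)}$ via offsetting contributions.
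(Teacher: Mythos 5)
Parts (2) and (3) of your argument are fine and match the expected unraveling of the definitions. The trouble is concentrated in (4) and in the passage from (4) to (1).

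First, your deduction of (1) from (4) does not go through. What (4) asserts is that for each \emph{fixed} pair $(a,b)$ with $a \leq_* b$ the canonical chain is unique; it does \emph{not} say that all chains descending from a fixed $b$ share the same first step. Your key sentence, ``the canonical immediate predecessor of $b$ coincides in both chains,'' is precisely what needs proof, and it is not a formal consequence of (4). Concretely, take $T$-incomparable $t_1, t_2 \in T_m \setminus T_{<m}$; by the wedge-domain clause their domains meet only inside $\mrm{dom}(f_{t_1 \wedge t_2}) \subseteq X_{<m}$, so a $b$ whose support meets both $\mrm{dom}(f_{t_1}) \setminus X_{m-1}$ and $\mrm{dom}(f_{t_2}) \setminus X_{m-1}$ generally has $\hat{f}_{t_1}(b) \neq \hat{f}_{t_2}(b)$, with both images nonzero and both $<_*$-below $b$. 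If both images lie in $\langle X_0 \rangle$ they are $<_*$-minimal, hence $<_*$-incomparable. You must rule this branching out directly; the per-pair uniqueness in (4) is silent about it.

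Second, two steps inside your proof of (4) are not justified. You assert that the minimality condition (d) forces $\mathbf{n}(t_\ell) = m = \max\{\mathbf{n}(x): x \in \mrm{supp}(b)\}$, but (d) only says every $t <_T t_\ell$ has $\mathbf{n}(t) < m$; since an immediate $T$-predecessor of $t_\ell$ can have $\mathbf{n}$-value much less than $\mathbf{n}(t_\ell) - 1$, the conclusion $\mathbf{n}(t_\ell) = m$ does not follow. More seriously, the pairwise-disjointness requirement on the $\bar{z}_{(n,t,\bar{x})}$ tuples is a purely positive richness axiom (it guarantees the existence of disjoint designated preimages of short sequences over $X_{<n}$); it constrains nothing about the values of $f_{t_1}, f_{t_2}$ on elements outside those designated tuples, so it cannot, by itself, ``rule out accidental cancellation'' of the form
$\sum_{x_j \in A_1} q_j f_{t_1}(x_j) = \sum_{x_j \in A_2} q_j f_{t_2}(x_j)$
with $A_1 \subseteq \mrm{dom}(f_{t_1}) \setminus \mrm{dom}(f_{t_2})$ and $A_2 \subseteq \mrm{dom}(f_{t_2}) \setminus \mrm{dom}(f_{t_1})$. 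In $\mathrm{K}^{\mrm{ho}}(T)$ the maps $f_t$ are very far from injective (each maps $\mrm{dom}(f_t)\setminus X_{n-1}$ \emph{onto} $X_{n-1}$), so such cancellations are genuinely possible and need a dedicated argument. Finally, a small bookkeeping point: the clause identifying $\mrm{dom}(f_s) \cap \mrm{dom}(f_t)$ with $\mrm{dom}(f_{s \wedge t})$ is (g), and the clause saying $\{x : f_s(x) = f_t(x)\} = \mrm{dom}(f_{s \wedge t})$ is (j); you cite (h), which is the $X_{n+1} \supsetneq X_n \cup \cdots$ condition.
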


	\begin{proof} Straightforward. 
\end{proof}

	\begin{cclaim}\label{technical_claim_on_order}
	\begin{enumerate}[(1)]
	\item Let $Y = \bigcup_{t \in T} \{ \mrm{dom}(f_t) \}$. There is $\mathbf{h}: Y \rightarrow T$ s.t. for $x \in Y$:
	\begin{enumerate}[(a)]
	\item there is $\mathbf{n}(x) > 0$ such that $x \in X_{\mathbf{n}(x)} \setminus X_{< \mathbf{n}(x)}$ and $\mathbf{n}(\mathbf{h}(x)) = \mathbf{n}(x)$;
	\item $x \in \mrm{dom}(f_{\mathbf{h}(x)})$;
	\item for $t \in T$, $x \in \mrm{dom}(f_t) \Leftrightarrow \mathbf{h}(x) \leq_{T} t$;
	\item if $f_t(x)$ is well-defined, then $f_t(x) = f_{\mathbf{h}(x)}(x) \in X_{<\mathbf{n}(x)}$.
	\end{enumerate}
	\item $(X, <_\mathfrak{m})$ is not well-founded.
	\end{enumerate}
\end{cclaim}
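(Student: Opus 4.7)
For part (1), my plan is to define $\mathbf{h}(x)$ as the unique $t \in T$ satisfying $\mathbf{n}(t) = \mathbf{n}(x)$ and $x \in \mrm{dom}(f_t)$, and then read off the four listed properties. Note first that $Y \cap X_0 = \emptyset$: by Definition~\ref{hyp_A2_ho}(\ref{def_of_f_t_ho}) only nodes $t$ with $\mathbf{n}(t) \geq 1$ carry a non-empty $f_t$, and iteratively Definition~\ref{hyp_A2_ho}(\ref{item_g_hopfian_tree}) would force any hypothetical $x \in X_0 \cap \mrm{dom}(f_t)$ to descend into $\mrm{dom}(f_{t_0}) = \emptyset$. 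So $x \in Y$ gives $\mathbf{n}(x) \geq 1$, establishing the $\mathbf{n}(x) > 0$ part of (a). For existence of $\mathbf{h}(x)$: start from any $t_0$ with $x \in \mrm{dom}(f_{t_0})$; if $\mathbf{n}(t_0) > \mathbf{n}(x)$, then $x \in X_{\mathbf{n}(x)} \subseteq X_{\mathbf{n}(t_0)-1}$, so Definition~\ref{hyp_A2_ho}(\ref{item_g_hopfian_tree}) yields some $s <_T t_0$ with $x \in \mrm{dom}(f_s)$. Iterate: by Hypothesis~\ref{hyp_A1_ho}(6) levels strictly decrease along the produced $<_T$-chain, yet they are bounded below by $\mathbf{n}(x)$ (since $\mrm{dom}(f_s) \subseteq X_{\mathbf{n}(s)}$), so after finitely many steps we reach $t^*$ with $\mathbf{n}(t^*) = \mathbf{n}(x)$ and $x \in \mrm{dom}(f_{t^*})$.

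For uniqueness, setting $n = \mathbf{n}(x)$, if distinct $t_1, t_2$ both meet the defining condition then $r = t_1 \wedge t_2 <_T t_1, t_2$ lies in $T_{<n}$ by Hypothesis~\ref{hyp_A1_ho}(6), so $\mathbf{n}(r) < n$; but Definition~\ref{hyp_A2_ho}(\ref{item_h_hopfian_tree}) forces $x \in \mrm{dom}(f_r) \subseteq X_{\mathbf{n}(r)} \subseteq X_{<n}$, contradicting $x \in X_n \setminus X_{<n}$. Setting $\mathbf{h}(x) := t^*$, properties (a) and (b) are built in. For (c), the direction $\mathbf{h}(x) \leq_T t \Rightarrow x \in \mrm{dom}(f_t)$ is immediate from the clause $s <_T t \Rightarrow f_s \subseteq f_t$ in Definition~\ref{hyp_A2_ho}; conversely, running the descent starting at $t$ produces some $t^\star \leq_T t$ satisfying the defining condition for $\mathbf{h}(x)$, and uniqueness forces $\mathbf{h}(x) = t^\star \leq_T t$. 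Property (d) then follows from $f_{\mathbf{h}(x)} \subseteq f_t$ together with the fact that the target of $f_{\mathbf{h}(x)}$ is $X_{\mathbf{n}(x)-1} = X_{<\mathbf{n}(x)}$ (Definition~\ref{hyp_A2_ho}(\ref{def_of_f_t_ho})).

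For part (2), the plan is to exhibit an explicit infinite $<_\mathfrak{m}$-chain. Since $T$ is rooted with $\omega$ levels and each $T_n$ is finite, K\"onig's Lemma yields an infinite $<_T$-chain $(r_k : k < \omega)$ with $r_0 = t_0$ and $r_k <_T r_{k+1}$; setting $n_k := \mathbf{n}(r_k)$, Hypothesis~\ref{hyp_A1_ho}(6) makes $(n_k)$ strictly increasing, with $n_0 = 0$. I build $x_k \in \mrm{dom}(f_{r_k}) \setminus X_{n_k - 1}$ by recursion on $k \geq 1$. For the base, $f_{r_1}$ maps $\mrm{dom}(f_{r_1}) \setminus X_{n_1-1}$ onto $X_{n_1-1} \supseteq X_0 \neq \emptyset$ (Definition~\ref{hyp_A2_ho}(\ref{def_of_f_t_ho}), (\ref{X0_not_empty_ho})), so the preimage set is non-empty; pick any such $x_1$. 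For the step, $f_{r_k} \subseteq f_{r_{k+1}}$ gives $x_k \in \mrm{dom}(f_{r_{k+1}})$, while $x_k \in X_{n_k} \subseteq X_{n_{k+1}-1}$ allows us to invoke surjectivity of $f_{r_{k+1}} \restriction (\mrm{dom}(f_{r_{k+1}}) \setminus X_{n_{k+1}-1})$ onto $X_{n_{k+1}-1}$ to obtain $x_{k+1}$ in that set with $f_{r_{k+1}}(x_{k+1}) = x_k$. Then $x_{k+1} \in \mrm{suc}(x_k)$ via $t = r_{k+1}$, so $(x_k)_{k \geq 1}$ is strictly $<_\mathfrak{m}$-increasing and extends to an infinite branch of $(X, <_\mathfrak{m})$.

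The only delicate point is calibrating the descent in part (1) so it stops exactly at level $\mathbf{n}(x)$; the two-sided bound $\mathbf{n}(x) \leq \mathbf{n}(s) < \mathbf{n}(t)$ at each step does this, after which the meet axiom Definition~\ref{hyp_A2_ho}(\ref{item_h_hopfian_tree}) cleanly enforces uniqueness. Part (2) is then routine bookkeeping against the chain property $f_{r_k} \subseteq f_{r_{k+1}}$ and the onto clause of Definition~\ref{hyp_A2_ho}(\ref{def_of_f_t_ho}).
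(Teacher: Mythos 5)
Part~(1) of your argument is correct: the descent along \ref{hyp_A2_ho}(\ref{item_g_hopfian_tree}) bounded below by $\mathbf{n}(x)$, together with the meet axiom \ref{hyp_A2_ho}(\ref{item_h_hopfian_tree}) for uniqueness, is exactly the right construction of $\mathbf{h}$, and the verification of (a)--(d) from there is clean. (The paper gives no explicit proof of this claim, but your argument is the natural one.)

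Part~(2), however, has a real gap. You invoke K\"onig's Lemma to get an infinite $<_T$-branch, but nothing in Hypothesis~\ref{hyp_A1_ho} forces $T$ to be finitely branching: $T_n$ finite and increasing to $T$ is just a statement about an exhaustion of a countable set, and places no bound on the number of immediate $<_T$-successors of any node. In fact, if $T$ is well-founded and has $\omega$ levels (which is precisely the case driving Theorem~\ref{main_th_hopfian}(2)), then $T$ is \emph{necessarily} infinitely branching (otherwise K\"onig would give an infinite branch), so your chain $(r_k)$ need not exist. Since the whole point of the construction is to have Claim~\ref{technical_claim_on_order}(2) available regardless of whether $T$ is well-founded, the K\"onig step is exactly the thing one cannot use. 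The intended argument is local rather than global: for any $x \in X$, since $T_{\mathbf{n}(x)+1} \setminus T_{\mathbf{n}(x)} \neq \emptyset$ (Hypothesis~\ref{hyp_A1_ho}(3)), there is $t$ with $\mathbf{n}(t) = \mathbf{n}(x)+1$, and by Definition~\ref{hyp_A2_ho}(\ref{def_of_f_t_ho}) $f_t$ maps $\mrm{dom}(f_t)\setminus X_{\mathbf{n}(x)}$ \emph{onto} $X_{\mathbf{n}(x)} \ni x$, so some $y$ with $\mathbf{n}(y) > \mathbf{n}(x)$ lies in $\mrm{suc}(x)$; thus no element of $X$ is $<_\mathfrak{m}$-maximal and one can recursively build an infinite $<_\mathfrak{m}$-chain without ever leaving a single level of $T$ at a time.

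One caveat you should be aware of: taken literally, the clause ``$\bar{x} \subseteq \mrm{dom}(f_t)$'' in Definition~\ref{def_<X_ho}(1) would make the local argument above fail (the $t$ chosen need not satisfy $x \in \mrm{dom}(f_t)$), and in fact under that literal reading part~(2) of the claim would be \emph{false} whenever $T$ is well-founded: by your own part~(1)(c), a witnessing $t$ for $y \in \mrm{suc}(x)$ must satisfy $\mathbf{h}(x) \leq_T t$ and $\mathbf{h}(y) \leq_T t$, so an infinite $<_\mathfrak{m}$-chain would push down to an infinite $<_T$-chain through the $\mathbf{h}$-images. This strongly suggests the intended definition is the automatic condition ``$\bar{y} \subseteq \mrm{dom}(f_t)$'', under which the elementary argument above goes through. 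Your construction happens to verify the stronger literal condition along the chain, but only because it piggybacks on an infinite $<_T$-branch — which, as noted, need not exist.
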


	\begin{definition}\label{def_G1_hopfian} Let $(p_a : a \in G^+_0)$ be a sequence of pairwise distinct primes s.t.:
$$a = \sum_{\ell < k} q_\ell x_\ell, \, q_\ell \in \mathbb{Z}^+, \, (x_\ell : \ell < k) \in \mrm{seq}_k(X) \Rightarrow p_a \not \vert \; q_\ell.$$ 
	\begin{enumerate}[(1)] 
	\item\label{P_a} For $a \in G^+_0$, let $\mathbb{P}_a = \bigcup_{n < \omega} \mathbb{P}_{(a, n)}$ and $G_1 = G_1[\mathfrak{m}] = G_1[\mathfrak{m}(T)] = G_1[T] = \bigcup_{n < \omega} G_{(1, n)}$, where, by induction on $n < \omega$, we define $G_{(1, n)}$, $\mathbb{P}_{(a, n)}$ as follows:
	\begin{enumerate}[(a)]
	\item $G_{(1, n)}$ is a subgroup of $G_2$ extending $G_0$;
	\item $G_{(1, n)}$ is increasing with $n < \omega$;
	\item 
	\begin{enumerate}[(i)]
	\item for $a \in G^+_0$ and $n < \omega$, we shall define a set of powers of primes $\mathbb{P}_{(a, n)}$ increasing with $n$;
	\item $G_{(1, n)}$ is the subgroup of $G_2$ generated by the following set:
	$$G_0 \cup \{p^{-m}a : a \in G^+_0 \text{ and } p^m \in \mathbb{P}_{(a, n)}\};$$
	\item for $n = 0$, $\mathbb{P}_{(a, n)}$ is the set of $p^m$ such that there is an element $b \in G^+_0$ such that $p = p_b$, $a \leq_* b$ and $m < \omega$;
	\end{enumerate}
	\item\label{itemd} if $n = j+1$, then $\mathbb{P}_{(a, n)}$ is the union of $\mathbb{P}_{(a, j)}$ together with the $p^m$ such that for some $t, k, \bar{x}, \bar{q}, \bar{y}$ we have:
	\begin{enumerate}[(i)]
	\item $t \in T_n \setminus T_{< n}$;
	\item $0 < k \leq |X_j| = |X_{<n}|$;
	\item $\bar{q} = (q_\ell : \ell < k)$, $q_\ell \in \mathbb{Q}^+$;
	\item $\bar{x} = (x_\ell : \ell < k) \in \mrm{seq}_{k}(X_j)$;
	\item 
	\begin{enumerate}[$(\cdot_1)$]
	\item $\bar{y} = (y_\ell : \ell < k) \in \mrm{seq}_{k}(X)$
	\item $\bar{y} \subseteq X_{n} \cap \mrm{dom}(f_t) \setminus X_{< n}$;
	\item $\bar{y} = \bar{z}_{(n, t, \bar{x})}$ (recall \ref{hyp_A2_ho}(\ref{thezs}));
	\item $a = \sum \{q_\ell y_\ell : \ell < k\}$;
	\end{enumerate}
	\item\label{db} $q_\ell \in \{\frac{m}{n!}, -\frac{m}{n!} : m \in \{1, ..., (n!)^2\} \}$;
	\item $p^m \vert n!$;
	\item in $G_{(1, j)}$, $p^m$ divides $\sum \{q_\ell f_t(y_\ell) : \ell < k\}$, i.e., we have that:
	$$\frac{1}{p^m} \sum \{q_\ell f_t(y_\ell) : \ell < k\} \in G_{(1, j)};$$
	\item $p \leq n$.
	\end{enumerate}
	\end{enumerate}	
	\end{enumerate}
\end{definition}


	\begin{lemma}\label{lemma_pre_main_th_ho}\begin{enumerate}[(1)]
	\item\label{G1p_pure_co-hop} If $p = p_a$, $a \in G^+_0$, then:
	$$G_{(1, p)} = \langle b \in G^+_0 : b \leq_* a \rangle^*_{G_1},$$
where we recall that $\{ b \in G^+_0 : b \leq_* a \}$ is a finite set (by Claim~\ref{observation_Xtree_ho}(1)).
	\item For $t \in T$, $H_{(1, t)} := H_{(2, t)} \cap G_1$ and $I_{(1, t)} := I_{(2, t)} \cap G_1$ are pure in $G_1$.
	\item\label{convention_hatf_G1_ho} For $\hat{f}_t$ as in Definition~\ref{def_G02_hopfian}(\ref{hatf_on_G2_ho}), we have that $\hat{f}_t [H_{(1, t)}] \subseteq I_{(1, t)}$, consequently when we talk about $\hat{f}_t$ as a map on $G_1$ we actually mean $\hat{f}_t \restriction H_{(1, t)}$.
	\item\label{item4} $\hat{f}_t [H_{(1, t)}] \neq I_{(1, t)}$.
	\item If $a \in G^+_0$ and $p$ is a prime, then:
	$$G_1 \models p^\infty \vert \, a \; \Leftrightarrow \; G_{(1, 0)} \models p^\infty \vert \, a.$$
\end{enumerate}
\end{lemma}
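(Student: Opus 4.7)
The plan is to dispatch items (2) and (3) by unravelling the definitions, prove (5) by a bookkeeping argument on the inductive construction of $\mathbb{P}_{(a,n)}$, derive (1) from (5), and handle (4) last, which I expect to be the main obstacle.

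For (2), the subspaces $H_{(2,t)}$ and $I_{(2,t)}$ are $\mathbb{Q}$-direct summands of $G_2$ (they are spanned by subsets of the $X$-basis), hence pure in any intermediate subgroup, so intersecting with $G_1$ preserves purity. For (3), $\hat{f}_t$ is $\mathbb{Q}$-linear on $H_{(2,t)}$, so it suffices to check it sends each generator $p^{-m}a \in G_1 \cap H_{(2,t)}$ into $G_1$. Generators coming from $\mathbb{P}_{(a,0)}$ are handled by noting that if $a \leq_* b$ with $p = p_b$, then one chains up with $\hat{f}_t$ through Claim~\ref{G0_is_tree_ho}(2) to witness $p \in \mathbb{P}_{(\hat{f}_t(a),0)}$, and this witness lives in $G_1$. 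Generators arising from $\mathbb{P}_{(a,n)} \setminus \mathbb{P}_{(a,n-1)}$ are even more direct: clause (viii) of Definition~\ref{def_G1_hopfian}(\ref{itemd}) asserts that $p^m$ divides $\sum q_\ell f_t(y_\ell) = \hat{f}_t(a)$ already in $G_{(1,j)}$, which is exactly what is needed.

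For (5), I would induct on $n$ and show that no element of $\mathbb{P}_{(a,n)}\setminus \mathbb{P}_{(a,n-1)}$ can produce new $p$-divisibility of any element already in $G^+_0$. By \ref{def_G1_hopfian}(\ref{itemd}), such a witness $p^m$ arises from a generator whose support $\bar{y} = \bar{z}_{(n,t,\bar{x})}$ lies in $X_n \setminus X_{<n}$, and by the pairwise disjointness in \ref{hyp_A2_ho}(\ref{thezs})$(\cdot_2)$ these supports are distinct from the supports of any element of $G^+_0$ that lies already in $G_{(1,n-1)}$. Combined with the uniqueness of the representation $a = \sum q_\ell x_\ell$ from Claim~\ref{G0_is_tree_ho}(3), this forces any new $p$-divisibility to come from multiples of these specific generators, and hence cannot lower the denominator of a fixed $a \in G^+_0$. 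For (1), the inclusion $\supseteq$ is immediate from the definition of $\mathbb{P}_{(b,0)}$ for $b \leq_* a$; for $\subseteq$, given $c \in G_{(1,p)}$ write $c$ as a $\mathbb{Q}$-combination of $G_0$-elements in reduced form, and use (5) to see that the only generators contributing to unbounded $p$-divisibility are those from $\mathbb{P}_{(d,0)}$ with $d \leq_* a$, placing $c$ in the claimed pure closure.

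The substantive item is (4), and this is where I expect the real work. The plan is to produce some $c \in I_{(1,t)}$ which has no preimage under $\hat{f}_t$ in $H_{(1,t)}$. A natural candidate is an element $c = p^{-m}b$ with $b \in I_{(0,t)}$ and $p^m \in \mathbb{P}_{(b,j)}$, chosen so that the unique preimage of $b$ in the free $\mathbb{Q}$-extension $H_{(2,t)}$ misses the generators of $G_1$ at the relevant denominator. Concretely, I would pick $b \in X_{<\mathbf{n}(t)}$ witnessing $p = p_b$ and exploit Definition~\ref{hyp_A2_ho}(\ref{hyp_A2_ho_mapped_to_0}): there is an element $y \in X_{\mathbf{n}(t)} \setminus X_{<\mathbf{n}(t)}$ not in $\mrm{dom}(f_t)$, so any $\mathbb{Q}$-lift of $b$ to $H_{(2,t)}$ can pick up arbitrary multiples of such $y$'s in the kernel. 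The hard part is certifying that no lift into $H_{(1,t)}$ exists: this requires analyzing all the ways divisibility by $p^m$ can be witnessed in $G_1$ at elements of $H_{(2,t)}$, and arguing using (5), the uniqueness in Claim~\ref{G0_is_tree_ho}(3)(b), and the disjointness in \ref{hyp_A2_ho}(\ref{thezs})$(\cdot_2)$ that none of the corresponding generators sit in $H_{(1,t)}$. I anticipate the technical core being a careful case analysis on whether the relevant $\bar{z}_{(n,t',\bar{x})}$ supports are contained in $X_{\mathbf{n}(t)}$, analogous to the proof of \cite[5.17]{1205}.
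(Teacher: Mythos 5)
Your overall approach (unravel definitions, induct on the construction of $\mathbb{P}_{(a,n)}$) matches the paper's intent, which states all of (2)--(5) are routine and derives (1) by analogy with \cite[5.17(1)]{1205}. However there are concrete gaps.

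For item (3), two issues. First, you reduce to ``each generator $p^{-m}a \in G_1 \cap H_{(2,t)}$'': but $G_1 \cap H_{(2,t)}$ is not obviously generated by those generators of $G_1$ that happen to lie in $H_{(2,t)}$ (an element of $G_1 \cap H_{(2,t)}$ could be a sum of generators whose supports individually escape $X_{\mathbf{n}(t)}$ but cancel). The cleaner route is to extend $\hat{f}_t$ by zero to a homomorphism on all of $G_2$ and verify it sends \emph{every} generator $p^{-m}a$ of $G_1$ into $G_1$. Second, and more seriously, in your handling of the stage-$n$ generators you conflate two different elements of $T$: the $t$ in Definition~\ref{def_G1_hopfian}(1)(\ref{itemd}) is existentially quantified and tied to the specific $\bar{z}_{(n,t,\bar{x})}$ whose image is $\bar{x}$, while the $t$ for which we apply $\hat{f}_t$ is arbitrary. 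When they differ, the identity $\sum q_\ell f_t(y_\ell) = \hat{f}_t(a)$ you invoke from clause (vii) (not (viii), which is $p \leq n$) simply does not hold; one must argue instead that $\hat{f}_t$ kills those $\bar{z}$'s when $t$ is not above the relevant node, using \ref{hyp_A2_ho}(\ref{item_h_hopfian_tree}) and the disjointness in \ref{hyp_A2_ho}(\ref{thezs})$(\cdot_2)$.

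For item (1), the paper does not derive it from (5); it imports the argument of \cite[5.17(1)]{1205}. Your plan ``use (5) to see that the only generators contributing to unbounded $p$-divisibility are those from $\mathbb{P}_{(d,0)}$'' only controls divisibility of elements of $G_0^+$, but $G_{(1,p)}$ contains elements far outside $G_0$ and identifying it with a specific pure closure needs the full analysis of \cite{1205}. Finally, for (4) you explicitly leave the key certification step (``no lift into $H_{(1,t)}$ exists'') open, calling it the technical core; since you have not carried out that case analysis, (4) is not proved. A workable route for (4) is to take $y_* \in X_{\mathbf{n}(t)-1}$ with $p = p_{y_*}$ and show $p^{-m}y_*$ has no preimage in $H_{(1,t)}$ for $m$ large, using (5) together with the fact that no basis element in $\hat{f}_t^{-1}(y_*)$ lies $\leq_*$-above $y_*$ (so the denominators available in $H_{(1,t)}$ at the fibre over $y_*$ are bounded); but this still requires checking preimages that spread over several basis elements, so the work is real.
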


	\begin{proof} (1) is proved similarly to \cite[5.17(1)]{1205}. The rest is easy, unraveling \ref{def_G1_hopfian}.
\end{proof}

	\begin{theorem}\label{main_th_hopfian} Let $\mathfrak{m}(T) \in \mathrm{K}^{\mrm{ho}}_1(T)$.
	\begin{enumerate}[(1)]
	\item We can modify the construction so that $G_1[\mathfrak{m}(T)] = G_1[T]$ has domain $\omega$ and the function $T \mapsto G_1[T]$ is Borel (for $T$ a tree with domain $\omega$).
	\item $T$ has an infinite branch iff $G_1[T]$ is not Hopfian.
\end{enumerate}
\end{theorem}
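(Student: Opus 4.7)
The plan for item (1) is routine Borel coding: $G_1[T]$ is assembled in countably many stages, each depending on finite initial data of $T$ (the finite sets $T_n$, $X_n$, $\mrm{dom}(f_t)$ for $t \in T_n$, and the auxiliary choices of $\bar{z}_{(n,t,\bar{x})}$), so after fixing a canonical Borel enumeration of the generators and of the divisibility witnesses from Definition~\ref{def_G1_hopfian}, the map $T \mapsto G_1[T] \subseteq \omega$ becomes Borel. So I concentrate on item (2), which has two directions.

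For the ``$T$ not well-founded implies $G_1[T]$ not Hopfian'' direction, fix an infinite branch $(t_n : n < \omega)$ with $t_n <_T t_{n+1}$ and set $\hat{f} := \bigcup_{n < \omega} \hat{f}_{t_n}$. By Claim~\ref{G0_is_tree_ho}(2) the family $(\hat{f}_{t_n})_{n<\omega}$ is an increasing chain of homomorphisms; by Definition~\ref{def_G02_hopfian}(\ref{hatf_on_G2_ho}) each $\hat{f}_{t_n}$ maps $H_{(2,t_n)} = \bigoplus_{x \in X_n}\mathbb{Q}x$ onto $I_{(2,t_n)} = \bigoplus_{x \in X_{n-1}}\mathbb{Q}x$, and both chains exhaust $G_2$. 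Hence $\hat{f} \in \mrm{End}(G_2)$ is onto. By Lemma~\ref{lemma_pre_main_th_ho}(\ref{convention_hatf_G1_ho}) we have $\hat{f}(G_1) \subseteq G_1$; onto-ness of the restriction $\hat{f}\restriction G_1 : G_1 \twoheadrightarrow G_1$ is precisely what clause (d) of Definition~\ref{def_G1_hopfian} was engineered to deliver --- namely, whenever a witness $\sum q_\ell f_t(y_\ell)$ is divisible by $p^m$ in $G_{(1,j)}$, the same divisibility is declared at the preimage $\sum q_\ell y_\ell$ in $G_{(1,n)}$ (via the canonical choice $\bar{y} = \bar{z}_{(n,t,\bar{x})}$), and every element of $G_1$ admits such a pre-image along the branch. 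Non-injectivity is immediate: by Definition~\ref{hyp_A2_ho}(\ref{hyp_A2_ho_mapped_to_0}), for each $n$ some $x \in X_{n+1} \setminus X_n$ lies outside $\mrm{dom}(f_{t_{n+1}})$, so $\hat{f}_{t_{n+1}}(x) = 0$ and $x \in \ker \hat{f} \setminus \{0\}$.

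For the ``$T$ well-founded implies $G_1[T]$ Hopfian'' direction, assume $\pi \in \mrm{End}(G_1)$ is onto and aim for injectivity by replicating the architecture of the proof of Theorem~\ref{main_th_Sec1}(3), adapted to the Hopfian setting. Using the primes $p_a$ and Lemma~\ref{lemma_pre_main_th_ho}(\ref{G1p_pure_co-hop}), together with an analogue of Claim~\ref{claim9A} tailored to the tree $(X, <_{\mathfrak{m}})$ of Claim~\ref{observation_Xtree_ho}, first show that $\pi(a) \in \mathbb{Q}^+ x$ forces $\mrm{supp}(a)$ to be a singleton; this produces an injection $h : X \to X$ with $\pi(h(x)) \in \mathbb{Q}^+ x$. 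Surjectivity of $\pi$ (and the divisibility structure of $G_1$, via Lemma~\ref{lemma_pre_main_th_ho}(5)) promotes $h$ to a permutation $g = h^{-1}$ of $X$, and a uniform coefficient $q_* \in \mathbb{Z}\setminus\{0\}$ emerges: $\pi(x) = q_* g(x)$ for every $x \in X$. If $g = \mrm{id}_X$ then $\pi = q_* \cdot \mrm{id}_{G_1}$ is clearly injective on the torsion-free $G_1$. If instead $g$ moves some $x$, then iterating the canonical witness $\mathbf{h}$ from Claim~\ref{technical_claim_on_order} (which assigns to each displaced pair $(x, g(x))$ a minimal $t \in T$ above all previous witnesses) produces, as in Case 1 of the rigidity proof, a strictly $<_T$-increasing sequence $(t_n : n < \omega)$ in $T$, contradicting well-foundedness.

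The main obstacle is that, in contrast with the rigidity setting where the $\hat{f}_t$ are isomorphisms between pure subgroups, here each $\hat{f}_t$ has a nontrivial kernel sending $X_n \setminus \mrm{dom}(f_t)$ to $0$; consequently the supports-analysis underlying $(*_2)$--$(*_8)$ of the rigidity proof cannot be transcribed verbatim. I expect to need a replacement for Claim~\ref{claim9A} that controls the $<^k_\mathfrak{m}$-minimal representatives of an $E_k$-class in the Hopfian tree geometry while accounting for coordinates that can be collapsed, and a refined ``strong reduction'' device tracking which partial maps $f_t$ keep a given tuple non-zero --- this is precisely where the uniqueness clause of Claim~\ref{observation_Xtree_ho}(6) and the cone property Definition~\ref{hyp_A2_ho}(i) should do the heavy lifting, letting us read off the infinite $<_T$-chain from any non-identity component of $g$.
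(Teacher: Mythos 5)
Your plan for the forward direction (non-well-founded $T$ yields onto, non-injective $\hat f$) is essentially the paper's argument; in particular, the onto-ness via Definition~\ref{def_G1_hopfian}(d) and the kernel via \ref{hyp_A2_ho}(\ref{hyp_A2_ho_mapped_to_0}) are both the right ingredients, and item (1) is treated as routine in the paper as well. The problem is the well-founded direction, where your strategy is not only different from the paper's but appears to break at its very first step.

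Your plan is to replicate the rigidity proof of Theorem~\ref{main_th_Sec1}(3): start from ``$\pi(a)\in\mathbb Q^+x$ forces $\mrm{supp}(a)$ to be a singleton,'' build an injection $h:X\to X$, upgrade to a permutation $g$ with a uniform scalar $q_*$, and then extract an infinite $<_T$-chain from a non-trivially moved $x$. But that singleton-support step is \emph{false} in the Hopfian setting. In Section~\ref{sec_cohop} the divisibility structure on $G_1$ is governed by the $\mathcal E_{\mathfrak m}$-class of $a$, and the $\hat f^{(i,\iota)}_t$ are isomorphisms, so every member of $G_{(1,p_a)}$ has support of the same cardinality as $\mrm{supp}(a)$; this is what drives Claim~\ref{claim9A} and $(*_2)$ of the rigidity proof. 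Here the relevant structure is Lemma~\ref{lemma_pre_main_th_ho}(\ref{G1p_pure_co-hop}): $G_{(1,p_a)}$ is the pure closure of the $\leq_*$-\emph{down-set} of $a$, and the $\hat f_t$ from Definition~\ref{def_G02_hopfian}(\ref{hatf_on_G2_ho}) have kernels $X_n\setminus\mrm{dom}(f_t)$. So if $a=q_1x_1+q_2x_2$ with $x_1\in\mrm{dom}(f_t)$ and $x_2\notin\mrm{dom}(f_t)$, then $\hat f_t(a)=q_1f_t(x_1)\leq_*a$ already lies in the down-set with a singleton support. Consequently $\pi(a)\in G_{(1,p_a)}\cap\mathbb Q^+x$ carries no information about $|\mrm{supp}(a)|$, and there is no way to produce your injection $h$ from onto-ness of $\pi$.

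This is not just a technical annoyance: the very object you are trying to build cannot exist. If $\pi$ is onto but \emph{not} one-to-one, then $\pi$ cannot act as a permutation of $X$ times a non-zero integer (that would make it injective on the torsion-free $G_1$, and you have not yet derived a contradiction). The paper's proof takes the opposite route, and crucially it never produces an infinite $<_T$-branch in this direction: it assumes $\pi$ is onto but not one-to-one and shows in steps $(*_3)$--$(*_7)$ that for every $x\in X$, $\mrm{supp}(\pi(x))\subseteq\{y:y<_*x\}$ (the strict down-set), using the incomparability argument $(*_4)$ rather than any Claim~\ref{claim9A}-style combinatorics, and then in $(*_8)$--$(*_{14})$ it shows the resulting coefficient data $q_{(n,i)}$ is uniform over $Z_n$ (via $(*_{10})$, $(*_{11})$) and vanishes (via $(*_{12})$ and the existence of $<_*$-minimal elements guaranteed by Claim~\ref{observation_Xtree_ho}(\ref{observation_Xtree_ho7})), concluding $\pi\equiv 0$, which contradicts onto-ness. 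So the right contradiction is $\mrm{ran}(\pi)=\{0\}$, not an infinite $<_T$-chain. Your ``strong reduction'' device and the appeal to Claim~\ref{technical_claim_on_order} to read off an infinite branch therefore aim at a goal the paper deliberately avoids; to repair the argument you should abandon the permutation/coefficient machinery altogether and instead show directly that onto plus non-injective forces every $\pi(x)$ into the strict down-set of $x$, from which $\pi\equiv 0$ follows by the uniformity and minimality arguments of $(*_8)$--$(*_{14})$.
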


	\begin{proof} Items (1) is easy. Concerning the ``left-to-right" direction of items (2), let $\bar{t} = (t_n : n < \omega)$ be an infinite branch of $T$. Then $\hat{f} = \bigcup_{n < \omega} \hat{f}_{t_n}$ is an endomorphism of $G_2$ into $G_2$, since $G_2 = \bigcup_{n < \omega} H_{(2, t_n)}$, where $(H_{(2, t_n)} : n < \omega)$ is chain of pure subgroups of $G_2$ with limit $G_2$ (cf. Definition~\ref{def_G02_hopfian}(\ref{hatf_on_G2_ho})). Moreover, $\hat{f}$ is onto $G_2$, by Definition~\ref{def_G02_hopfian}(\ref{hatf_on_G2_ho}). Now we claim:
	\begin{enumerate}[$(*_{1})$]
	\item
	\begin{enumerate}[(a)]
	\item $\hat{f} \restriction G_1 = \bigcup_{n < \omega} \hat{f}_{t_n} \restriction H_{(1, t_n)}$ is an endomorphism of $G_1$;
	\item $\hat{f} \restriction G_1$ is onto;
	\item $\hat{f} \restriction G_1$ is not one-to-one.
	\end{enumerate}
	\end{enumerate}
We prove item $(*_{1})(a)$. We need to show that $\hat{f}$ maps $G_1$ into $G_1$. 
Clearly $\hat{f}$ maps $X$ into $X \cup \{0\}$, hence it maps $G_0$ into $G_0$. Thus, we have to prove:
	\begin{enumerate}[$(*_{1.1})$]
	\item If $a \in G^+_0$, $n < \omega$ and $p^m \in \mathbb{P}_{(a, n)}$, then $\frac{\hat{f}(a)}{p^m} \in G_1$.
	\end{enumerate}
We prove this on induction on $n < \omega$. It suffices to consider the following two cases.
\newline \underline{Case 1}. $n = 0$ and $p^m \in \mathbb{P}_{(a, 0)}$.
\newline By \ref{def_G1_hopfian}(1)(c)(ii), $p = p_b$ for some $b$ such that $a \leq_* b$ and so, for some $k < \omega$ large enough we have that $\hat{f}(a) = \hat{f}_{t_k}(a) \leq_* a \leq_* b$ and $\leq_*$ is a partial order, again by \ref{def_G1_hopfian}(1)(c)(ii), we are done.
\newline \underline{Case 2}. $n = j+1$ and $\frac{a}{p^m} \in \mathbb{P}_{(a, n)} \setminus \mathbb{P}_{(a, j)}$.
\newline Let then $(n, t, (q_\ell, x_\ell, y_\ell : \ell < k))$ be as in \ref{def_G1_hopfian}(1)(d) and recall that $\bar{t} = (t_n : n < \omega)$ was fixed at the beginning of the proof. Also, necessarily $t_{n+1} \notin T_n$.
\newline \underline{Case 2A}. $t \leq_T t_{n}$. By \ref{def_G1_hopfian}(1)(d), recalling $a = \sum\{q_\ell y_\ell : \ell < k\}$, we have that:
$$\hat{f}_{t_{n}} (\frac{1}{p^m}a) = p^{-m} \sum q_\ell f_{t_{n}}(y_\ell) = p^{-m} \sum q_\ell f_{t}(y_\ell) \in G_{(1, j)}.$$
\underline{Case 2B}. $t \not\leq_T t_{n}$. As $\bar{y} \subseteq \mrm{dom}(f_t) \cap X_{n} \setminus  X_{< n}$, by clause (v)$(\cdot_3)$ of \ref{def_G1_hopfian}(1)(d) we have $(y_\ell : \ell < k) = \bar{z}_{(n, t, \bar{x})}$, where $\bar{x} = (x_\ell : \ell < k)$. Hence, by \ref{hyp_A2_ho}(\ref{thezs}), we have $\bar{z}_{(n, t, \bar{x})} \in \mrm{seq}_{\mrm{lg}(\bar{x})}(\mrm{dom}(f_t) \setminus X_{< n})$. Thus, $\bar{z}_{(n, t, \bar{x})} \in \mrm{dom}(f_t) \setminus X_{< n}$, but we have:
	$$\bigcup_{i < \omega} f_{t_i} \restriction X_n = f_{t_n} \restriction X_n.$$
	Let $r = t \wedge t_n$, then $\mrm{dom}(f_t) \cap \mrm{dom}(f_{t_n}) = \mrm{dom}(f_r)$. As $t \not\leq_T t_n$, necessarily $r \neq t$ and so $\mathbf{n}(r) < n$, so $\bar{z}_{(n, t, \bar{x})} \subseteq \mrm{dom}(f_t) \setminus \mrm{dom}(f_r)$, hence $\bar{z}_{(n, t, \bar{x})}$ is disjoint from $\mrm{dom}(f_{t_n})$, but $X_n \subseteq \mrm{dom}(f_{t_n})$, hence $\hat{f}_{t_n}(\bar{z}_{(n, t, \bar{x})}) = (0, ..., 0)$, and so $\hat{f}(\bar{z}_{(n, t, \bar{x})}) = (0, ..., 0)$. So we have the following:
$$\hat{f}(a) = \hat{f}(\sum_{\ell < k} q_\ell y_\ell) = \sum_{\ell < k} q_\ell \hat{f}_{t_{n+1}}(\bar{z}_{(n, t, \bar{x})}) = 0.$$

\smallskip 
\noindent This proves $(*_{1})$(a), we thus move to $(*_{1})$(b). To this extent, by algebra it suffices to prove that if $b \in G^+_1$, $p^{-m}b \in G_1$, then for some $c \in G_1$ we have that for infinitely many $k < \omega$, $\hat{f}_{t_k}(c) = b$. To this extent, let $b = \sum_{\ell < k} q_\ell x_\ell$ with $(x_\ell : \ell < k) \in \mrm{seq}_k(X)$ and $q_\ell \in \mathbb{Q}^+$, for $\ell < k$. Let then $n_0$ be large enough so that we have: 
\begin{enumerate}[$(*_{1.2})$]
	\item 
	\begin{enumerate}[$(\cdot_1)$]
	\item if $\ell < k$, then $q_\ell \in \{\frac{i}{n_0!}, -\frac{i}{n_0!} : i \in \{1, ..., (n_0!)^2\}\}$ and $p^{m} \vert \; n_0$;
	\item $\{x_\ell : \ell < k\} \subseteq X_{n_0}$.
	\end{enumerate}
	\end{enumerate}
Let now $n_1 > n_0!$ be such that $\mathbf{n}(t_{n_1}) > n_0$. As $\mathbf{n}(t_{n_1+1}) > \mathbf{n}(t_{n_0})$, for some $n < \omega$ we have $t_{n_1+1} \in T_{n+1} \setminus T_n$. Now using \ref{def_G1_hopfian}(1)(d) for $t = t_{n_1+1}$ we are done.

\smallskip 
\noindent Finally, concerning clause $(*_1)$(c) it suffices to observe that for every $n < \omega$ such that $t_n \in T_{n+1} \setminus T_n$ we have that $X_{n+1} \supsetneq X_n \cup \bigcup \{\mrm{dom}(f_t) : t \in T_{n+1} \setminus T_n\}$ by condition \ref{hyp_A2_ho}(\ref{hyp_A2_ho_mapped_to_0}) and $\hat{f}_{t_n}$ maps any member in this set to $0$ by \ref{def_G02_hopfian}(\ref{hatf_on_G2_ho}).

\medskip 
\noindent We now prove the ``right-to-left" direction of item (2). To this extent, suppose:
	\begin{enumerate}[$(*_2)$]
	\item $T$ is well-founded and let $G_1[T] = G_1$ and $\pi \in \mrm{End}(G_1)$ be onto.
\end{enumerate}	 
We shall prove that $\pi$ is $1$-to-$1$. Toward contradiction, suppose not. Now:
\begin{enumerate}[$(*_3)$]
	\item If $x \in X$, then $\mrm{supp}(\pi(x)) \subseteq X_{\leq_* x} = \{y \in X : y \leq_* x\}$.
\end{enumerate}	
[Why? Let $p = p_x$, so $x \in G_{(1, p)}$, thus $\pi(x) \in G_{(1, p)}$, then apply \ref{lemma_pre_main_th_ho}(\ref{G1p_pure_co-hop}).]
	\begin{enumerate}[$(*_4)$]
	\item If $x \neq y \in X$ are $\leq_*$-incomparable and $x \in \mrm{supp}(\pi(x))$, then $y \in \mrm{supp}(\pi(y))$. 
\end{enumerate}
We prove $(*_4)$. Let $p = p_{x+y}$, so $x+y \in G^+_0$, and even $x+y \in G_{(1, p)}$, hence by \ref{lemma_pre_main_th_ho}(\ref{G1p_pure_co-hop}), $\pi(x+y)$ has the form $\sum \{r_i f_{\bar{t}_i}(x+y) : i < i(*) \}$, where $r_i \in \mathbb{Q}^+$ and $\bar{t}_i \in T^{n(i)}$, for some $n(i) \geq 0$ such that $(\bar{t}_i : i < i(*))$ is with no repetitions. So:
	\begin{enumerate}[$(*_{4.1})$]
	\item $\pi(x+y) = \pi(x) + \pi(y) = \sum_{i < i(*)} r_i f_{\bar{t}_i}(x) + \sum_{i < i(*)} r_i f_{\bar{t}_i}(y)$.
	\end{enumerate}
Now, by assumption, $x \in \mrm{supp}(\pi(x))$, whereas $x \notin \mrm{supp}(\pi(y))$, because by $(*_3)$, $\mrm{supp}(\pi(y)) \subseteq X_{\leq_* y}$ and $x \not\leq_* y$. Together $x$ belongs to the support of $\sum_{i < i(*)} r_i f_{\bar{t}_i}(x)$. Thus,  for some $j < i(*)$, $f_{\bar{t}_j}(x) = x$, and so $\bar{t}_j = ()$. As $(\bar{t}_i : i < i(*))$ is with no repetitions, we have that $i < i(*)$ and $j \neq i$ implies $\mrm{lg}(\bar{t}_j) > 0$, so $f_{\bar{t}_i}(y) = y$, but $f_{\bar{t}_j}(y) \neq y$, hence $y$ appears exactly once in the second term of the RHS of $(*_{4.1})$, but as before it does not appear in the first term of the LHS, hence $y$ appears in the support of the LHS, i.e., $\pi(x) + \pi(y)$ but $y \notin \mrm{supp}(\pi(x))$. So $y \in \mrm{supp}(\pi(y))$.
	\begin{enumerate}[$(*_5)$]
	\item For some $x \in X$, $x \notin \mrm{supp}(\pi(x))$.
	\end{enumerate}
Why $(*_5)$? Otherwise for every $a \in G^+_1$, let $a = \sum\{q_\ell x_\ell : \ell < k\}$, $k > 0$, $q_\ell \in \mathbb{Q}^+$ and $(x_\ell : \ell < k)$ with no repetitions. \begin{enumerate}[$(*_{5.1})$]
	\item W.l.o.g., $\ell < k-1$ implies $x_{k-1} \not\leq_* x_\ell$.
	\end{enumerate} 
	\begin{enumerate}[$(*_{5.2})$]
	\item $x_{k-1} \in \mrm{supp}(\pi(x_{k-1}))$.
	\end{enumerate} 
	[Why? By our assumption toward contradiction concerning $(*_5)$.]
	\begin{enumerate}[$(*_{5.3})$]
	\item If $\ell < k-1$, then $\mrm{supp}(\pi(x_{k-1})) \subseteq \{y \in X : y \leq_* x_\ell \}$.
	\end{enumerate} 
	[Why? By $(*_3)$.]
	\begin{enumerate}[$(*_{5.4})$]
	\item If $\ell < k-1$, then $x_{k-1} \notin \mrm{supp}(\pi(x_{\ell}))$.
	\end{enumerate} 
	[Why? By $(*_{5.1})$ and $(*_{5.3})$.]
	\begin{enumerate}[$(*_{5.5})$]
	\item $x_{k-1} \in \mrm{supp}(\pi(x_{k-1})) \setminus \bigcup_{\ell < k-1} \mrm{supp}(\pi(x_\ell))$.
	\end{enumerate} 
	[Why? By $(*_{5.1})$+$(*_{5.4})$.]
	\begin{enumerate}[$(*_{5.6})$]
	\item $x_{k-1} \in \mrm{supp}(\pi(a))$.
	\end{enumerate} 
	[Why? By $(*_{5.5})$ as $a = \sum_{\ell < k} q_\ell x_\ell$, $q_\ell \in \mathbb{Q}^+$.]
	\newline By $(*_{5.6})$, $\mrm{supp}(a) \neq \emptyset$, and so $\pi(a) \neq 0$. As $a$ was an arbitrary member of $G^+_1$, this implies that $\pi$ is $1$-to-$1$, contrary to our assumption.
\begin{enumerate}[$(*_6)$]
	\item For every $x \in X$, $x \notin \mrm{supp}(\pi(x))$.
	\end{enumerate}
Why $(*_6)$? Suppose not. Fixing $x \in X$, we can find $y \in X$ such that $y \notin \mrm{supp}(f(y))$ (otherwise $(*_6)$ holds). By \ref{observation_Xtree_ho}(\ref{observation_Xtree_ho7}), let $z \in X$ be such that $z$ is $\leq_*$-incomparable with both $x$ and $y$. By $(*_4)$ applied to the pair $(y, z)$, we deduce that $z \notin \mrm{supp}(\pi(z))$ and by $(*_4)$ applied to the pair $(x, z)$, we deduce that $x \notin \mrm{supp}(f(x))$, as promised. 
\newline Notice now that by $(*_3)$+$(*_6)$ we have that:
\begin{enumerate}[$(*_7)$]
	\item For every $x \in X$, $\mrm{supp}(\pi(x)) \subseteq X_{<_* x} = \{y \in X : y <_* x\}$.
	\end{enumerate}
\begin{enumerate}[$(*_8)$]
	\item Let $Z_n = \{x \in X : n = |\{y \in X : y <_* x \}|$. For $x \in Z_n \neq \emptyset$, let $n_x = n$ and $y_{(x, n-1)} <_* \cdots <_* y_{(x, 0)} < x$ list the set $\{y \in X : y < x \}$.
\end{enumerate}
\begin{enumerate}[$(*_9)$]
	\item There are rationals $q_{(x, i)}$, for $(i < n_x)$, s.t. $\pi(x) = \sum \{q_{(x, i)} y_{(x, i)} : i < n_x\}$.
\end{enumerate}
Why $(*_9)$? For $x \in X_n$, by $(*_7)$, $(*_8)$, $\pi(x) \in \langle \{y \in X : y <_* x \}\rangle^*_{G_1}$, so $(*_9)$ holds, i.e., there are $(q_{(x, i)} : i < n) \in \mathbb{Q}^n$ such that:
$$\pi(x) = \sum \{q_{(x, i)}y_{(x, i)} : i < n_x\}.$$
\begin{enumerate}[$(*_{10})$]
	\item There are rationals $q_{(n, i)}$, $(i < n < \omega)$, such that $q_{(x, i)} = q_{(n, i)}$ when $n = n_x$.
\end{enumerate}
Why $(*_{10})$? Fix $n$, choose $x_1 \in Z_n$ and let $q_{(n, i)} = q_{(x_1, i)}$. Now, if $x_2 \in Z_n$ we can find $x_3 \in Z_n$ such that $\{y \in X : y \leq_* x_3\}$ is disjoint from $\{y \in X : y \leq x_1 \text{ or } y \leq_* x_2\}$. Now, because of the above, $(*_{10})$ follows from:
\begin{enumerate}[$(*_{11})$]
	\item If $x(1), x(2) \in Z_n$ and $\{y_{(x(1), i)} : i < n\} \cap \{y_{(x(2), i)} : i < n\} = \emptyset$, then for all $i < n$ we have that $q_{(x(1), i)} = q_{(x(2), i)}$.
\end{enumerate}
Why? By letting $a = x(1) + x(2)$ and $p = p_a$, hence we have:
$$\pi(a) = \sum \{q_{(x(1), i)} y_{(x(1), i)} + q_{(x(2), i)} y_{(x(1), i)}: i < n\},$$
 and $\pi(a) \in G_{(1, p)}$, and so we can continue as in earlier cases.
\begin{enumerate}[$(*_{12})$]
	\item $q_{(n, i)} = 0$, when $i < n < \omega$.
\end{enumerate}
[Why? We can find $x \in Z_n$ which is $<_*$-minimal, so $\pi(x) = 0$, by $(*_5)$.]
\begin{enumerate}[$(*_{13})$]
	\item For every $x \in X$, $\pi(x) = 0$.
\end{enumerate}
[Why? By $(*_{9})$, $(*_{10})$ and $(*_{11})$.]
\begin{enumerate}[$(*_{14})$]
	\item $\mrm{ran}(\pi) = \{0 \}$.
\end{enumerate}
So we get a contradiction, as $\pi$ was assumed to be onto.
\end{proof}

\begin{proof}[Proof of Theorem~\ref{main_th4_hopfian}] By \ref{main_th_hopfian}.
\end{proof}

\section{The co-Hopfian problem for mixed countable $2$-nilpotent groups}\label{sec_cohop}

	\begin{proviso}\label{the_proviso} A complete understanding of this section requires familiarity with the definitions of \cite[Section~5]{1205} and some of its proofs. We feel that this cannot be helped (without adding essentially the whole of \cite[Section~5]{1205} to the present paper).
\end{proviso}

	\begin{convention} In this section we (naturally) write $2$-nilpotent groups in multiplicative notations and abelian groups in additive notation. We invite the reader to adopt the appropriate point of view whenever relevant (e.g. in \ref{def_H2}(\ref{itemc})). 
\end{convention}

	\begin{convention}\label{the_mathfrakm_convention} \begin{enumerate}[(1)]
	\item $\mathfrak{m}$, $T$, $X^{\mathfrak{m}}$, $G_\ell[\mathfrak{m}]$ and $(p_a : a \in G^+_0)$ \mbox{are as in \cite[Sec.~5]{1205}.}
	\item When clear from the context, we may omit $\mathfrak{m}$ in $X^{\mathfrak{m}}$, $f^\mathfrak{m}_t$, $G_\ell[\mathfrak{m}]$.
	\item\label{def_k(n)} $\leq^2_X \subseteq X^2 \times X^2$ is the closure to a partial order on $X$ of the following set:
	$$\{((x_1, y_1), (x_2, y_2)) : x_1, y_1, x_2, y_2 \in X, \exists t \in T \text{ s.t. } (f_t(x_1), f_t(y_1)) = (x_2, y_2)\}.$$
	\item\label{def_E2)} $E_2 \subseteq X^2 \times X^2$ is the closure to an equivalence relation of the partial order $\leq^2_X$.
\item $\leq^1_X \subseteq X \times X$ and $E_1 \subseteq X \times X$ are defined similarly.	
\end{enumerate}
\end{convention}

	\begin{choice}\label{the_choice}
	\begin{enumerate}[(1)]
	\item $\mathbb{P}$ denotes the set of primes.
	\item\label{the_choice_primes} We choose $((x_p, y_p) : p \in \mathbb{P})$ such that:
	\begin{enumerate}[(a)]
	\item for every $p_1, p_2 \in \mathbb{P}$ we have $x_{p_1}, y_{p_2} \in X$;
	\item $(x_p : p \in \mathbb{P})^\frown (y_p : p \in \mathbb{P})$ are pairwise not $E_1$-equivalent;
	\item $x_p + y_p$ is not divisible by $p$ in $G_1 = G_1[\mathfrak{m}]$;
	\item if $p_1 \neq p_2 \in \mathbb{P}$, then $(x_{p_2}, y_{p_2}), (y_{p_2}, x_{p_2}) \notin (x_{p_1}, y_{p_1})/E_2 \cup (y_{p_1}, x_{p_1})/E_2$.
	\end{enumerate}
	\item For $x, y \in X$ and $n < \omega$, we define $K_{(x, y)} = K_{(y, x)}$  and $a_{(x, y, n)}$ such that:
	\begin{enumerate}[(a)]
	\item if $x = y$, then $K_{(x, y)}$ is the trivial group $\{0\}$ and and $a_{(x, y, n)} = 0$;
	\item if $x \neq y$, $(x, y)$ is not $E_2$-equivalent to any member of $\{(x_p, y_p), (y_p, x_p) : p \in \mathbb{P}\}$, then $K_{(x, y)}$ is the trivial group $\{0\}$ and $a_{(x, y, n)} = 0$;
	\item\label{Kxy} if $(x, y) \in (x_p, y_p)/E_2$ or $(x, y) \in (y_p, x_p)/E_2$, then $K_{(x, y)} = K_p$ is $\mathbb{Z}^\infty_p$, i.e., the divisible $p$-group of rank $1$, and $a_{(x, y, n)} \in K_p$, for $n \geq 1$, satisfies:
	\begin{enumerate}
	\item\label{pa=0} if $n = 1$, then $K_p \models pa_{(x, y, n)} = 0 \neq a_{(x, y, n)}$;
	\item if $n = m+1$, then $K_p \models p a_{(x, y, n)} = a_{(x, y, m)}$.
	\end{enumerate}
	\end{enumerate}
\end{enumerate}
\end{choice}

	\begin{remark} The fact that we can fulfill choice \ref{the_choice} is easy to verify reading \cite[Section~5]{1205}, on which we rely, as said in Proviso~\ref{the_proviso}.
\end{remark}

	\begin{definition}\label{def_H2} For $\mathfrak{m}$ as in Conv.~\ref{the_mathfrakm_convention} we define a group $H_2[\mathfrak{m}] = H_2$ as follows:
	\begin{enumerate}[(1)]
	\item $H_2$ is generated by:
	\begin{enumerate}[(a)]
	\item $(\frac{1}{n!}, x)$, for $x \in X$ and $0 < n< \omega$;
	\item $z_{(x, y, d)}$, for $x, y \in X$ and $d \in K_{(x, y)}$;
	\end{enumerate}
	\item\label{def_H2_1} freely except for the following equations:
	\begin{enumerate}
	\item $z_{(x, y, 0)} = e$ and $z_{(x, x, d)} = e$;
	\item\label{z's_commute} $[z_{(x_1, y_1, d_1)}, z_{(x_2, y_2, d_2)}] = e$;
	\item\label{itemc} $z^{-1}_{(x, y, d_1)} = z_{(y, x, d_2)}$, when $d_1 = - d_2 \in K_{(x, y)}$;
	\item\label{di's_item} $z_{(x, y, d_1)} z_{(x, y, d_2)} = z_{(x, y, d)}$ , when $d_1, d_2 \in K_{(x, y)}$ and $K_{(x, y)} \models d = d_1 + d_2$;
	\item\label{powers_item} $(\frac{1}{(n+1)!}, x)^{n+1} = (\frac{1}{n!}, x)$;
	\item\label{using_k(n)} $[(\frac{1}{n!}, x), (\frac{1}{n!}, y)] = z_{(x, y, a_{(x, y, n)})}$ (cf. Choice~\ref{the_choice}(\ref{def_k(n)}));
	\item\label{commuting_item} $z_{(x, y, d)}t = t z_{(x, y, d)}$, for $x, y \in X$, $d \in K_{(x, y)}$, and $t$ belonging to:
	$$\{(\frac{1}{n!}, x) : x \in X, 0 < n< \omega \}.$$
	\end{enumerate}
	\item\label{def_H2Y} For $Y \subseteq X$, we let $H_2[Y]$ be the subgroup of $H_2$ generated by the set:
	$$\{(\frac{1}{n!}, x) : n < \omega, \; x \in Y\}.$$
	\item\label{g2t} For $t \in T$, we let $g^2_t$ be the partial automorphism of $H_2$ satisfying:
	$$n < \omega \text{ and } x \in X \Rightarrow g^2_t((\frac{1}{n!}, x)) = (\frac{1}{n!}, f_t(x)),$$
where $f_t$ is the partial map from $X$ into $X$ from \cite[Section~5]{1205}.
\end{enumerate}	 
\end{definition}

	\begin{remark} Notice that by the choice we made in \ref{the_choice}(\ref{Kxy}) and the equations we imposed in the definition of $H_2$ from \ref{def_H2}(\ref{def_H2_1}) we have that $H_2$ is generated by $\{(\frac{1}{n!}, x) : n < \omega, \; x \in X \}$ (to see this notice that $\{ a_{(x, y, n)} : n < \omega \}$ generates $K_{(x, y)}$); this is also remarked in \ref{g17}(\ref{g17_1}). This makes the definition of $H_2[Y]$ from \ref{def_H2}(\ref{def_H2Y}) and the definition of $H_2$ consistent, i.e., we naturally have that $H_2 = H_2[X]$.
\end{remark}

\begin{cclaim}\label{4.3A}
	\begin{enumerate}[(1)]
	\item 
	\begin{enumerate}
	\item $H_2[\mathfrak{m}]$ is nilpotent of class $2$ and its center is generated by: $$Z = \{z_{(x, y, d)} : x, y \in X \text{ and } d \in K_{(x, y)} \};$$ 
	\item $K'_{(x, y)} = \{z_{(x, y, d)} : d \in K_{(x, y)}\}$ is a subgroup of $H_2$ canonically isomorphic to the $K_{(x, y)}$ from \ref{the_choice}, hence from now on we might identify them;
	\end{enumerate} 
	\item 
	\begin{enumerate}
	\item In $\langle Z \rangle_{H_2}$ we have $z_{(x_1, y_1, d_1)} = z_{(x_2, y_2, d_2)}$ iff $(x_1, y_1, d_1) = (x_2, y_2, d_2)$;
	\item $\langle Z \rangle_{H_2} = \bigoplus \{K_{(x, y)} : x, y \in X \} \in \mrm{AB}$ and it is divisible group.
	\end{enumerate}
	\item Fixing any linear order $<_1$ of $X$ and any linear order $<_2$ of $X \times X$, every member of $H_2$ can be written as a product of the following form (so $k, m < \omega$):
	$$(\frac{1}{n!}, x_0)^{q_0} \cdots (\frac{1}{n!}, x_{k-1})^{q_{k-1}} z_{(x_0, y_0, d_0)} \cdots z_{(x_{m-1}, y_{m-1}, d_{m-1})}$$
in such a way that:
\begin{enumerate}[(a)]
	\item $x_0 <_1 \cdots <_1 x_{k-1}$;
	\item $((x_i, y_i) : i < m)$ is $<_2$-increasing and, for every $i < m$, $K_{(x_i, y_i)} \models d_i \neq 0$;
	\item $q_\ell \in \mathbb{Z}^+$;
	\item if $n > 1$, then for some $\ell < k$, $\frac{q_\ell}{n} \notin \mathbb{Z}$;
	\item for every $i < m$, $x_i <_1 y_i$.
\end{enumerate}
	\item The representation of elements of $H_2$ from (3) is unique. Notice that this representation  depends on $<_1$ and $<_2$ and it is unique only after fixing them.
	\item $H_2[Y]$ is a group generated by $\{ (\frac{1}{n!}, x) : x \in Y, 0 < n< \omega\} \cup \{ z_{(x, y, d)} : x, y \in Y \text{ and } d \in K_{(x, y)} \}$ freely except for the equations in \ref{def_H2}(\ref{def_H2_1}) relativized to $Y$.
	\item For $t \in T$, $g^2_t$ (cf. \ref{def_H2}(\ref{g2t})) is a partial embedding of $H_2[\mrm{dom}(f_t)]$ into $H_2[\mrm{ran}(f_t)]$.
\end{enumerate}
\end{cclaim}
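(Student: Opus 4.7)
The whole claim will be read off from an explicit identification of $H_2[\mathfrak{m}]$ with a concrete central extension. Set $V = G_2[\mathfrak{m}] = \bigoplus_{x \in X}\mathbb{Q}\,x$ and $A = \bigoplus_{x <_1 y} K_{(x,y)}$. The idea is to construct a group $\tilde H$ on the set $V \times A$ whose class-$2$ commutator structure encodes relations (f) and (g) of Definition~\ref{def_H2}(\ref{def_H2_1}), display a surjection $\varphi : H_2 \to \tilde H$ via the presentation, and then use a normal-form argument to get injectivity of $\varphi$ and hence all six items at once.

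\textbf{Step 1 (easy direction).} Relations (b), (c), (d), (g) of \ref{def_H2}(\ref{def_H2_1}) directly yield that $\langle Z\rangle_{H_2}$ is abelian and central in $H_2$, while relation (f) forces every commutator of generators of the first type into $\langle Z\rangle_{H_2}$. Hence $[H_2,H_2]\le\langle Z\rangle_{H_2}\le Z(H_2)$, which proves nilpotency of class $\le 2$ and the inclusion $\langle Z\rangle_{H_2}\subseteq Z(H_2)$ needed for (1)(a).

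\textbf{Step 2 (concrete model).} Define a $\mathbb Z$-bilinear form $\omega:V\times V\to A$ by setting $\omega(\frac{x}{n!},\frac{y}{n!})=a_{(x,y,n)}$ for $x<_1 y$, $\omega(\frac{y}{n!},\frac{x}{n!})=0$, and extending bilinearly. Put
\[
(v_1,a_1)(v_2,a_2) \;=\; \bigl(v_1+v_2,\; a_1+a_2+\omega(v_1,v_2)\bigr)
\]
on $\tilde H := V\times A$. Bilinearity of $\omega$ gives associativity and makes $\{0\}\times A$ central, so $\tilde H$ is of class $\le 2$. By the universal property of the presentation, the obvious assignment $(\frac{1}{n!},x)\mapsto(\frac{x}{n!},0)$, $z_{(x,y,d)}\mapsto(0,d_{\mathrm{signed}})$ (sign chosen to respect (c)) defines a homomorphism $\varphi:H_2\to\tilde H$: relations (a)–(d), (g) are immediate from the central abelian structure of $\{0\}\times A$, (e) from $\mathbb Q$-linearity of $V$, and (f) from the definition of $\omega$. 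Since the generators of the two types in $\tilde H$ are clearly in the image, $\varphi$ is surjective.

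\textbf{Step 3 (normal form and conclusion).} Using (b), (c), (d), (g) one pushes every $z$-letter to the right of an arbitrary word of $H_2$, combines the resulting $z$'s via (d) by matching pairs $(x,y)$, and then uses (f) modulo $\langle Z\rangle$ together with (e) and Fact~\ref{2nilpotent_fact} to sort and reduce the $(\frac{1}{n!},x)$-letters, extracting the correction $z$-letters produced by swaps; the condition $\frac{q_\ell}{n}\notin\mathbb Z$ from (3)(d) is enforced by maximally applying (e) in reverse. This gives the representation in item (3). Distinct such normal forms produce distinct elements of $\tilde H$ under $\varphi$ (the $V$-part is determined by the $(q_\ell,x_\ell)$ data, and the $A$-part, once the $V$-part is known, uniquely determines the $d_i$'s), so $\varphi$ is injective, proving both (4) and $H_2\cong \tilde H$. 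From this identification: (1)(a) follows because $(v,a)$ with $v\ne 0$ is moved by some $(\frac{y}{m!},0)$, using \ref{the_choice}(\ref{the_choice_primes}) to guarantee that $\omega$ is non-degenerate on $V$; (1)(b) and (2) are immediate from $\langle Z\rangle_{H_2}=\{0\}\times A=\bigoplus K_{(x,y)}$; (5) follows because $H_2[Y]$ corresponds to $V[Y]\times A[Y]\subseteq \tilde H$ and the restricted relations are precisely (a)–(g) for generators in $Y$; (6) because $f_t$ extends linearly to $V[\mathrm{dom}(f_t)]\to V[\mathrm{ran}(f_t)]$ and, by Convention~\ref{the_mathfrakm_convention}(\ref{def_E2)}), $(x,y)\,E_2\,(f_t(x),f_t(y))$ forces $K_{(x,y)}=K_{(f_t(x),f_t(y))}$, so $\omega$ is transported and induces $g^2_t$ as a partial embedding.

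\textbf{Main obstacle.} The nontrivial technical point is the well-definedness of $\omega$ in Step 2: the relation $(\frac{1}{(n+1)!},x)^{n+1}=(\frac{1}{n!},x)$ from (e) combined with Fact~\ref{2nilpotent_fact} forces a consistency condition of the form $(n+1)^2\cdot a_{(x,y,n+1)}\equiv a_{(x,y,n)}$ in $K_{(x,y)}=K_p=\mathbb Z_p^\infty$. Verifying that the sequence $(a_{(x,y,n)})_{n\ge 1}$ picked out by \ref{the_choice}(3)(c) can indeed be arranged to satisfy this, using the divisible $p$-group structure of $K_p$, is the content of the setup in \cite[Section~5]{1205} invoked in Proviso~\ref{the_proviso}; it is exactly here that the finer choices behind \ref{the_choice} pay off.
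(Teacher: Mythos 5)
Your approach — building an explicit central extension $\tilde H = V \times A$ with a $\mathbb{Z}$-bilinear cocycle $\omega$, showing the presentation surjects onto it, and reading off normal forms, uniqueness, centre, and the subgroup/embedding statements from the model — is genuinely different from the paper's. The paper argues directly inside the presented group: given a word, it pushes the $z$-letters to the right using \ref{def_H2}(\ref{z's_commute})(\ref{commuting_item}), then reduces the number of $<_1$-inversions among the $(\frac{1}{n!},x)$-letters by repeated transpositions extracting $z$-correction terms via \ref{def_H2}(\ref{using_k(n)}); this gives existence of the normal form (item (3)) but leaves uniqueness (item (4)) and the structure of $\langle Z\rangle$ (items (1)(a), (2)) with only the remark ``easy''. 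Your model-theoretic route would, if correct, be the cleaner way to get uniqueness simultaneously with existence, since distinct normal forms visibly map to distinct elements of $\tilde H$.

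However, there is a genuine gap in Step~2, and you correctly sense where it is but dismiss it incorrectly. For $\omega$ to be well-defined by bilinearity you need, for every $x,y$ and $n$,
\[
\omega\Bigl(\tfrac{x}{n!},\tfrac{y}{n!}\Bigr) \;=\; \omega\Bigl((n+1)\tfrac{x}{(n+1)!},\,(n+1)\tfrac{y}{(n+1)!}\Bigr) \;=\; (n+1)^2\,\omega\Bigl(\tfrac{x}{(n+1)!},\tfrac{y}{(n+1)!}\Bigr),
\]
i.e.\ $a_{(x,y,n)} = (n+1)^2\,a_{(x,y,n+1)}$ in $K_p$. This is exactly the compatibility forced in the presented group by \ref{def_H2}(\ref{powers_item}) together with Fact~\ref{2nilpotent_fact}. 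But Choice~\ref{the_choice}(3)(c)(ii) as written declares $a_{(x,y,n)} = p\,a_{(x,y,n+1)}$, and since $a_{(x,y,n+1)}$ has order $p^{n+1}$ these two conditions are incompatible ($p \ne (n+1)^2$ modulo $p^{n+1}$ in general). You acknowledge this as ``the nontrivial technical point'' and then wave it away by saying it ``is the content of the setup in \cite[Section~5]{1205}''. That reference cannot fill the gap: the groups $K_{(x,y)}=K_p$ and the elements $a_{(x,y,n)}$ are new objects introduced in Choice~\ref{the_choice} of the present paper, not imported from \cite{1205}, so there is nothing in \cite[Section~5]{1205} that verifies this compatibility. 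To make your construction go through you would have to verify (or, as seems necessary, re-specify) the recursion on the $a_{(x,y,n)}$ so that the $(n+1)^2$-divisibility actually holds in $\mathbb{Z}_p^\infty$; as the proposal stands, $\omega$ is not a well-defined bilinear map and the whole Step~2 collapses.
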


\begin{proof} Easy, e.g. concerning (3), let $a \in H_1^+$ and let it be written as $w_0 \cdots w_{i(*)-1}$ with $i(*) \geq 1$ and, for $i < i(*)$, $w_i$ an integer power $\neq 0$ of some generator. By \ref{def_H2}(\ref{powers_item})(\ref{commuting_item}), w.l.o.g. we can assume that there is $j(*) < i(*)$ and  $0 < n < \omega$ s.t.:
\begin{equation}\tag{$\star$} a = (\frac{1}{n!}, x_1)^{q_1} \cdots (\frac{1}{n!}, x_{j(*)-1})^{q_{j(*)-1}} z_{(x_{j(*)}, y_{j(*)}, d_{j(*)})} \cdots z_{(x_{i(*)-1}, y_{i(*)-1}, d_{i(*)-1})}.
\end{equation}
Among the representations of $a$ as in $(\star)$ we choose one minimizing $k = |\{(i, j) : i < j < j_* \text{ but } x_j \leq_1 x_i\}|$. If $(x_i : i < j_*)$ is $<_1$-increasing, then we are essentially done, as the $z_{(x, y, d)}$'s are pairwise commuting, by \ref{def_H2}(\ref{z's_commute}), and $z_{(x, x, d)} = e$ and $z_{(x, y, d)} = z_{(y, x, -d)}$.
If $(x_i : i < j_*)$ is not $<_1$-increasing, then, as $\leq_1$ is a linear order of $X$, for some $i < j_*$ we have that $x_{i+1} \leq_1 x_i$, and necessarily $x_{i+1} <_1 x_i$, as if $x_{i+1} = x_i$ we can replace $(\frac{1}{n!}, x_i)^{q_i}(\frac{1}{n!}, x_{i+1})^{q_i+1}$ by $(\frac{1}{n!}, x_i)^{q_i+q_{i+1}}$ and $q_i + q_{i+1} \in \mathbb{Z}$ and so $k = |\{(i, j) : i < j < j_* \text{ but } x_j \leq_1 x_i\}|$ decreases, a contradiction. Now, making a case distinction on whether $q_i, q_{i+1}$ are positive or negative, we can use \ref{def_H2}(\ref{using_k(n)}) to prove by induction on $q_i + q_{i+1}$, that we can change the order of $(\frac{1}{n!}, x_i)^{q_i}$ and $(\frac{1}{n!}, x_{i+1})^{q_{i+1}}$ at the cost of an element of the form $z_{(x, y, d)}$, which goes to the right of the \mbox{expression $(\star)$ (recall that the $z$'s are in the center of $H_2$, see item~(1)).}

\smallskip
\noindent Concerning item (6), let $Y_1 = \mrm{dom}(f_t)$ and $Y_2 = \mrm{ran}(f_t)$. Then, firstly, $f_t$ maps $Y_1$ into $Y_2$ and it is $1$-to-$1$. To conclude it suffices to observe the following:
	$$(x, y) \in (x_p, y_p)/E_2 \; \Leftrightarrow \; (f_t(x), f_t(y)) \in (x_p, y_p)/E_2,$$
where we invite the reader to recall the definition of $E_2$ from  \ref{the_mathfrakm_convention}(\ref{def_E2)}).
\end{proof}


\begin{cclaim}\label{quotient_claim}  There is an onto homomorphism $h_2 : H_2 \rightarrow G_2$ such that:
	$$(\frac{1}{n!}, x) \mapsto \frac{1}{n!}x \in G_2 \text { and } z_{(x, y, d)} \mapsto e.$$
\end{cclaim}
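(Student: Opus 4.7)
The plan is a direct verification that the assignment on generators respects the defining relations of $H_2$ given in Definition~\ref{def_H2}(\ref{def_H2_1}), so that it extends uniquely to a group homomorphism; surjectivity will then be immediate.

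First I would define a set-map $h_2$ on the generating set $\{(\frac{1}{n!}, x) : x \in X,\, 0 < n < \omega\} \cup \{z_{(x,y,d)} : x,y \in X,\, d \in K_{(x,y)}\}$ of $H_2$ by the two prescribed rules, and appeal to the universal property of the group presentation of $H_2$ (recall that by \ref{4.3A}(5), $H_2 = H_2[X]$ is the group given by these generators subject exactly to the relations in \ref{def_H2}(\ref{def_H2_1})). It then suffices to check that each of the seven families of relations maps to an equality in $G_2$. Since $G_2 \in \mrm{TFAB}$ is abelian and since every $z_{(x,y,d)}$ is sent to $0 \in G_2$, the relations \ref{def_H2}(a)--(d) and (g) are all of the form ``some word in $z$'s (possibly commuted with other generators) equals another word in $z$'s''; under $h_2$ both sides become $0$, so these hold trivially. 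The commutator relation \ref{def_H2}(\ref{using_k(n)}) sends the left-hand side to the commutator of $\frac{1}{n!}x$ and $\frac{1}{n!}y$ in $G_2$, which is $0$ since $G_2$ is abelian, while the right-hand side $z_{(x,y,a_{(x,y,n)})}$ also maps to $0$; so this relation is preserved. The only non-trivial check is \ref{def_H2}(\ref{powers_item}): here the LHS $(\frac{1}{(n+1)!}, x)^{n+1}$ maps to $(n+1)\cdot \frac{1}{(n+1)!}x = \frac{1}{n!}x$ in the additive group $G_2$, which agrees with the image of the RHS.

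Having verified all relations, the map $h_2: H_2 \to G_2$ is a well-defined group homomorphism. For surjectivity, note that every element of $G_2 = \bigoplus \{\mathbb{Q}x : x \in X\}$ is a finite $\mathbb{Q}$-linear combination $\sum_{\ell<k} q_\ell x_\ell$ with $q_\ell \in \mathbb{Q}$; writing each $q_\ell$ as $\frac{m_\ell}{n!}$ for a common sufficiently large $n$ (and $m_\ell \in \mathbb{Z}$) exhibits it as the $h_2$-image of $\prod_{\ell<k}(\frac{1}{n!}, x_\ell)^{m_\ell} \in H_2$.

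There is essentially no obstacle here: the proof is bookkeeping, and the only point where one needs to actually compute rather than invoke ``$G_2$ is abelian'' is relation \ref{def_H2}(\ref{powers_item}), which is a one-line factorial identity. The conceptual content is that $G_2$ is exactly the abelianization of $H_2$ quotiented by the center (equivalently, killing all the $z_{(x,y,d)}$'s), which is consistent with $H_2$ being nilpotent of class $2$ with center generated by the $z$'s (Claim~\ref{4.3A}(1)).
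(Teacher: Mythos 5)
Your proof is correct, and it takes the obvious and intended approach: check that the prescribed assignment on generators respects each of the defining relations from Definition~\ref{def_H2}(\ref{def_H2_1}) (using that $H_2$ is a presented group, as recorded in Claim~\ref{4.3A}(5)), then observe surjectivity by clearing denominators. The paper in fact gives no written proof for Claim~\ref{quotient_claim}, leaving it as a routine verification, so there is nothing to compare against; your verification of the factorial relation~\ref{def_H2}(\ref{powers_item}) and the commutator relation~\ref{def_H2}(\ref{using_k(n)}) correctly isolates the only two families of relations that do not collapse outright under $z_{(x,y,d)} \mapsto 0$, and the surjectivity argument is standard.
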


\begin{definition}\label{def_H10}
	\begin{enumerate}[(1)]
	\item $H_0[\mathfrak{m}] = H_0$ is the subgroup of $H_2$ generated by:
	$$\{(1, x) : x \in X \}.$$
	\item\label{def_H1} $H_1[\mathfrak{m}] = H_1[T] = H_1 = \{a \in H_2[\mathfrak{m}] : h_2(a) \in G_1[\mathfrak{m}]\}$. 
	\item For $Y \subseteq X$ and $\ell \in \{0, 1\}$, we let $H_\ell[Y] = H_\ell \cap H_{2}[Y]$.
	\item For $t \in T$ and $\ell \in \{0, 1, 2\}$, let $g^\ell_t = g^2_t \restriction H_\ell$.
\end{enumerate}	 
\end{definition}

\begin{cclaim}\label{quotient_claim_post} 
\begin{enumerate}[(1)]
	\item For $\ell \in \{0, 1\}$, we define $h_\ell = h_2 \restriction H_\ell$ (see \ref{def_H10}).
	\item For $\ell \in \{1, 2\}$, the homomorphism $h_\ell : H_\ell \rightarrow G_\ell$ is onto.
	\item\label{cong_gl} For $\ell \in \{1, 2\}$, $\mrm{ker}(h_\ell) = \mrm{Cent}(H_\ell) = \langle Z \rangle_{H_2} \cap H_\ell$, so $H_\ell/\mrm{Cent}(H_\ell) \cong G_\ell$.
	\item\label{quotient4} For $\ell \in \{1, 2\}$, the set $\{ (1, x)/\mrm{Cent}(H_\ell) : x \in X\}$ form a basis for  $H_\ell/\mrm{Cent}(H_\ell)$.
\end{enumerate}	
\end{cclaim}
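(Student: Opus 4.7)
Item (1) is a definition, nothing to prove. For (2), the surjectivity of $h_2$ is Claim~\ref{quotient_claim}; for $h_1$ the plan is to observe that every $a \in G_1 \subseteq G_2$ has a preimage $b \in H_2$ under $h_2$, and by the definition $H_1 = h_2^{-1}(G_1)$ from \ref{def_H10}(\ref{def_H1}) we have $b \in H_1$, hence $h_1(b) = a$. For the $\ell = 2$ case of (3), the equality $\mrm{Cent}(H_2) = \langle Z\rangle_{H_2}$ is Claim~\ref{4.3A}(1)(a); for $\ker(h_2) = \langle Z\rangle_{H_2}$ the inclusion $\supseteq$ is immediate from the definition of $h_2$, while for $\subseteq$ I would write $a \in \ker(h_2)$ in the normal form of Claim~\ref{4.3A}(3) as $a = \prod_{i<k}(\tfrac{1}{n!}, x_i)^{q_i}\cdot z$ with $z \in \langle Z\rangle_{H_2}$, apply $h_2$ to get $\sum_i \tfrac{q_i}{n!}x_i = 0$ in $G_2 = \bigoplus_{x \in X}\mathbb{Q}x$, and conclude $q_i = 0$ for all $i$ by $\mathbb{Q}$-linear independence of $X$, so $a = z \in \langle Z\rangle_{H_2}$.

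For the $\ell = 1$ case of (3), the chain $\langle Z\rangle_{H_2} \subseteq \ker(h_2) \subseteq H_1$ gives at once $\langle Z\rangle_{H_2} \cap H_1 = \langle Z\rangle_{H_2}$ and $\ker(h_1) = \ker(h_2) \cap H_1 = \langle Z\rangle_{H_2}$, and the inclusion $\mrm{Cent}(H_1) \supseteq \langle Z\rangle_{H_2}$ is immediate. The \emph{main obstacle} is the reverse $\mrm{Cent}(H_1) \subseteq \langle Z\rangle_{H_2}$. The plan is: given $a \in H_1 \setminus \langle Z\rangle_{H_2}$ with normal form $\prod_{i<k}(\tfrac{1}{n!}, x_i)^{q_i}\cdot z$ and some $q_{i_0} \neq 0$, exhibit $y \in X$ such that $(1, y) \in H_0 \subseteq H_1$ fails to commute with $a$. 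Using Fact~\ref{2nilpotent_fact} together with $(1, y) = (\tfrac{1}{n!}, y)^{n!}$ and $[(\tfrac{1}{n!}, x), (\tfrac{1}{n!}, y)] = z_{(x, y, a_{(x, y, n)})}$ from \ref{def_H2}, one computes
\[
  [a, (1, y)] \;=\; \prod_{i<k} z_{(x_i,\, y,\; q_i \, n!\, \cdot\, a_{(x_i, y, n)})},
\]
whose non-triviality reduces to finding $y$ such that, for some $i$ with $q_i \neq 0$, the pair $(x_i, y)$ is $E_2$-equivalent to some $(x_p, y_p)$ or $(y_p, x_p)$ (so $K_{(x_i, y)} = K_p = \mathbb{Z}_{p^\infty}$ and $a_{(x_i, y, n)}$ has order $p^n$) with $p$ large enough that $p^n \nmid q_i\, n!$. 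Since $v_p(q_i\, n!) = 0 < n$ whenever $p$ exceeds every prime divisor of $q_i$ and $p > n$, the existence of such $y$ follows by arranging the family $(x_p, y_p : p \in \mathbb{P})$ of \ref{the_choice}(\ref{the_choice_primes}) so that each $x \in X$ occurs (up to $E_2$) as the first coordinate of special pairs with arbitrarily large associated prime — a flexibility afforded by the construction of $\mathfrak{m}$ in \cite[Sec.~5]{1205}.

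For (4), the plan is direct: under the isomorphism $h_\ell : H_\ell/\mrm{Cent}(H_\ell) \xrightarrow{\sim} G_\ell$ obtained in (3), the set $\{(1, x)/\mrm{Cent}(H_\ell) : x \in X\}$ maps to $X \subseteq G_\ell$, and $X$ is a basis of $G_\ell$ in the sense of \ref{def_basis}: it is $\mathbb{Z}$-independent in $G_\ell \subseteq G_2 = \bigoplus_{x \in X}\mathbb{Q}x$ (independence in $G_2$ restricts to $G_\ell$), and every $g \in G_\ell$ has some nonzero integer multiple lying in $\bigoplus_{x \in X}\mathbb{Z}x = G_0$ (since $g$ is a $\mathbb{Q}$-combination of $X$), so $X \cup \{g\}$ is $\mathbb{Z}$-dependent, yielding maximality. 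The hardest step of the whole claim is, as noted, the commutator-witness argument for $\mrm{Cent}(H_1) \subseteq \langle Z\rangle_{H_2}$ in item~(3), where the richness of the $E_2$-structure and the prime-indexed pairs $(x_p, y_p)$ are essential.
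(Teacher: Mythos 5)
Your treatment of items (1), (2), (4), and the $\ell=2$ case of (3) is sound; the paper itself supplies no proof of this claim, so there is no ``paper's approach'' to compare against. The real issue is in the $\ell=1$ case of (3), the inclusion $\mrm{Cent}(H_1) \subseteq \langle Z\rangle_{H_2}$, which you correctly single out as the crux. Your commutator formula $[a, (1,y)] = \prod_{i<k} z_{(x_i,\, y,\, q_i n! \cdot a_{(x_i, y, n)})}$ is right, but the step that produces a witness $y$ rests on a false premise. You claim one can arrange \ref{the_choice}(\ref{the_choice_primes}) so that ``each $x \in X$ occurs (up to $E_2$) as the first coordinate of special pairs with arbitrarily large associated prime.'' But \ref{the_choice}(\ref{the_choice_primes})(b) stipulates that the concatenated sequence $(x_p : p \in \mathbb{P})^\frown (y_p : p \in \mathbb{P})$ is pairwise $E_1$-inequivalent, and $(x,y)\,E_2\,(x_p,y_p)$ forces $x\,E_1\,x_p$ (since $E_2$ is generated by applying the $f_t$'s coordinatewise, cf.\ \ref{the_mathfrakm_convention}(\ref{def_k(n)})--(\ref{def_E2)})). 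Hence for a fixed $x$ there is at most one $p$ with $x\,E_1\,x_p$ and at most one $p'$ with $x\,E_1\,y_{p'}$: the set of usable primes for $x$ has size at most two and cannot be made cofinal, let alone ``arbitrarily large.''

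This is not a cosmetic defect. If the (at most two) primes $p$ attached to $x_{i_0}$ are small enough that $p^n \mid q_{i_0}\, n!$, then $[a,(1,y)] = e$ for \emph{every} $y \in X$, and commuting against $H_0$-elements $(1,y)$ proves nothing. To separate such an $a$ from the center of $H_1$ one would have to commute it against elements $\bigl(\tfrac{1}{m!},y\bigr)^r \in H_1$ carrying genuine denominators, and membership of $\bigl(\tfrac{1}{m!},y\bigr)^r$ in $H_1$ is governed by which primes lie in $\mathbb{P}_y$ --- exactly the fine structure of $G_1$ from \cite[Sec.~5]{1205} to which Proviso~\ref{the_proviso} defers and which your argument does not engage. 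So the key inclusion $\mrm{Cent}(H_1) \subseteq \langle Z\rangle_{H_2}$ is not established by the proposal as written.
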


	\begin{cclaim}\label{g17}
	\begin{enumerate}[(1)]
	\item\label{g17_1} $H_2$ is generated by $\{(\frac{1}{n!}, x) : 0 < n < \omega, x \in X\}$.
	\item $g^2_t \restriction H_1 =: g^1_t =: g_t$ (cf. \ref{def_H2}(\ref{g2t})) is a partial automorphism of $H_1$.
	\item If $(t_n : n < \omega)$ is an infinite branch of $T$, then $\bigcup_{n < \omega} g^1_{t_n}$ is an embedding of $H_1$ into itself which is not onto, so $H_1$ is not co-Hopfian.
	\item The commutator subgroup of $H_1$ is its center (so $H_1 \in \mrm{NiGp}(2)$) and this subgroup is the intersection of $H_1$ with the center of $H_2$, i.e., $\langle Z \rangle_{H_2}$.
	\end{enumerate}
\end{cclaim}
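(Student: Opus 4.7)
The plan for \ref{g17}(1) is to start from the two families of generators $\{(\frac{1}{n!}, x)\}$ and $\{z_{(x, y, d)}\}$ of $H_2$ and absorb the $z$'s into the subgroup generated by the $(\frac{1}{n!}, x)$'s via the commutator relation \ref{def_H2}(\ref{using_k(n)}), namely $[(\frac{1}{n!}, x), (\frac{1}{n!}, y)] = z_{(x, y, a_{(x, y, n)})}$. Since, in the nontrivial case, $K_{(x, y)} = K_p$ is the Prüfer $p$-group generated by the sequence $(a_{(x, y, n)} : n \geq 1)$ (cf.\ Choice~\ref{the_choice}(\ref{Kxy})), every $z_{(x, y, d)}$ is expressible as a finite product of such commutators together with the relations \ref{def_H2}(\ref{itemc}) and \ref{def_H2}(\ref{di's_item}). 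For \ref{g17}(2), Claim~\ref{4.3A}(6) already provides $g^2_t$ as a partial embedding $H_2[\mrm{dom}(f_t)] \to H_2[\mrm{ran}(f_t)]$, so only $H_1$-preservation in both directions remains: if $h \in H_1$, then $h_2(g^2_t(h)) = \hat{f}_t(h_2(h))$, and $\hat{f}_t$ restricts to an isomorphism between $G_1 \cap H_{(2, t)}$ and $G_1 \cap I_{(2, t)}$ by the setup of \cite[Sec.~5]{1205} (compare \ref{lemma_pre_main_th}(\ref{convention_hatf_G1}),(\ref{onto+}) in the present paper).

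For \ref{g17}(3), let $\hat{g} := \bigcup_{n<\omega} g^1_{t_n}$. The compatibility $g^1_{t_n} \subseteq g^1_{t_{n+1}}$ follows from $f_{t_n} \subseteq f_{t_{n+1}}$ along the branch (see \ref{def_H2}(\ref{g2t})), so $\hat{g}$ is a well-defined partial embedding of $H_1$ into itself. Its domain equals $H_1$: the infinite branch exhausts the levels of $T$, so $\bigcup_n \mrm{dom}(f_{t_n}) = X$, and by item (1) each generator $(\frac{1}{n!}, x)$ lies in some $H_1[\mrm{dom}(f_{t_n})]$. For non-surjectivity, choose $y \in X \setminus \bigcup_n \mrm{ran}(f_{t_n})$, which exists by the analogue in \cite[Sec.~5]{1205} of condition (\ref{itemh_co}). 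Via the isomorphism $H_1/\mrm{Cent}(H_1) \cong G_1$ of \ref{quotient_claim_post}(\ref{cong_gl}) with basis $\{x/\mrm{Cent}(H_1) : x \in X\}$ from \ref{quotient_claim_post}(\ref{quotient4}), any element of $\mrm{ran}(\hat{g})$ projects to an element of $G_1$ whose support lies in $\bigcup_n \mrm{ran}(f_{t_n})$; but $(1, y) \in H_1$ projects to $y$, so $(1, y) \notin \mrm{ran}(\hat{g})$.

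For \ref{g17}(4), the chain $[H_1, H_1] \subseteq [H_2, H_2] \subseteq Z(H_2) = \langle Z\rangle_{H_2}$ follows from class-$2$ nilpotency (\ref{4.3A}(1)(a)) together with the observation (via item (1)) that commutators of the generating set $\{(\frac{1}{n!}, x)\}$ are precisely the $z_{(x, y, a_{(x, y, n)})}$. Also $\langle Z\rangle_{H_2} \subseteq H_1$ (since $h_2$ kills the $z$'s), and $\langle Z\rangle_{H_2} \subseteq Z(H_2) \cap H_1 \subseteq Z(H_1)$. To close the circle I must establish $Z(H_1) \subseteq \langle Z\rangle_{H_2}$ and $\langle Z\rangle_{H_2} \subseteq [H_1, H_1]$. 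For the first inclusion, use the unique normal form of \ref{4.3A}(3): any $h \in H_1$ writes as a product of $(\frac{1}{n!}, x_i)^{q_i}$'s followed by a central factor, and if the non-central part is nontrivial then, selecting $y \in X$ so that $(x_i, y) \in (x_p, y_p)/E_2$ for a suitable prime $p$ (available via Choice~\ref{the_choice}(\ref{the_choice_primes})), the commutator $[h, (1, y)]$ equals a nontrivial element of $K_p$, contradicting centrality. For the second, each $z_{(x, y, a_{(x, y, n)})}$ equals $[(\frac{1}{n!}, x), (\frac{1}{n!}, y)]$, and this commutator is in $[H_1, H_1]$ whenever $(\frac{1}{n!}, x), (\frac{1}{n!}, y) \in H_1$, i.e., whenever $\frac{x}{n!}, \frac{y}{n!} \in G_1$; the structure of $G_1$ in \cite[Sec.~5]{1205} guarantees this for cofinally many $n$ along the chain of divisibilities assigned to a pair $(x_p, y_p)/E_2$.

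\emph{The main obstacle} is the final step of item (4): one must verify that for every $(x, y)$ with $K_{(x, y)}$ nontrivial, sufficiently many elements $(\frac{1}{n!}, x), (\frac{1}{n!}, y)$ of $H_1$ exist to let the commutators $z_{(x, y, a_{(x, y, n)})}$ generate the full Prüfer summand $K_{(x, y)} = K_p$ inside $[H_1, H_1]$. This hinges on the precise divisibility data of $G_1$ from \cite[Sec.~5]{1205}, specifically the fact that elements $\frac{x}{n!}$ land in $G_1$ at an unbounded set of $n$ whenever $(x, y)$ is $E_2$-equivalent to some $(x_p, y_p)$; the bookkeeping is essentially the same as in the analogous step of \cite[Section~5]{1205}, and this is the only place where the specific construction of $G_1$ (rather than just its group-theoretic consequences) is actually used.
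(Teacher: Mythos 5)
Your decomposition matches the paper's for items (1)--(3). (Note: the paper's proof says ``Items (1)--(3) and (5) are easy. We prove (4)'', but what it actually writes out, in $(*_0)$--$(*_{10})$, is a proof of item \emph{(3)} --- the numbering in that sentence is a leftover from an earlier draft. So the paper declares items (1), (2), (4) easy and details (3).) Your (3) is in substance the same as the paper's: the paper cites \cite[Section~5]{1205} for $\bigcup_n \mrm{dom}(f_{t_n}) = X$, giving $\mrm{dom}(g^1) = H_1$, and cites \cite[5.4(d)]{1205} for a basis element never in any $\mrm{ran}(f_t)$; you invoke the same facts under slightly different labels.

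The genuine divergence is item (4), which the paper dismisses as easy and you flag as the main obstacle. You are right to be uneasy: the inclusion $\langle Z\rangle_{H_2} \subseteq [H_1, H_1]$ is not automatic. However, the \emph{form} of what you say you need --- $\frac{x}{n!}, \frac{y}{n!} \in G_1$ for cofinally many $n$ --- is stronger than necessary and almost certainly false in the construction (that would mean $x$ is divisible by \emph{every} small prime in $G_1$, which the choice of $(p_a : a \in G_0^+)$ rules out). The right move, via Fact~\ref{2nilpotent_fact}, is to take \emph{powers} $(\frac{1}{n!}, x)^{a}$ with $a = n!/p^{k}$; this lies in $H_1$ iff $p^k \mid x$ in $G_1$, so all one needs is unbounded $p$-divisibility of $x$ and $y$, not $n!$-divisibility. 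Even stated that way, there is a real worry: Choice~\ref{the_choice}(2)(c) deliberately makes $x_p + y_p$ \emph{not} divisible by $p$ in $G_1$, so at least one of $x_p, y_p$ fails to be $p$-divisible even once, and it is not clear on the face of things that some representative in the $E_2$-class enjoys the needed divisibility. You should pin down precisely which statement from \cite[Section~5]{1205} supplies this; your remark that ``the bookkeeping is essentially the same as in the analogous step of \cite[Section~5]{1205}'' is not correct as stated, because that paper's construction is abelian and has no commutator subgroup to compare against --- there is no analogous step there. A similar implicit hypothesis underlies your normal-form argument for $Z(H_1)\subseteq\langle Z\rangle_{H_2}$: you need, for each $x\in X$, some $y$ with $K_{(x,y)}\ne\{0\}$, i.e.\ you need the $E_1$-classes of $X$ to be exhausted by $\{x_p,y_p : p\in\mathbb{P}\}$. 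That exhaustion is in fact forced already by Claim~\ref{4.3A}(1)(a) (otherwise $(1,x)$ would be central in $H_2$ yet have nonzero image under $h_2$), so it is safe --- but it deserves to be said, since Choice~\ref{the_choice}(\ref{the_choice_primes}) does not state it explicitly.
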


	\begin{proof} Items (1)-(3) and (5) are easy. We prove (4). For ease of notation, for the remainder of the proof, for $\ell \in \{1, 2\}$, we let $g^\ell_{t_n} = g^\ell_n$ and $\bigcup_{n < \omega} g^\ell_{t_n} = g^\ell$. Similarly, we let $f^\ell_{t_n} = f^\ell_n$ and $\bigcup_{n < \omega} f^\ell_{t_n} = f^\ell$. So for $\ell \in \{1, 2\}$ and $n < \omega$ we have:
	\begin{enumerate}[$(*_0)$]
	\item $h_\ell \circ g^\ell_n = h_\ell \circ f^\ell_n$.
	\end{enumerate}
	 Now, we have the following:
	\begin{enumerate}[$(*_1)$]
	\item Each $g^\ell_n$ is a partial automorphism of $H_\ell$.
	\end{enumerate}
	[Why? By Item (2) of the present claim.]
	\begin{enumerate}[$(*_2)$]
	\item If $n < m$, then $g^\ell_n \subseteq g^\ell_m$.
	\end{enumerate}
	\begin{enumerate}[$(*_3)$]
	\item $g^\ell$ is a partial automorphism of $H_\ell$.
	\end{enumerate}
	\begin{enumerate}[$(*_4)$]
	\item For every $n < \omega$, $g^1_n \subseteq g^2_n$.
	\end{enumerate}
	\begin{enumerate}[$(*_5)$]
	\item $g^1 \subseteq g^2$.
	\end{enumerate}
	\begin{enumerate}[$(*_6)$]
	\item $\mrm{dom}(g^2) = H_2$.
	\end{enumerate}
	[By \cite[Section~5]{1205} and the definition of $g^2_t$ from \ref{def_H2}(\ref{g2t}) of the present claim, each $(\frac{1}{n!}, x) \in \mrm{dom}(g^2)$ and by Item (1) of the present claim such elements generate $H_2$.]
	\begin{enumerate}[$(*_7)$]
	\item $\mrm{dom}(g^1) = H_1$.
	\end{enumerate}
	[Why? By $(*_6)$ and the fact that for each $n < \omega$ we have that $g^1_n = g^2_n \restriction H_1$.]
	\begin{enumerate}[$(*_8)$]
	\item There is $x_* \in X$ such that $x \notin \mrm{ran}(f^1)$.
	\end{enumerate}
	[Why? Let $x_* \in X_0$, by \cite[5.4(d)]{1205}, for all $t \in T$, $x_* \notin \mrm{ran}(f^2_t)$ and so $x_* \notin \mrm{ran}(f^1_t)$.]
	\begin{enumerate}[$(*_9)$]
	\item $(1, x_*) \in H_1$ but $(1, x_*) \notin \mrm{ran}(f^1)$.
	\end{enumerate}
	[Why? $(1, x_*) \in H_1$ by $(*_7)$ and $(1, x_*) \notin \mrm{ran}(f^1)$ by $(*_8)$ and the definitions.]
	\begin{enumerate}[$(*_{10})$]
	\item $g^1$ is an embedding of $H_1$ into itself which is not onto.
	\end{enumerate}
\end{proof}

	\begin{cclaim}\label{2_nilpotent_cohop_claim} If the tree $T$ is well-founded, then the group $H_1[T] = H_1$ is co-Hopfian.
\end{cclaim}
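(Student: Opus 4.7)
The plan is to reduce co-Hopfianity of $H_1$ to the endo-rigidity of $G_1 = H_1/\mrm{Cent}(H_1)$ in the well-founded case, which is proved in \cite[Section~5]{1205}. Let $\pi \in \mrm{End}(H_1)$ be injective. Since $G_1$ is torsion-free while $\mrm{Cent}(H_1) = \bigoplus_{x, y \in X} K_{(x, y)}$ is torsion (cf.~\ref{4.3A}(2) and \ref{g17}(4)), the center coincides with the torsion subgroup of $H_1$. As $\pi$ preserves torsion, it preserves the center and induces an endomorphism $\bar\pi \in \mrm{End}(G_1)$, which is injective (if $\bar\pi(\bar a) = 0$ then $\pi(a)$ is torsion, so $a$ is torsion by injectivity of $\pi$). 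Applying the endo-rigidity theorem from \cite[Section~5]{1205} to the well-founded tree $T$, we get $\bar\pi = \mrm{mult}_m$ for some $m \in \mathbb{Z} \setminus \{0\}$.

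Next I would analyse $\pi$ on the center. Writing $\pi((\tfrac{1}{n!}, x)) = (\tfrac{1}{n!}, x)^m \cdot c_{(n, x)}$ with $c_{(n, x)} \in \mrm{Cent}(H_1)$, and using that central factors inside a commutator cancel together with Fact~\ref{2nilpotent_fact}, one computes
\[
\pi(z_{(x, y, a_{(x, y, n)})}) = [\pi((\tfrac{1}{n!}, x)),\; \pi((\tfrac{1}{n!}, y))] = [(\tfrac{1}{n!}, x),\, (\tfrac{1}{n!}, y)]^{m^2} = z_{(x, y,\, m^2 a_{(x, y, n)})}.
\]
Specialising to $(x, y) = (x_p, y_p)$, the element $a_{(x, y, 1)}$ has exact order $p$ in $K_p \cong \mathbb{Z}^\infty_p$ (cf.~\ref{the_choice}(\ref{pa=0})); since $\pi \restriction \mrm{Cent}(H_1)$ is injective, this forces $p \nmid m^2$, hence $p \nmid m$, for every $p \in \mathbb{P}$ (by \ref{the_choice}(\ref{the_choice_primes})). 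As $\mathbb{P}$ is infinite, $m \in \{1, -1\}$, so $m^2 = 1$ and the display shows $\pi$ fixes every $a_{(x, y, n)}$. Since $\{a_{(x, y, n)} : n \geq 1\}$ generates $K_{(x, y)}$ and $\mrm{Cent}(H_1)$ is the direct sum of the $K_{(x, y)}$'s, we conclude $\pi \restriction \mrm{Cent}(H_1) = \mrm{id}$.

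Surjectivity is then essentially free: given $b \in H_1$, since $\bar\pi = \pm\mrm{id}$ is an automorphism of $G_1$, we can lift to $a_0 \in H_1$ with $\pi(a_0) = b \cdot d$ for some $d \in \mrm{Cent}(H_1)$; then $\pi(a_0 d^{-1}) = \pi(a_0) \cdot \pi(d)^{-1} = b \cdot d \cdot d^{-1} = b$, using $\pi(d) = d$. The main obstacle I anticipate is the very first step, namely matching our $G_1 = G_1[\mathfrak{m}]$ precisely to the endo-rigidity result in \cite[Section~5]{1205} (whose notation we inherit, see Proviso~\ref{the_proviso}); a secondary subtlety is confirming that the unspecified central corrections $c_{(n, x)}$ really do not contaminate the commutator computation, which is exactly what Fact~\ref{2nilpotent_fact} and centrality of the $z$'s are designed to ensure.
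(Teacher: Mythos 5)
Your proof is correct and follows essentially the same route as the paper: pass to the abelian quotient $G_1 = H_1/\mrm{Cent}(H_1)$, invoke the endo-rigidity theorem from \cite[Section~5]{1205} to get $\bar\pi = \mathrm{mult}_m$, force $m = \pm 1$ via the commutator relation at $(x_p, y_p)$ together with Fact~\ref{2nilpotent_fact}, and then lift surjectivity back to $H_1$. The only cosmetic differences are that you identify $\mrm{Cent}(H_1)$ as the torsion subgroup of $H_1$ (the paper instead uses that commutators generate the center) and you establish the slightly stronger fact that $\pi$ fixes $\mrm{Cent}(H_1)$ pointwise, whereas the paper only shows $\mrm{Cent}(H_1) \subseteq \mrm{ran}(\pi)$ via the observation that every commutator $[a,b]$ equals $[a_1,b_1]$ for $a_1, b_1 \in \mrm{ran}(\pi)$; both versions of the lifting step work.
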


	\begin{proof} Suppose that the tree $T$ is well-founded. We use Claim~\ref{g17} freely. Let $\pi$ be an embedding of $H_1$ into itself. As $\pi$ maps the set of commutators of $H_1$ into itself and this set generates the center of $H_1$ we have that $\pi$ maps the center of $H_1$ into itself. Furthermore, as $\pi$ is one-to-one, it maps non-central elements of $H_1$ to non-central elements of $H_1$. Hence, $\pi$ induces an embedding $\hat{\pi}$ of $H_1/\mrm{Cent}(H_1)$ into itself. Up to renaming, $\hat{\pi}$ is then an embedding of $G_1 = G_1[\mathfrak{m}]$ into itself (cf. \ref{quotient_claim_post}(\ref{cong_gl})).
\noindent By \cite[Section~5]{1205} there is $m \in \mathbb{Z}^+$ such that $\hat{\pi}$ satisfies the following:
$$(1, x)/\mrm{Cent(H_1)} \mapsto (1, x)^{m}/\mrm{Cent(H_1)} \text{ (cf. \ref{quotient_claim_post}(\ref{quotient4}))}.$$
We want to show that $m = 1$ or $m = -1$ (notice that in \cite[Section~5]{1205} we do not necessarily have this, but only that $m \in \mathbb{Z}$). Toward contradiction, suppose first that $m > 1$ and let $p$ be a prime dividing $m$. Let $x, y \in X$ be $(x_p, y_p)$, for the fixed prime $p$ from the previous sentence (recall \ref{the_choice}(\ref{the_choice_primes})). Then we have for $a = a_{(x, y, 1)}$:
\begin{equation} [(1, x), (1, y)] = z_{(x, y, a)} \neq e_{H_1} \text{ and } z_{(x, y, a)}^p = e \text{ (so } z_{(x, y, a)}^{m^2} = e) \; \text{(cf. \ref{the_choice}(\ref{pa=0}))}.
\end{equation}
Now, as $\hat{\pi} : (1, x)/\mrm{Cent(H_1)} \mapsto (1, x)^{m}/\mrm{Cent(H_1)}$, for every $x \in X$, there is $c_x \in \mrm{Cent(H_1)}$ such that $f((1, x)) = (1, x)^{m}c_x$. Hence, as $\pi$ is $1$-to-$1$, we have:
$$\begin{array}{rcl}
e_{H_1} & \neq & \pi([(1, x), (1, y)]) \\
		& = & [\pi((1, x)), f((1, y))] \\
		& = & [(1, x)^m c_x, (1, y)^m c_y] \\
		& = & [(1, x)^m, (1, y)^m] \\
	    & = & [(1, x), (1, y)]^{m^2} \text{ \; [by Fact~\ref{2nilpotent_fact}]}\\
	    & = & z_{(x, y, a)}^{m^2},
\end{array}$$
where $a = a_{(x, y, 1)}$, and this contradicts $z_{(x, y, a)}^{m^2} = e_{H_1}$ (cf. Equation (1)). So $m > 1$ is impossible, and similarly $m < -1$ is also not possible. Hence $m \in \{1, -1\}$.
\newline We now show that $\pi$ is onto, thus concluding that $H_1$ is co-Hopfian. First note:
	\begin{enumerate}[$(*_1)$]
	\item if $a, b \in H_1$, then for some $a_1, b_1 \in \mrm{ran}(f)$ we have $[a, b] = [a_1, b_1]$.
	\end{enumerate}
We prove $(*_1)$. As $\hat{\pi}$ is onto, we can find $a_1 \in \mrm{ran}(\pi)$ and $c \in \mrm{Cent}(H_1)$ such that $a = a_1c$. Similarly, we can find $b_1 \in \mrm{ran}(\pi) $ and $d \in \mrm{Cent}(H_1)$ such that $b = b_1d$. Thus:
$$\begin{array}{rcl}
[a, b]  & = & [a_1c, b_1d] \\
		& = & (a_1c)^{-1} (b_1d)^{-1} (a_1c) (b_1d)  \\
		& = & (a_1^{-1}b_1^{-1}a_1b_1) (c^{-1}d^{-1}cd)\\
		& = & (a_1^{-1}b_1^{-1}a_1b_1) e_{H_1} \\
	    & = & [a_1, b_1].
\end{array}$$
Hence, we have:
\begin{enumerate}[$(*_2)$]
	\item the set of commutators of $H_1$ is included in $\mrm{ran}(\pi)$.
	\end{enumerate}
As the set of commutators of $H_1$ generates $\mrm{Cent}(H_1)$ and $\hat{\pi}$ is onto, $\pi$ is onto.
\end{proof}

	\begin{proof}[Proof of Theorem~\ref{main_th7}] By Claims \ref{g17}(4)(5) and \ref{2_nilpotent_cohop_claim}.
\end{proof}

\section{The co-Hopfian problem for countable AB}\label{sec_general_aim}

	We need the following three claims from \cite{1214}.
	
	\begin{observation}[{\cite[Obs.~2.17]{1214}}]\label{suff_cond} Let $G \in \mrm{AB}$. Then $G$ is non-co-Hopfian iff:
	\begin{enumerate}[$(\star)$]
	\item there are $f$ and $z  \in G$ such that:
	\begin{enumerate}[(a)]
	\item $f \in \mrm{End}(G)$;
	\item $f(x) \neq x$ for every $x \in G \setminus \{ 0 \}$;
	\item for every $x \in G$, $z \neq x - f(x)$.
	\end{enumerate}
	\end{enumerate}
\end{observation}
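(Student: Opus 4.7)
The plan is to prove the equivalence by exhibiting an explicit bijection between injective non-surjective endomorphisms $g$ of $G$ and pairs $(f, z)$ satisfying conditions (a)--(c), implemented by the transformation $g \longleftrightarrow f := \mrm{id}_G - g$. Since $G$ is abelian, $\mrm{id}_G - g$ is always an endomorphism when $g$ is, and the map $g \mapsto \mrm{id}_G - g$ is an involution on $\mrm{End}(G)$. The whole observation reduces to translating ``injective'' and ``not surjective'' through this involution.

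For the direction ``$G$ non-co-Hopfian $\Rightarrow (\star)$'', I would start from a witness $g \in \mrm{End}(G)$ which is injective but not surjective, and set $f := \mrm{id}_G - g$ and pick any $z \in G \setminus \mrm{ran}(g)$ (which exists by non-surjectivity). Clause (a) is immediate. For clause (b), $f(x) = x$ means $g(x) = 0$, which by injectivity of $g$ forces $x = 0$. For clause (c), $x - f(x) = x - (x - g(x)) = g(x)$ for all $x \in G$, so the condition $z \neq x - f(x)$ for all $x$ is exactly $z \notin \mrm{ran}(g)$, which holds by choice of $z$.

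For the converse direction ``$(\star) \Rightarrow G$ non-co-Hopfian'', given $(f, z)$ as in $(\star)$, set $g := \mrm{id}_G - f \in \mrm{End}(G)$. Injectivity of $g$ follows from clause (b): $g(x) = 0$ means $f(x) = x$, hence $x = 0$. Non-surjectivity follows from clause (c): for every $x \in G$ we have $g(x) = x - f(x) \neq z$, so $z \notin \mrm{ran}(g)$ and $g$ is not onto. Therefore $g$ is an injective but not surjective endomorphism, witnessing that $G$ is not co-Hopfian.

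There is no real obstacle here; the only point requiring the abelian hypothesis is that $x \mapsto x - g(x)$ is an endomorphism, which uses commutativity of addition. I would conclude by remarking that the two directions are formally symmetric under the involution $g \leftrightarrow \mrm{id}_G - g$ on $\mrm{End}(G)$, so the proof is essentially one computation performed twice.
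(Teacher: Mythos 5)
Your proof is correct, and the involution $g \leftrightarrow \mrm{id}_G - g$ is the natural (essentially unique) way to establish this equivalence; the paper itself gives no proof here, merely citing the external reference \cite[Obs.~2.17]{1214}, but that proof must proceed by the same computation, since the observation is nothing more than a rewording of ``injective, not surjective'' through this substitution.
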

	
	\begin{cclaim}[{\cite[Claim~3.2]{1214}}]\label{the_crucial_claim} Let $G \in \mrm{AB}$ be countable and reduced. Let also $p \in \mathbb{P}$, and suppose that $\mrm{Tor}_p(G)$ is infinite. Then:
	\begin{enumerate}[(1)]
	\item $G$ is not co-Hopfian;
	\item If in addition $\mrm{Tor}_p(G)$ is not bounded, then we can find $\bar{K}$ and $K$ such that:
	\begin{enumerate}
	\item $\bar{K} = (K_n : n < \omega)$ and $K = \bigoplus_{n < \omega} K_n \leq_* G$;
	\item $K_n \leq G$ is a non-trivial finite $p$-group;
	\item there is $f \in \mrm{End}(G)$ such that $\mrm{ran}(f) \subseteq K$ and for every $n < \omega$ we have that $\{0\} \neq f(K_n) \subseteq K_n$;
	\item $f$ is as in \ref{suff_cond}.
	\end{enumerate}
	\item If in addition to (2) $\mrm{Tor}_p(G)$ has height $\geq \omega$, then in (2)(b) we have that for some increasing $k(n)$,  $p^n(p^{k(n)}K_n) \neq \{ 0 \}$ and $x \in K_n \Rightarrow f(x) = p^{k(n)}(x)$.
	\end{enumerate}
\end{cclaim}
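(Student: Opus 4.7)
The plan is to apply Observation~\ref{suff_cond}: in each case it suffices to exhibit an $f \in \mrm{End}(G)$ with $f(x) \neq x$ for every $x \in G \setminus \{0\}$ together with a $z \in G$ such that $z \neq x - f(x)$ for any $x \in G$. We split on whether $\mrm{Tor}_p(G)$ is bounded, handling the bounded case of (1) directly and deriving the unbounded case of (1) from the construction producing (2) and (3).

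If $\mrm{Tor}_p(G)$ is bounded, then by Pr\"ufer's theorem (bounded pure subgroups of abelian groups are direct summands, and bounded $p$-groups split as direct sums of cyclic groups) we can write $G = (\bigoplus_{n < \omega} \langle e_n \rangle) \oplus D$, where pigeonhole applied to the cyclic decomposition of the infinite bounded $\mrm{Tor}_p(G)$ yields an infinite constant-order subfamily $(e_n : n < \omega)$ of generators of common order $p^k$. Define $g \in \mrm{End}(G)$ by $g \restriction D = 0$ and $g(e_n) = e_n - e_{n+1}$. Computing $g(\sum_n a_n e_n) = \sum_n (a_n - a_{n-1}) e_n$ (with the convention $a_{-1} = 0$) shows $g$ is injective, while $e_1 \notin \mrm{ran}(g)$, since a preimage would force $a_n = 1$ for all $n \geq 1$, contradicting finite support. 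Setting $f := 1 - g$ and $z := e_1$ verifies Observation~\ref{suff_cond}, settling (1) in this case.

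For (2), and hence (1) in the remaining unbounded case, take $K = \bigoplus_{n < \omega} K_n$ to be a basic subgroup of the reduced $p$-group $\mrm{Tor}_p(G)$; basicness gives $K \leq_* \mrm{Tor}_p(G)$ with each $K_n$ nontrivial cyclic of order $p^{k_n}$, and $\mrm{Tor}_p(G) \leq_* G$ (primary components of the torsion subgroup are always pure) yields $K \leq_* G$, covering clauses (a) and (b). Reducedness of $\mrm{Tor}_p(G)$ together with divisibility of $\mrm{Tor}_p(G)/K$ and the unboundedness of $\mrm{Tor}_p(G)$ forces $K$ itself to be unbounded, so after rearrangement the $k_n$ are strictly increasing. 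Under the additional hypothesis of (3) that $\mrm{Tor}_p(G)$ has height $\geq \omega$, each $K_n$ can be chosen to contain an element of $p$-height $\geq n + k(n)$ in $G$, for a strictly increasing sequence $(k(n) : n < \omega)$ with $p^n(p^{k(n)} K_n) \neq \{0\}$.

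The main obstacle is to produce an endomorphism $f$ of $G$ realizing clauses (c) and (d) simultaneously. Pure-injectivity of each bounded $K_n$, combined with $K \leq_* G$, guarantees that any homomorphism $K \to K_n$ --- in particular the one sending $K_n$ by multiplication by $p^{k(n)}$ and killing $K_m$ for $m \neq n$ --- extends to a homomorphism $G \to K_n$. The delicate part is to interleave these extensions so that: (i) for each $x \in G$ only finitely many of the component images are nonzero, producing a well-defined $f: G \to K$; (ii) $1-f$ is injective, which is automatic once $\mrm{ran}(f) \subseteq K$ and $1 - p^{k(n)}$ is a unit modulo $p^{k_n}$ for every $n$, since any nonzero fixed point of $f$ would lie in $\mrm{ran}(f) \subseteq K$; and (iii) a suitable $z \in G$ (chosen using the height data of (3) and the bookkeeping of the construction) fails to lie in $\mrm{ran}(1-f)$. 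This interleaving argument, which exploits the countability of $G$ and the freedom in the choice of extensions of the partial maps $K \to K_n$, is the heart of the proof and is carried out in detail in \cite[Claim~3.2]{1214}.
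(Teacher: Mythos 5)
The paper does not prove this claim; it is cited verbatim from \cite[Claim~3.2]{1214} and no proof appears here, so there is no argument of the paper's own to compare against. Evaluating your sketch on its merits, there is one concrete error and one more basic issue.

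The bounded case of (1) is wrong as written. You define $g$ by $g \restriction D = 0$ and $g(e_n) = e_n - e_{n+1}$ and set $f := 1 - g$. But Observation~\ref{suff_cond}(b) requires $f(x) \neq x$ for every nonzero $x$, i.e.\ that $g = \mathrm{id} - f$ be injective; your $g$ kills all of $D$, so unless $D = 0$ it is not injective, and $f$ fixes every nonzero $d \in D$. The repair is to set $g \restriction D = \mathrm{id}_D$ (equivalently $f \restriction D = 0$): your computation shows $g$ is injective on $\bigoplus_n \langle e_n \rangle$, the identity is injective on $D$, and directness makes $g$ injective on $G$; moreover $e_1 \notin \mrm{ran}(g)$ still holds, because $e_1$ lies in the first summand and any preimage $a + d$ would force $d = 0$ and then run into your finite-support contradiction.

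For (2) and (3), choosing a basic subgroup $K$ of the reduced unbounded $p$-group $\mrm{Tor}_p(G)$, passing to a subfamily so that the exponents of the $K_n$ grow, and invoking pure-injectivity of bounded groups are all reasonable ingredients (and transitivity of purity does make $K \leq_* G$). But the entire substance of the claim is the construction of a single $f \in \mrm{End}(G)$ with $\mrm{ran}(f) \subseteq K$, $f \restriction K_n = p^{k(n)}\cdot$, finitely many nonzero components on each $a \in G$, and a witness $z \notin \mrm{ran}(\mathrm{id}-f)$. Your paragraph names all these desiderata and then explicitly defers the interleaving/bookkeeping argument to \cite[Claim~3.2]{1214}. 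Thus (2), (3), and the unbounded half of (1) are not actually proved; pure-injectivity gives you extensions $G \to K_n$ of the individual coordinate maps, but it does not by itself guarantee they assemble into a well-defined map into the direct sum $K$, which is exactly what the omitted argument has to arrange. As it stands, the proposal is a plausible plan, with a fixable bug in the one part you did carry out, and the core of the claim left unproved.
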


\begin{cclaim}[{\cite[Claim~3.6]{1214}}]\label{quotient_withT_divisible} Let $G \in \mathrm{AB}$ and $p \in \mathbb{P}$. If $\mrm{Tor}_p(G)$ is bounded and $G/\mrm{Tor}_p(G)$ is not $p$-divisible, then $G$ is not co-Hopfian.
\end{cclaim}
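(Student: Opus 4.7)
The plan is to unpack Observation~\ref{suff_cond} into the equivalent statement that $G$ is not co-Hopfian iff $G$ admits an injective, non-surjective endomorphism: setting $\psi := \mrm{id}_G - f$ turns condition (b) into injectivity of $\psi$ and condition (c) into the requirement that $\mrm{ran}(\psi)$ omit $z$; conversely, given such a $\psi$, set $f := \mrm{id}_G - \psi$ and pick any $z \in G \setminus \mrm{ran}(\psi)$. Thus I only need to produce such a $\psi$ on $G$.

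The first ingredient is the splitting of $T := \mrm{Tor}_p(G)$ as a direct summand. Since $G/\mrm{Tor}(G)$ is torsion-free, $\mrm{Tor}(G)$ is pure in $G$; since the primary decomposition makes $T$ a direct summand of $\mrm{Tor}(G)$, $T$ is pure in $\mrm{Tor}(G)$; by transitivity of purity, $T$ is pure in $G$. Combined with the boundedness hypothesis on $T$, the classical Pr\"ufer--Baer theorem (a bounded pure subgroup of an abelian group is a direct summand) produces a decomposition $G = T \oplus H$ for some $H \leq G$.

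The second ingredient is to combine the two hypotheses. The isomorphism $H \cong G/T$ together with non-$p$-divisibility of $G/T$ gives $pH \subsetneq H$, and $H$ is $p$-torsion-free since $\mrm{Tor}_p(H) \subseteq \mrm{Tor}_p(G) \cap H = T \cap H = \{0\}$. These two facts make the endomorphism
\[
\psi : G \to G, \qquad \psi(t + h) := t + p\,h \quad (t \in T,\ h \in H),
\]
work: if $\psi(t+h) = 0$ then $t = 0$ and $ph = 0$, so $h = 0$ by $p$-torsion-freeness of $H$, giving injectivity; and $\mrm{ran}(\psi) = T \oplus pH$ is a proper subgroup of $T \oplus H = G$ since $pH \neq H$. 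Translating back via Observation~\ref{suff_cond}, $G$ is not co-Hopfian.

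I do not anticipate a genuine obstacle: the only non-elementary input is the Pr\"ufer--Baer splitting, after which the candidate $\psi$ is essentially forced. Without the splitting one would have to construct $\psi$ by a more delicate perturbation of multiplication by $p$ on all of $G$, but purity plus boundedness of $T$ makes this detour unnecessary.
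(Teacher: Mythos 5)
Your proof is correct, and every step checks out: $\mrm{Tor}_p(G)$ is pure in $G$ (direct summand of $\mrm{Tor}(G)$, which is pure), bounded by hypothesis, hence a direct summand $G = T \oplus H$ by the Pr\"ufer--Baer/Kulikov theorem; $H \cong G/T$ is $p$-torsion-free and $pH \subsetneq H$, so $\psi(t+h) = t + ph$ is an injective non-surjective endomorphism. Note that the paper does not include a proof of this claim but cites it as \cite[Claim~3.6]{1214}, so there is nothing in the text here to compare against; your argument is the natural one and almost certainly matches the cited source. One small remark: the detour through Observation~\ref{suff_cond} is harmless but unnecessary --- exhibiting an injective non-surjective endomorphism already establishes non-co-Hopfianity directly from Definition~\ref{the_rigidity_def}(2), without translating into the $(f,z)$ formulation and back.
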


	\begin{definition}\label{dense_def} We say that a subgroup $H$ of a reduced abelian group $G$ is dense in the $\mathbb{Z}$-adic topology when $G/H$ is a divisible group. 
\end{definition}

	\begin{remark} Def.~\ref{dense_def} could be formulated using indeed topological (or even metric) notions, but by \cite[Sec.~6, Ex.~10(b)]{fuch_vol1} this is equivalent to the above definition.
\end{remark}

	\begin{cclaim}\label{i2} Let $G \in \mrm{AB}_\omega$ and suppose that $G$ is co-Hopfian, then:
	\item $G$ can be represented as $G_1 \oplus G_2$, where
	\begin{enumerate}[(a)]
	\item $G_1$ is divisible of finite rank (and hence co-Hopfian);
	\item $G_2$ is reduced and co-Hopfian;
	\item for $\mrm{Tor}(G_2) = K = \bigoplus \{K_{(2, p)} : p \in \mathbb{P} \} \leq G_2$, we have:
	\begin{enumerate}[$(\cdot_1)$]
	\item $K_{(2, p)} = \mrm{Tor}_p(G_2) = K_p$ is finite;
	\item $K$ is dense in $G_2$ in the $\mathbb{Z}$-adic topology;
	\item any $\pi \in \mrm{End}(G_2)$ maps $K_{(2, p)}$ to $K_{(2, p)}$;
	\item any $1$-to-$1$ $\pi \in \mrm{End}(G_2)$ maps $K$ onto itself (as $K$ is co-Hopfian);
	\item to any $a \in G_2$ we can associate a unique sequence $\{c_{(a, p)} : p \in \mathbb{P} \}$ s.t.:
	\begin{enumerate}[(i)]
	\renewcommand\labelitemi{--}
	\item\label{the_ca} $c_{(a, p)} \in K_{(2, p)}$;
	\item  in $G_2$, $a - c_{(a, p)}$ is divisible by $p^n$, for every $n < \omega$;
	\item $a \mapsto c_{(a, p)}$ is a projection from $G_2$ onto $K_{(2, p)}$, furthermore, this map is the identity on $K_p$ and $0$ on $K_q$ for $q \neq p$;
	\item $a - \sum \{c_{(a, p)} : p \in \mathbb{P}, p < n \}$ is divisible by $(n-1)!$;
	\item $a \mapsto (c_{(a, p)} : p \in \mathbb{P})$ embeds $G_2$ into $\prod \{K_{(2, p)} : p \in \mathbb{P} \}$. 
	\end{enumerate}
	\item if $\pi \in \mrm{End}(G_2)$ and $a \in G_2$, then $\pi(c_{(a, p)}) = c_{(\pi(a), p)}$, for $p \in \mathbb{P}$.
	\end{enumerate}
	\end{enumerate}\end{cclaim}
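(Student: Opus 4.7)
The plan is to exploit that direct summands of co-Hopfian groups are co-Hopfian and then to use Claims~\ref{the_crucial_claim} and~\ref{quotient_withT_divisible} to pin down the structure of the reduced part. First I would write $G = G_1 \oplus G_2$ with $G_1$ the maximal divisible subgroup and $G_2$ reduced (divisible subgroups always split off). Both summands inherit co-Hopfianness because one can extend a non-surjective injective endomorphism of either by the identity on the other. Using the countable classification of divisible abelian groups together with the fact that $\mathbb{Q}^{(\omega)}$ and $\mathbb{Z}(p^\infty)^{(\omega)}$ each admit a proper shift embedding, the divisible summand $G_1$ has finite torsion-free rank and finite $p$-rank for every $p$, giving clauses~(a),(b).

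For $G_2$, Claim~\ref{the_crucial_claim}(1) forces $K_{(2,p)} = \mrm{Tor}_p(G_2)$ to be finite for every $p$ (otherwise $G_2$ would fail to be co-Hopfian), which is $(\cdot_1)$. Since each $\mrm{Tor}_p(G_2)$ is now bounded, Claim~\ref{quotient_withT_divisible} implies $G_2/\mrm{Tor}_p(G_2)$ is $p$-divisible for every $p$. From the short exact sequence $0 \to K/K_p \to G_2/K_p \to G_2/K \to 0$, observing that $K/K_p = \bigoplus_{q\neq p}K_q$ is $p$-divisible (its elements have order coprime to $p$), one deduces $G_2/K$ is $p$-divisible for every $p$, hence divisible, which is $(\cdot_2)$. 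Clauses $(\cdot_3),(\cdot_4)$ are then immediate: any endomorphism preserves the $p$-torsion subgroup, and $K = \bigoplus_p K_p$ is co-Hopfian because an injective endomorphism of $K$ restricts to an injection of each finite $K_p$ into itself, hence a bijection there, and therefore bijects $K$ onto $K$.

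The bulk of the argument is the construction of $c_{(a,p)}$. I define it to be the unique element of $K_p$ with $a - c_{(a,p)} \in \bigcap_m p^mG_2$. For existence, using $\mathbb{Z}$-adic density I write, for each $m$, $a = p^m b_m + k_m$ with $k_m \in K$; splitting $k_m = (k_m)_p + (k_m)'$ along $K_p$ and $\bigoplus_{q\neq p}K_q$, the component $(k_m)'$ is itself in $p^m G_2$ because every $K_q$ ($q\neq p$) is $p$-divisible, so $a - (k_m)_p \in p^mG_2$; in the finite group $K_p$ the sequence $((k_m)_p)_m$ must stabilize. The main obstacle is the uniqueness (equivalently, $K_p \cap \bigcap_m p^mG_2 = 0$): given $d \in K_p$ with $p^Nd = 0$ (where $p^N = \exp(K_p)$) and $d = p^{N+1}e$ for some $e \in G_2$, one gets $p^{2N+1}e = 0$, so $e \in \mrm{Tor}_p(G_2) = K_p$, whence $d \in p^{N+1}K_p = 0$. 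This is the point where the reducedness of $G_2$ and the finiteness of $K_p$ combine most crucially.

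The remaining clauses of $(\cdot_5)$ follow quickly from uniqueness. Additivity of $a \mapsto c_{(a,p)}$ is clear because $(a+a')-(c_{(a,p)}+c_{(a',p)}) \in p^mG_2$ for every $m$; it is the identity on $K_p$ (take $c = a$) and $0$ on $K_q$ for $q\neq p$ (since $K_q$ is $p$-divisible so $a \in \bigcap_m p^m G_2$ already). For $(\cdot_5)$(iv), by CRT it suffices to see that $a - \sum_{p<n}c_{(a,p)}$ is divisible by $p^{v_p((n-1)!)}$ for each $p<n$, which holds because $a-c_{(a,p)}$ is divisible by every $p$-power while the remaining summands $c_{(a,q)}$ with $q\neq p$ have order coprime to $p$, hence are themselves in $p^mG_2$ for every $m$. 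For $(\cdot_5)$(v), if $c_{(a,p)} = 0$ for all $p$, then $a \in \bigcap_{n\geq 1}nG_2$, the maximal divisible subgroup of $G_2$, which is $0$ by reducedness. Finally, $(\cdot_5)$(vi) follows from uniqueness once one notes that $\pi(c_{(a,p)}) \in K_p$ (endomorphisms preserve $p$-torsion) and $\pi(a) - \pi(c_{(a,p)}) = \pi(a-c_{(a,p)}) \in p^mG_2$ for every $m$.
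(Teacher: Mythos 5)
Your overall route mirrors the paper's: split off the maximal divisible subgroup, invoke Claims~\ref{the_crucial_claim} and~\ref{quotient_withT_divisible} for $(\cdot_1)$ and $(\cdot_2)$, construct $c_{(a,p)}$ via divisibility of $G_2/K$ plus finiteness of $K_p$, and prove uniqueness by purity and boundedness of $K_p$. A few points where you are tidier than the paper: you give a direct argument that summands inherit co-Hopfianness (extending by the identity on the complement) instead of citing Beaumont--Pierce; you spell out the passage from ``$G_2/\mrm{Tor}_p(G_2)$ is $p$-divisible for all $p$'' to ``$G_2/K$ divisible'' via the short exact sequence; and your argument for $(\cdot_5)(iv)$ via coprimality and the fact that $c_{(a,q)}$ (order a $q$-power) is automatically $p^\infty$-divisible for $p\neq q$ is cleaner than the paper's ideal manipulation $(\star_2)$--$(\star_3)$.

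There is, however, a genuine gap at $(\cdot_5)(v)$. You write that $\bigcap_{n\geq 1}nG_2$ is \emph{the maximal divisible subgroup} of $G_2$ and hence vanishes by reducedness. This is false in general: $\bigcap_n nG$ is the first Ulm subgroup $G^1$, which contains but can strictly contain the maximal divisible subgroup, and there exist reduced abelian groups with $G^1\neq 0$. The difficulty is precisely that an element divisible by every $n$ need not admit a \emph{coherent} chain of divisors $b_0, b_1, \dots$ with $b_0 = a$ and $n_k b_{k+1} = b_k$; without coherence one cannot produce a divisible subgroup. The paper closes this gap with its step $(\star_4)$: starting from $a$ with all $c_{(a,p)}=0$ and any prime $q$, it builds $b$ with $qb = a$ \emph{and} all $c_{(b,p)}=0$ (first dividing by $q^{|K_q|}$, then subtracting off $c_{(b_0,q)}$, then using the Bézout identity $iq^m+jp^\ell=1$ to get $p$-divisibility for $p\neq q$). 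Iterating this yields a strictly ascending chain $\mathbb{Z}a\subsetneq\mathbb{Z}b_1\subsetneq\cdots$ whose union is a nonzero divisible subgroup of $G_2$, contradicting reducedness. You would need to supply an argument of this type; the single sentence about the maximal divisible subgroup does not suffice, and $\bigcap_n nG_2 = 0$ is essentially equivalent to the embedding $(\cdot_5)(v)$ you are trying to prove, so it cannot simply be asserted.

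One further small remark: your characterization of when the divisible summand is co-Hopfian (finite torsion-free rank and finite $p$-rank for every $p$) is the correct one, and it is strictly weaker than ``finite rank'' read in the usual sense of maximal independent set: the group $\bigoplus_p\mathbb{Z}(p^\infty)$ is co-Hopfian, divisible, and of infinite rank. The paper's phrasing of clause (a) is somewhat loose there; your version is the one that actually holds and the one that downstream arguments use.
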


	\begin{proof} Let $G_1 \leq G$ be a maximal divisible subgroup of $G$, it is known that there is reduced $G_2 \leq G$ such that $G = G_1 \oplus G_2$.  Now, the fact that $G_1$ and $G_2$ are co-Hopfian is by \cite[Lemma~3]{b&p}. The fact that $G_1$ is as in (a) is well-known. Concerning (c), $(\cdot_1)$ is by \ref{the_crucial_claim}, $(\cdot_2)$ is by \ref{quotient_withT_divisible}, $(\cdot_3)$ is clear, and $(\cdot_4)$ is again by \cite[Lemma~3]{b&p}. We prove $(\cdot_5)$. To this extent, fix $a \in G_2$. For every prime $p$ and $n < \omega$ there is $c = c_{(a, p^n)} \in K$ such that $G_2 \models p^n \mid \, (a-c)$ (as $G_2/K$ is divisible, recalling \ref{quotient_withT_divisible}). Also, clearly $c = \sum \{c_q : q \text{ prime } q < q_*\}$ for some $q_* < \omega$ and $c_q \in  K_{(2, q)}$. But $q \neq p$ implies $p^n K_{(2, q)} = K_{(2, q)}$, so $p^n \mid c_q$. Thus w.o.l.g. $p \neq q$ implies $c_q = 0$. Hence, there is $c = c_{(a, p^n)} \in K_{(2, p)}$ as above. Now, as $K_{(2, p)}$ is finite, some $c \in K_{(2, p)}$ occurs for infinitely many $n$, hence w.l.o.g. $c_{(a, p^n)} = c_{(a, p)}$. So $G_2 \models p^n \mid \, (a-c_{(a, p)})$, for every $n < \omega$, as if $a-c_{(a, p)}$ is divisible by $p^n$, then it is divisible by $p^m$ for all $1 \leq m \leq n$. 
Also clearly for $n$ large enough $p^n K_{(2, p)} = { 0 }$, hence $c_{(a, p)}$ is unique. We elaborate on the uniqueness of $c_{(a, p)}$. Let $c_1, c_2 \in K_{(2, p)} = K_p$ be such that $p^\infty \, \vert \, (a - c_\ell)$, for $\ell \in \{1, 2\}$ and let $n < \omega$ be such that $p^nK_p = \{0\}$. Clearly, $p^n \, \vert \, (a - c_1), (a - c_2)$ and so we have the following condition:
	$$p^n \, \vert \, ((a - c_1) - (a - c_2)) = c_2 - c_1 := c_*.$$
Hence, there is $d \in G$ such that $p^nd = c_*$ but $c_* \in K_p = \mrm{Tor}_p(G)$, hence $d \in K_p$ (as $K_p$ is pure in $G$ being $K_p$ finite), which implies that $p^nd = 0 = c_*$ (as $n < \omega$ was chosen so that $p^nK_p = \{0\}$). Hence, $c_1 = c_2$, as wanted, i.e., $c_{(a, p)}$ is unique.

\smallskip
\noindent 
Now, $(\cdot_5)(i)$-$(iii)$ are clear from what we observed above, we prove $(\cdot_5)(iv)(v)$. Concerning $(\cdot_5)(iv)$, note that for every $p \in \mathbb{P}$ and $m < \omega$, $a - c_{(a, p)}$ is divisible by $p^m$, so let $b_{(p, m)} \in G_2$ be such that $p^m b_{(p, m)} = a - c_{(a, p)}$. Let now $m < \omega$ be large enough such that for every prime $p < n$ we have the following:
\begin{equation}\tag{$\star_1$}
p^m K_{(2, p)} = \{0\},
\end{equation} and let $k = k_m = \prod_{p \in \mathbb{P}_{< n}} p^m$, where $\mathbb{P}_{< n} = \{p \in \mathbb{P} : p < n \}$. For $p \in \mathbb{P}_{< n}$ we have:
\begin{equation}\tag{$\star_2$}
k b_{(p, m)} = \frac{k}{p^m}(a - c_{(a, p)}).
\end{equation}
Let $\mathcal{U}$ be the ideal of the ring $\mathbb{Z}$ generated by $\{\frac{k}{p^m} : p \in \mathbb{P}_{< n} \}$, we claim:
\begin{enumerate}[$(\star_{2.1})$]
	\item  $1 \in \mathcal{U}$.
\end{enumerate}
Why $(\star_{2.1})$? Let $\mathcal{U} = \mathbb{Z}a$ (it is well-known that every ideal of $\mathbb{Z}$ has this form). As $\frac{k}{p^m} \in \mathcal{U}$, necessarily $a \neq 0$, so w.l.o.g. $a > 0$ and $a \vert \frac{k}{p^m}$. \noindent
Hence, $a$ has the form $\prod \{p^{i(p)} : p \in \mathbb{P}_{< n}\}$. Now, for each $p \in \mathbb{P}_{< n}$, $a \vert \frac{k}{p^m}$, hence, for each $p \in \mathbb{P}_{< n}$, ${i(p)} = 0$, and so $a = 1$, as wanted.

\noindent 
\smallskip
Hence, by $(\star_{2.1})$, there are $\ell_p$, for $p \in \mathbb{P}_{< n}$ such that $\sum \{\ell_p \frac{k}{p^m} : p \in \mathbb{P}_{< n}\} = 1$. 
	\begin{enumerate}[$(\star_3)$]
\item We have the following:
$$\begin{array}{rcl}
a - \sum_{p \in \mathbb{P}_{< n}} c_{(a, p)} & = & 
        1(a - \sum_{p \in \mathbb{P}_{< n}} c_{(a, p)}) \\
  & = & (\sum_{p \in \mathbb{P}_{< n}} \ell_p \frac{k}{p^m}) (a - \sum_{p \in \mathbb{P}_{< n}} c_{(a, p)}) \\
  & = & \sum_{p \in \mathbb{P}_{< n}} (\ell_p \frac{k}{p^m} (a - \sum_{p \in \mathbb{P}_{< n}} c_{(a, p)})) \\
  & = & \sum_{p \in \mathbb{P}_{< n}} (\ell_p (\frac{k}{p^m} a - \frac{k}{p^m} \sum_{p \in \mathbb{P}_{< n}} c_{(a, p)})) \\
  & = & \sum_{p \in \mathbb{P}_{< n}} (\ell_p (\frac{k}{p^m} a - \frac{k}{p^m} c_{(a, p)})) \\
  & = & \sum_{p \in \mathbb{P}_{< n}} (\ell_p (\frac{k}{p^m} (a - c_{(a, p)})) \\
  & = &  \sum_{p \in \mathbb{P}_{< n}} (\ell_p k b_{(p, m)})\\
  & = &  k_m (\sum_{p \in \mathbb{P}_{< n}} \ell_p b_{(p, m)}),\\
\end{array}$$
\end{enumerate}
where above in order to pass from the fifth to the sixth term we used $(\star_1)$ and the choice of $m$, and to pass from the seventh to the eighth term we used $(\star_2)$. So, $k = k_m \vert \, a - \sum_{p \in \mathbb{P}_{< n}} c_{(a, p)}$, and so  $(\cdot_5)(iv)$ \mbox{holds, as for $m$ large enough $(n-1)! \vert \, k_m$.}

\smallskip
\noindent
Concerning  $(\cdot_5)(v)$, we shall show that if $a \in G_2$ and $\bigwedge_{p \in \mathbb{P}} c_{(p, a)} = 0$, then $a = 0$, as this shows that $a \mapsto (c_{(a, p)} : p \in \mathbb{P})$ embeds $G_2$ into $\prod \{K_{(2, p)} : p \in \mathbb{P} \}$. Now:
	\begin{enumerate}[$(\star_4)$]
	\item if $a \in G_2$ and $\bigwedge_{p \in \mathbb{P}} c_{(a, p)} = 0$, then for every prime $q$ for some $b \in G_2$ we have that $qb = a$ and $\bigwedge_{p \in \mathbb{P}} c_{(b, p)} = 0$.
\end{enumerate}
But before proving $(\star_4)$ we show:
\begin{enumerate}[$(\star_5)$]
	\item $(\star_4)$ suffices to establish $(\cdot_5)(v)$.
\end{enumerate}
[Why? Assuming that $a \in G_2$ and $\bigwedge_{p \in \mathbb{P}} c_{(p, a)} = 0$, using $(\star_4)$ we can choose $(b_n : n < \omega)$ such that $b_0 = a$ and $p_n b_{n+1} = b_n$, where $p_n \in \mathbb{P}$ and for all $p  \in \mathbb{P}$ for infinitely many $n < \omega$ we have that $p_n = p$. Hence, if $a \neq 0$, then $\bigcup_{n < \omega} \mathbb{Z}b_n$ is a non-trivial divisible subgroup of $G_2$, but $G_2$ is reduced, a contradiction.]
\newline We are then only left to prove $(\star_4)$. Fix $q \in \mathbb{P}$ and let $m = |K_q|$, then there is $b_0 \in G_2$ such that $q^m b_0 = a$, as $G_2 \models q^\infty \vert(a - c_{(a, q)})$ and by assumption $c_{(a, q)} = 0$. 
\begin{enumerate}[$(\star_{4.1})$]
	\item Let $b' = b_0 - c_{(b_0, q)}$.
\end{enumerate}
Then we have:
	\begin{enumerate}[$(\star_{4.2})$]
	\item 
	\begin{enumerate}[$(\cdot_1)$]
	\item $c_{(b', q)} = 0$;
	\item $q^mb' = q^mb_0 - q^m c_{(b_0, q)} = a - 0 = a$ (as $m = |K_q|$ and $c_{(b_0, q)} \in K_q$);
	\item if $q \neq p \in \mathbb{P}$, then $p^\infty \vert \, b'$.
	\end{enumerate}
\end{enumerate}
[Why $(\star_{4.2})$? $(\cdot_1)$ is because for every $n < \omega$, $q^n x = b_0 - c_{(b_0, q)} = b' = b' - 0$ and the sequence $(c_{(b', p)} : p \in \mathbb{P})$ is unique.  $(\cdot_2)$ is clear. Concerning $(\cdot_3)$, letting $0 < \ell < \omega$ and supposing that $a = p^\ell a_1$, we can find $i, j \in \mathbb{Z}$ such that $iq^m + jp^\ell = 1$, and so:
$$\begin{array}{rcl}
b' & = & (iq^m + jp^\ell) b' \\
  & = & iq^m b' + jp^\ell b' \\
  & = & ia + jp^\ell b' \\
  & = & i p^\ell a_1 + jp^\ell b' \\
  & = & p^\ell (i a_1) + p^\ell (jb') \\
   & = & p^\ell (i a_1 + jb'). \\
\end{array}$$
Hence, since $p^\infty \vert \, a$ (recalling $\bigwedge_{p \in \mathbb{P}} c_{(a, p)} = 0$), we have that $p^\infty \vert \, b'$, as wanted.] 
\newline Now, the $b'$ from $(\star_{4.2})$ is almost as wanted for $(\star_{4})$. To conclude it suffices to let $b = q^{m-1}b'$, which is easily seen to be as wanted. This concludes the proof of $(\star_{4})$.

\smallskip 
\noindent This suffices for $(\cdot_5)$. We now prove $(\cdot_6)$. For every prime $p$ and $0 < n < \omega$, $p^n \mid (a - c_{(a, p)})$, but, as $\pi \in \mrm{End}(G_2)$, we have that $p^n \mid (\pi(a) - \pi(c_{(a, p)}))$ and $\pi$ maps $\mrm{Tor}_p(G)$ into itself hence $\pi(c_{(a, p)}) \in \mrm{Tor}_p(G) = G_{(2, p)}$. Thus, the sequence $(c_{(\pi(a), p)} : p \in \mathbb{P})$ behaves as in $(\cdot_5)$ with respect to $\pi(a)$ and so we are done proving $(\cdot_6)$, as the sequence in $(\cdot_5)$ is unique. This concludes the proof of the claim.
	\end{proof}
	
	\begin{corollary}\label{hammer_corollary} In the context of Claim~\ref{i2} and letting $G_2$ and $K = \bigoplus \{K_{(2, p)} : p \in \mathbb{P} \}  = \bigoplus \{K_{p} : p \in \mathbb{P} \}$ be there, we have (with respect to the $\mathbb{Z}$-adic topology):
	\begin{enumerate}[(a)]
	\item the $\mathbb{Z}$-adic completion of $K$ is the group $\prod \{K_{p} : p \in \mathbb{P} \}$;
	\item $K$ is dense in $G_2$ (i.e., every element in $G_2$ is the limit of a Cauchy sequence), and so w.l.o.g. $G_2$ is a countable subgroup of $\prod_{p \in \mathbb{P}} K_{p}$ containing $K = \oplus_{p \in \mathbb{P}} K_{p}$;
	\item $(\sum \{c_{(a, p)} : p \in \mathbb{P}, p < n \}: n < \omega)$ (cf. \ref{i2}$(c)(\cdot_5)$) is Cauchy with limit $a$.
	\end{enumerate}
\end{corollary}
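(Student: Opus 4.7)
All three assertions should fall out of Claim~\ref{i2}$(c)$ together with elementary facts on $\mathbb{Z}$-adic completions, so the plan is to dispose of (c) and (b) first, since they are essentially restatements of \ref{i2}$(c)(\cdot_2)$, \ref{i2}$(c)(\cdot_5)(iv)$--$(v)$, and then to verify (a) by a direct computation of an inverse limit, which I expect to be the only step requiring some care.

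For (c), the partial sum $s_n := \sum\{c_{(a,p)} : p \in \mathbb{P}, p < n\}$ lies in $K$, and by \ref{i2}$(c)(\cdot_5)(iv)$ satisfies $(n-1)! \mid (a - s_n)$ in $G_2$. Hence, for any $N < \omega$, once $n$ is large enough that $N \mid (n-1)!$, we have $a - s_n \in N G_2$, so $(s_n : n < \omega)$ is Cauchy in the $\mathbb{Z}$-adic topology on $G_2$ with limit $a$. For (b), density of $K$ in $G_2$ is then immediate from (c) (or directly from \ref{i2}$(c)(\cdot_2)$ together with Def.~\ref{dense_def}, as $G_2/K = G_2/\mrm{Tor}(G_2)$ is divisible by \ref{quotient_withT_divisible}). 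The ``w.l.o.g.'' part is supplied by \ref{i2}$(c)(\cdot_5)(v)$: the map $a \mapsto (c_{(a,p)} : p \in \mathbb{P})$ is an injective homomorphism $G_2 \hookrightarrow \prod_p K_p$, and by \ref{i2}$(c)(\cdot_5)(iii)$ it restricts to the obvious inclusion $\bigoplus_p K_p \hookrightarrow \prod_p K_p$ on $K$ itself.

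For (a), which will be the main computational step, the plan is as follows. Since each $K_p$ is a finite $p$-group of exponent $p^{k_p}$, one has $m K_p = 0$ whenever $p^{k_p} \mid m$, and otherwise $m K_p = p^{v_p(m)} K_p$. Hence, for $n \in \mathbb{Z}^+$ ranging under divisibility, $K/nK = \bigoplus_p K_p / n K_p$, and for each fixed prime $p$ the system $(K_p/nK_p)_n$ stabilizes at $K_p$ once $v_p(n) \geq k_p$. The key point will be to verify that passing to the inverse limit converts the infinite direct sum into the direct product:
\[
\varprojlim_n \bigoplus_{p \in \mathbb{P}} K_p / n K_p \;=\; \prod_{p \in \mathbb{P}} K_p,
\]
because a coherent family of representatives prescribes, independently for each $p$, a well-defined element of $K_p$, with no finite-support constraint surviving the limit. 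This identifies $\prod_p K_p$ as the $\mathbb{Z}$-adic completion of $K$. Finally, the explicit Cauchy sequence from (c) exhibits the canonical embedding $G_2 \hookrightarrow \prod_p K_p$ from (b) as factoring through this completion, so the three items are consistent and together furnish the corollary.
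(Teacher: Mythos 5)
Your proof is correct and takes the only natural route: items (b) and (c) are read off directly from Claim~\ref{i2}$(c)(\cdot_2)$ and $(\cdot_5)$(iii)--(v), and item (a) is the standard computation $\varprojlim_n \bigoplus_p K_p/nK_p \cong \prod_p K_p$ for a direct sum of finite $p$-groups. The paper states the corollary without an explicit proof, treating it as an immediate consequence of Claim~\ref{i2}, so your argument is exactly the filled-in version of what the paper leaves implicit.
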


	\begin{proof}[Proof of Theorem~\ref{main_th6}] By Corollary \ref{hammer_corollary}.
	
	\end{proof}

\end{document}